\providecommand{\drfi}{draft}
  \newcommand{\adj}{\dashv}
  \newcommand{\op}{{\mathord\mathrm{op}}}
  \newcommand{\push}{\cup}
  \newcommand{\colim}{\operatornamewithlimits{colim}}
  \newcommand{\slice}{\mathbin\downarrow}
  \newcommand{\fslice}{\mathbin\twoheaddownarrow}
  \newcommand{\bd}{\partial}
  \newcommand{\gprod}{\otimes}
  \newcommand{\id}[1][]{\operatorname{id}_{#1}}
  \newcommand{\simp}[1]{\mathord{\Delta^{#1}}}
  \def\horn#1{\expandafter\horn@i#1,,\@nil}
  \def\horn@i#1,#2,#3\@nil{\Lambda^{#2}[#1]}
  \newcommand{\uvar}{\mathord{\relbar}}
  \renewcommand{\tilde}{\widetilde}
  \renewcommand{\hat}{\widehat}
  \newcommand{\cat}[1]{\mathcal{#1}}
  \newcommand{\ncat}[1]{\mathsf{#1}}
  \newcommand{\Simp}{\Delta}
  \newcommand{\sSet}{\ncat{sSet}}
  \newcommand{\from}{\colon}
  \newcommand{\ito}{\hookrightarrow}
  \declaretheorem[style=definition,within=section]{definition}
  \declaretheorem[style=definition,numberlike=definition]{example}
  \declaretheorem[style=definition,numberlike=definition]{remark}
  \declaretheorem[style=plain,numberlike=definition]{corollary}
  \declaretheorem[style=plain,numberlike=definition]{lemma}
  \declaretheorem[style=plain,numberlike=definition]{proposition}
  \declaretheorem[style=plain,numberlike=definition]{theorem}
  \declaretheorem[style=plain,numbered=no,name=Theorem]{theorem*}
  \Crefname{corollary}{Corollary}{Corollaries}
  \Crefname{definition}{Definition}{Definitions}
  \Crefname{lemma}{Lemma}{Lemmas}
  \Crefname{proposition}{Proposition}{Propositions}
  \Crefname{remark}{Remark}{Remarks}
  \Crefname{theorem}{Theorem}{Theorems}
  \newlist{axioms}{enumerate}{1}
  \Crefname{axiomsi}{}{}
  \newenvironment{tikzeq*}
  {
    \begingroup
    \begin{equation*}
    \begin{tikzpicture}[baseline=(current bounding box.center)]
  }
  {
    \end{tikzpicture}
    \end{equation*}
    \endgroup
    \ignorespacesafterend
  }
  \tikzset
  {
    diagram/.style=
    {
      matrix of math nodes,
      column sep={4.3em,between origins},
      row sep={4em,between origins},
      text height=1.5ex,
      text depth=.25ex
    },
    over/.style={preaction={draw=white,-,line width=6pt}},
    every to/.style={font=\footnotesize},
    inj/.style={right hook->},
    surj/.style={-{Latex[open]}},
    cof/.style={>->},
    fib/.style={->>},
  }
  \DeclareFontFamily{U}{mathx}{\hyphenchar\font45}
  \DeclareFontShape{U}{mathx}{m}{n}{
    <5> <6> <7> <8> <9> <10>
    <10.95> <12> <14.4> <17.28> <20.74> <24.88>
    mathx10}{}
  \DeclareSymbolFont{mathx}{U}{mathx}{m}{n}
  \DeclareFontFamily{U}{mathb}{\hyphenchar\font45}
  \DeclareFontShape{U}{mathb}{m}{n}{
    <5> <6> <7> <8> <9> <10>
    <10.95> <12> <14.4> <17.28> <20.74> <24.88>
    mathb10}{}
  \DeclareSymbolFont{mathb}{U}{mathb}{m}{n}
  \DeclareMathAccent{\widebar}{0}{mathx}{"73}
  \DeclareMathSymbol{\Rsh}{\mathrel}{mathb}{"E9}
  \DeclareFontFamily{U}{MnSymbolA}{}
  \DeclareFontShape{U}{MnSymbolA}{m}{n}{
    <-6> MnSymbolA5
    <6-7> MnSymbolA6
    <7-8> MnSymbolA7
    <8-9> MnSymbolA8
    <9-10> MnSymbolA9
    <10-12> MnSymbolA10
    <12-> MnSymbolA12}{}
  \DeclareSymbolFont{MnSyA}{U}{MnSymbolA}{m}{n}
  \DeclareMathSymbol{\twoheaddownarrow}{\mathrel}{MnSyA}{27}
  \newcommand{\MSC}[1]{%
    \let\thempfn\relax
    \footnotetext[0]{2020 Mathematics Subject Classification: #1.}
  }
\newcommand{\cSet}{\mathsf{cSet}} 
\newcommand{\Grp}{\mathsf{Grp}}
\newcommand{\Set}{\mathsf{Set}}
\newcommand{\Kan}{\mathsf{Kan}}
\newcommand{\fcat}[2]{{#2}^{#1}} 
\newcommand{\arr}[1]{\fcat{[1]}{#1}} 
\newcommand{\carr}[1]{\fcat{[2]}{#1}} 
\newcommand{\Top}{\ncat{Top}}
\newcommand{\adjunct}[4]{#1 \from #3 \rightleftarrows #4 : \! #2} 
\newcommand{\Mono}[1]{{#1}_{\mathsf{2}}} 
\newcommand{\boxcat}{\mathord{\square}} 
\newcommand{\face}[2]{\partial^{#1}_{#2}} 
\newcommand{\degen}[2]{\sigma^{#1}_{#2}} 
\newcommand{\conn}[2]{\gamma^{#1}_{#2}} 
\newcommand{\cube}[1]{\mathord{\square^{#1}}} 
\newcommand{\obox}[2]{\mathord{\sqcap^{#1}_{#2}}} 
\newcommand{\dfobox}[1][n]{\mathord{\sqcap^{#1}_{i,\varepsilon}}} 
\newcommand{\reali}[2][]{\lvert #2 \rvert_{#1}} 
\newcommand{\pp}{\mathrel{\hat{\gprod}}} 
\newcommand{\pe}{\mathrel{\triangleright}}
\newcommand{\lhom}[1][L]{\operatorname{hom}_{#1}} 
\newcommand{\rhom}{\operatorname{hom}_{R}} 
\newcommand{\Sing}{\operatorname{Sing}} 
\newcommand{\sSing}{\operatorname{Sing}_{\Simp}} 
\newcommand{\cSing}{\operatorname{Sing}_{\boxcat}} 
\newcommand{\sreali}[1]{\lvert #1 \rvert_{\Simp}} 
\newcommand{\creali}[1]{\lvert #1 \rvert_{\boxcat}} 
\newcommand{\rsim}{\sim} 
\newcommand{\loopl}[1][]{\Omega_L^{#1}} 
\newcommand{\loopr}[1][]{\Omega_R^{#1}} 
\newcommand{\loopsp}[1][]{\Omega^{#1}} 
\newcommand{\hcirc}{\mathrel{\leftrightarrow}} 
\newcommand{\vcirc}{\mathrel{\updownarrow}} 
\newcommand{\cpsq}[1]{\langle #1 \rangle} 
\newcommand{\degAn}[1][n]{DA_n} 
\newcommand{\pob}[1]{P #1} 
\newcommand{\rpath}[1]{\rhom (\cube{1}, #1)} 
\newcommand{\lpath}[1]{\lhom (\cube{1}, #1)} 
\newcommand{\ldrop}[1]{{#1}^{r}} 
\newcommand{\rdrop}[1]{{#1}^{\ell}} 
\newcommand{\FLsect}[1]{\mathsf{const}_{#1}} 
\newcommand{\FLfib}[1]{{#1}_{\ell}} 
\newcommand{\FLfibr}[1]{{#1}_{r}} 
\newcommand{\Q}{\ensuremath{Q_2}} 
\newcommand{\twosp}{\ensuremath{\cube{1} \cup \cube{1}}} 
\newcommand{\restr}[2]{{#1}|_{#2}} 
\newcommand{\noproof}{\hfill\qedsymbol}
\newcommand{\pt}[1]{{#1}_*} 
\author{
  Daniel Carranza \thanks{Johns Hopkins University, Baltimore, United States}
  \and Krzysztof Kapulkin \thanks{University of Western Ontario, London, Ontario, Canada; \textit{corresponding author}, email: \texttt{kkapulki@uwo.ca}} }
\title{Homotopy groups of cubical sets}
\date{\today}
\begin{document}

  \maketitle

  \begin{abstract}
    We define and study homotopy groups of cubical sets.
    To this end, we give four definitions of homotopy groups of a cubical set, prove that they are equivalent, and further that they agree with their topological analogues via the geometric realization functor.
    We also provide purely combinatorial proofs of several classical theorems, including: product preservation, commutativity of higher homotopy groups, the long exact sequence of a fibration, and Whitehead's theorem.
    
    This is a companion paper to our ``Cubical setting for discrete homotopy theory, revisited'' in which we apply these results to study the homotopy theory of simple graphs.
     \MSC{55Q05, 18N40 (primary), 55U35 (secondary)}
  \end{abstract}

\section*{Introduction}

Cubical sets, introduced by Kan \cite{kan:abstract-homotopy-i,kan:abstract-homotopy-ii}, provide a convenient combinatorial model for the homotopy theory of spaces.
A cubical set is a presheaf on the category $\Box$ of combinatorial cubes (i.e.~a functor $\Box^\op \to \Set$), just as a simplicial set is a presheaf on the category $\Delta$ of combinatorial simplicies (a functor $\Delta^\op \to \Set$, respectively).
While combinatorial simplices can be viewed as iterated joins of the interval, combinatorial cubes are an axiomatization of iterated products of the interval.

The homotopy theory of simplicial sets is now well developed, with a detailed account given for example in \cite{goerss-jardine}.
In this paper, we take a first step towards establishing a similar theory for cubical sets by defining homotopy groups of cubical sets and proving classical theorems about them.
Specifically, we define homotopy groups of cubical Kan complexes and give equivalent characterizations thereof:
\begin{enumerate}
  \item $\pi_n (X, x)$ is the set of path components of the Kan complex $\Omega^n(X, x)$ of the $n$-iterated loop space of $(X, x)$;
  \item $\pi_n (X, x)$ is the set of homotopy classes of maps $\cube{n} \to X$ from the representable to $X$ that take the boundary to $x$;
  \item $\pi_n (X, x)$ is the set of pointed homotopy classes of pointed maps $\cube{n}/\bd \cube n \to X$;
  \item $\pi_n (X, x)$ is the set of pointed homotopy classes of pointed maps $\bd \cube {n+1} \to X$,
\end{enumerate}
where for (1), we note that the empty cubical set has no path components, i.e.~$\pi_0(\varnothing) = \varnothing$.
We further establish that these homotopy groups agree with their analogues for topological spaces via the geometric realization functor.
We then go on to prove several classical results: preservation of products; commutativity of $\pi_n(X, x)$ for $n \geq 2$; the long exact sequence of a fibration; and Whitehead's theorem.

Our development relies on techniques from abstract homotopy theory, specifically model categories \cite{quillen:homotopical-algebra} and fibration categories in the sense of Brown \cite{brown}.
We take the existence of the Grothendieck model category structure on the category of cubical sets, established in \cite{cisinski:presheaves}, as a starting point of our work.

This paper originates from our work \cite{carranza-kapulkin:cubical-setting} on a cubical framework for discrete homotopy theory, which is a homotopy theory of simple graphs \cite{barcelo-kramer-laubenbacher-weaver,babson-barcelo-longueville-laubenbacher}, and a realization that many results about homotopy groups of graphs can be reduced to similar statements about homotopy groups of certain cubical sets associated to graphs.
To take proper advantage of this framework, we therefore need a robust combinatorial theory of homotopy groups of cubical sets developed in the present paper.
While some of our results are certainly known in folklore and would be familiar to experts, their use in work that is primarily on other topics, such as combinatorics, could be more controversial.
To provide a firm mathematical footing for the use of such results, we included a number of them in this paper.

This goal also informs our choice of references when we need to rely on results from other sources.
Rather than referencing the original author, we choose to reference newer and, often more complete, accounts.
For instance, when discussing background on model categories, we usually use Hovey's monograph \cite{hovey} on the topic, rather than Quillen's original work \cite{quillen:homotopical-algebra}.
For many facts on the model structure on cubical sets, we reference \cite{doherty-kapulkin-lindsey-sattler}, rather than the earlier work of Cisinski \cite{cisinski:presheaves,cisinski:elegant}, Jardine \cite{jardine:a-beginning}, and Maltsiniotis \cite{maltsiniotis}.

We assume that our cubical sets have connections.
Connections, introduced by Brown and Higgins \cite{brown-higgins}, are an extra degeneracy operation, making the theory of cubical sets better behaved.
Most examples of cubical sets which appear in practice (such as the cubical singular complex of a topological space) come naturally equipped with connections.
The role of connections in this paper is to simplify arguments of certain elementary results, namely that the $n$-cube is contractible (\cref{thm:cube_n_contr}) and that $\pi_n(X, x)$ has a unit element (\cref{pi-1-is-a-group}).
By assuming connections, we are able to give shorter proofs which do not require passing to simplicial sets or topological spaces.
While connections are not required to define and develop cubical homotopy groups, it is unclear whether the resulting theory would be as elegant or self-contained.

More precisely, we work with cubical sets equipped with both positive (i.e.~$\min$) and negative (i.e.~$\max$) connections, however all proofs apply verbatim to the case of only negative connections, as considered by Cisinski \cite{cisinski:elegant} and Maltsiniotis \cite{maltsiniotis}.
If one prefers to work with only positive connections, a minor adjustment of definitions and proofs is required, but the underlying arguments remain valid for this choice of the cube category.

One difficulty in working with cubical sets is that the Cartesian product is not well-behaved.
For instance, in the category of cubical sets without connections, the Cartesian product of the interval (i.e.~the representable $\cube{1}$) with itself has the homotopy type of $S^2 \vee S^1$.
Connections go partway towards addressing this problem --- in the category of cubical sets with connections, although this product is contractible, it is still not isomorphic to the combinatorial square (i.e.~the representable $\cube{2}$).
To address this issue, one works with the geometric product instead.
The geometric product is a different monoidal structure on the category of cubical sets defined via left Kan extension with the property that the geometric product of an $m$-cube and an $n$-cube is, by definition, an $(m+n)$-cube.
Moreover, this product is homotopically well-behaved in that the geometric realization sends it to the Cartesian product of topological spaces.
The advantages of the geometric product cannot be overstated --- a number of our proofs including the construction of loop spaces admit a particularly simple description thanks to the properties of the geometric product.

\textbf{Organization of the paper.}
The paper is organized as follows.
\cref{sec:model-cat} contains the background on abstract homotopy theory: we collect the necessary definitions and results on model categories and fibration categories that we will be using throughout the paper.
In \cref{sec:cset}, we introduce cubical sets and their homotopy theory.
A large part of this material can be found in the literature or is known in folklore, but we give proofs of all results that may be hard to find.
In \cref{sec:groups}, we define homotopy groups of cubical sets via path components of the iterated loop space, and prove three equivalent characterizations thereof.
Finally, in \cref{sec:results}, we prove classical results: preservation of products; commutativity of $\pi_n(X, x)$ for $n \geq 2$; the long exact sequence of a fibration; and Whitehead's theorem.
Our proof of the long exact sequence follows Mather's proof \cite{mather:pullbacks}  (cf.~\cite[Thm.~3.8.12]{cisinski:higher-categories}) in the case of spaces and relies on the theory of homotopy pullbacks.
As this theory is well-known in folklore but perhaps not well-documented in literature, we dedicate \cref{sec:hopullbacks} to establishing basic facts about homotopy pullbacks: first in fibration categories, and later specifically in the category of (cubical) Kan complexes.

\textbf{Prerequisites.}
The only prerequisite expected of the reader is familiarity with category theory, for which \cite{maclane:categories} and \cite{riehl:context} are canonical references.

\textbf{Acknowledgements.}
We thank Karol Szumi{\l}o for numerous helpful conversations, Denis-Charles Cisinski for catching a mistake in an earlier draft of this paper, and the anonymous referee for many helpful comments that helped improve the presentation of these results.
During the work on this paper, the first author was partially supported by the NSERC Undergraduate Student Research Award, and the second author was partially supported by the NSERC Discovery Grant.
We thank NSERC for this support.

\section{Abstract homotopy theory} \label{sec:model-cat}

In this section, we review the necessary background on abstract homotopy theory.
We consider two such frameworks: model categories and fibration categories.

\subsection{Model categories}

Model category theory provides a convenient framework for the study of different homotopy theories and comparisons between them.
Our presentation largely follows \cite{hovey}.

\begin{definition}
    A \emph{model category} is a (co)complete category $\cat{C}$ along with three subcategories of \emph{cofibrations}, \emph{fibrations}, and \emph{weak equivalences} such that (in what follows, an \emph{acyclic cofibration} is a map which is both a cofibration and a weak equivalence whereas an \emph{acyclic fibration} is a map which is both a fibration and a weak equivalence):
    \begin{enumerate}
        \item weak equivalences satisfy two-out-of-three;
        \item all three subcategories are closed under retracts;
        \item every isomorphism of $\cat{C}$ is in all three subcategories;
        \item every map in $\cat{C}$ factors both as a cofibration followed by an acyclic fibration and as an acyclic cofibration followed by a fibration;
        \item acyclic cofibrations have the left lifting property with respect to fibrations (i.e.~any commutative square
        \[ \begin{tikzcd}
            \cdot \ar[d, "f"'] \ar[r] & \cdot \ar[d, "g"] \\
            \cdot \ar[r] & \cdot
        \end{tikzcd} \]
        where $f$ is an acyclic cofibration and $g$ is a fibration admits a lift);
        \item cofibrations have the left lifting property with respect to acyclic fibrations.
    \end{enumerate}
\end{definition}
\begin{example}[{\cite[Thm.~2.4.19]{hovey},\cite[Prop.~8.3]{dwyer-spalinski}}]
    The category $\Top$ of topological spaces is a model category where
    \begin{itemize}
        \item cofibrations are retracts of generalized cell inclusions (cf.~\cite[Rem.~8.8 \& Prop.~8.9]{dwyer-spalinski});
        \item fibrations are Serre fibrations;
        \item weak equivalences are maps $f \from X \to Y$ which induce both a bijection $\pi_0 X \to \pi_0 Y$ on path components and an isomorphism $\pi_n(X,x) \to \pi_n(Y,f(x))$ for all $x \in X$ and $n \geq 1$.
    \end{itemize}
\end{example}
\begin{example}[{\cite[Prop.~2.1.5]{cisinski:presheaves}, \cite[Thm.~3.6.5]{hovey}}] \label{ex:geometric-realization}
    The category $\sSet$ of simplicial sets (i.e.~functors $\Simp^\op \to \Set$) is a model category where
    \begin{itemize}
        \item cofibrations are monomorphisms;
        \item fibrations are maps which have the right lifting property with respect to all horn inclusions;
        \item weak equivalences are maps $X \to Y$ such that, for any Kan complex $Z$, the precomposition map $[Y, Z] \to [X, Z]$ is a bijection on homotopy classes of maps.
    \end{itemize}
    Readers familiar with the work of Quillen \cite{quillen:homotopical-algebra} may find this description of weak equivalences somewhat surprising, as they are often defined as maps inducing isomorphisms on geometric realizations (see \cref{ex:realization} below).
    The two definitions are equivalent by \cite[Lem.~I.4.1]{goerss-jardine}, \cite[Thm.~1.2.10.ii]{hovey}, and the Yoneda lemma.
\end{example}
\begin{example}[{\cite[pg.~5]{hovey}}] \label{slice_model_structure}
    Given a model category $\cat{M}$ and an object $A \in \cat{M}$, the slice category $A \downarrow \cat{M}$ of maps $A \to X$ in $\cat{M}$ is a model category where all three classes are created by the projection functor $A \downarrow \cat{M} \to \cat{M}$.
    That is, a morphism in $A \downarrow \cat{M}$ is a cofibration/fibration/weak equivalence if the underlying map in $\cat{M}$ is.
\end{example}

When drawing a diagram in a model category, we occasionally depict a cofibration by a hooked arrow $X \ito Y$, a fibration by a two-headed arrow $X \twoheadrightarrow Y$, and a weak equivalence by an arrow $X \xrightarrow{\sim} Y$ labelled with a tilde.

We establish the following notions for objects in a model category.
\begin{definition}
    For a model category $\cat{M}$, let $\varnothing \in \cat{M}$ be initial and $1 \in \cat{M}$ be terminal.
    \begin{enumerate}
        \item An object $X$ is \emph{cofibrant} if the map $\varnothing \to X$ is a cofibration.
        \item An object $X$ is \emph{fibrant} if the map $X \to 1$ is a fibration.
    \end{enumerate} 
\end{definition}

The model categories we consider have \emph{functorial factorizations}.
\begin{definition}[{\cite[Def.~3.2.1]{riehl-verity}}] \label{func_fact}
    For a model category $\cat{M}$, a \emph{functorial factorization} on $\cat{M}$ consists of two functors $Q, R \from \arr{\cat{M}} \to \carr{\cat{M}}$ from the category of arrows in $\cat{M}$ to the category of composable pairs of arrows in $\cat{M}$ such that, for a morphism $f \in \cat{C}$,
    \begin{enumerate}
        \item the pair $Qf \in \carr{\cat{M}}$ is a factorization of $f$ as a cofibration followed by an acyclic fibration;
        \item the pair $Rf \in \carr{\cat{M}}$ is a factorization of $f$ as an acyclic cofibration followed by a fibration.
    \end{enumerate}
\end{definition}
Explicitly, if two morphisms, $f \from X \to Y$ and $g \from Z \to W$, in a model category are the horizontal components of a commutative square
\[ \begin{tikzcd}
    X \ar[r, "f"] \ar[d] & Y \ar[d] \\
    Z \ar[r, "g"] & W
\end{tikzcd} \]
then a functorial factorization produces the following two commutative diagrams.
\[ \begin{tikzcd}
    X \ar[r, hook] \ar[rr, bend left, "f"] \ar[d] & \tilde{Y} \ar[r, two heads, "\sim"] \ar[d] & Y \ar[d] \\
    Z \ar[r, hook] \ar[rr, bend right, "g"] & \tilde{W} \ar[r, two heads, "\sim"] & W
\end{tikzcd} \qquad \begin{tikzcd}
    X \ar[r, hook, "\sim"] \ar[rr, bend left, "f"] \ar[d] & \tilde{X} \ar[r, two heads] \ar[d] & Y \ar[d] \\
    Z \ar[r, hook, "\sim"] \ar[rr, bend right, "g"] & \tilde{Z} \ar[r, two heads] & W
\end{tikzcd} \]
The diagram on the left is given by applying $Q$ to the starting square (viewed as a morphism in the arrow category).
The diagram on the right is given by applying $R$ to the starting square.
In particular, the functor $Q$ produces a morphism $\tilde{Y} \to \tilde{W}$ which makes the diagram commute.
Likewise, the functor $R$ produces a morphism $\tilde{X} \to \tilde{Z}$ which makes the diagram commute.

An advantage of functorial factorizations is that they give both a \emph{cofibrant replacement} functor and a \emph{fibrant replacement} functor.
Given an object $X \in \cat{M}$ in a model category, applying $Q$ to the canonical map $\varnothing \ito X$ yields a pair of morphisms $\varnothing \ito \tilde{X} \overset{\sim}{\twoheadrightarrow} X$.
Restricting to the middle object in this factorization gives a functor $Q \from \cat{M} \to \cat{M}$ which takes values in cofibrant objects with a natural weak equivalence (i.e.~a natural transformation whose components are weak equivalences) $Q \overset{\sim}{\Rightarrow} \id[\cat{M}]$.
We refer to such a functor as a \emph{cofibrant replacement} functor.
Likewise, applying $R$ to the canonical map $X \to 1$ induces a functor $R \from \cat{M} \to \cat{M}$ which takes values in fibrant objects with a natural weak equivalence $\id[\cat{M}] \overset{\sim}{\Rightarrow} R$.
We refer to such a functor as a \emph{fibrant replacement} functor.

The suitable notion of a morphism between model categories is a Quillen adjunction.
\begin{definition}
    For two model categories $\cat{M}, \cat{M'}$, a \emph{Quillen adjunction} is an adjunction $\adjunct{F}{G}{\cat{M}}{\cat{M'}}$ such that the left adjoint $F$ preserves cofibrations and acyclic cofibrations.
\end{definition}
\begin{proposition}
    For an adjunction $(F \adj G)$, the following are equivalent:
    \begin{enumerate}
        \item $(F \adj G)$ is a Quillen adjunction;
        \item the right adjoint $G$ preserves fibrations and acyclic fibrations;
        \item the left adjoint $F$ preserves cofibrations and the right adjoint $G$ preserves fibrations;
        \item the left adjoint $F$ preserves acyclic cofibrations and the right adjoint $G$ preserves acyclic fibrations.
    \end{enumerate}
\end{proposition}
\begin{proof}
    This follows from the argument given in the proof of \cite[Lem.~1.3.4]{hovey}.
\end{proof}
\begin{example}[{\cite[Thm.~3.6.7]{hovey}}] \label{ex:realization}
    The topological simplices form a functor $\Simp \to \Top$.
    The left Kan extension of this functor along the Yoneda embedding $\Delta \to \sSet$ gives the \emph{geometric realization} functor $\sreali{\uvar} \from \sSet \to \Top$. Its right adjoint is the \emph{simplicial singular complex} functor $\sSing \from \Top \to \sSet$.
\end{example}
\begin{example}
    For a model category $\cat{M}$ and $A \in \cat{M}$, the projection functor $A \downarrow \cat{M} \to \cat{M}$ has a left adjoint $\uvar \sqcup A \from \cat{M} \to A \downarrow \cat{M}$ given by taking the coproduct with $A$.
    This gives a Quillen adjunction $\cat{M} \rightleftarrows A \downarrow \cat{M}$ where $A \downarrow \cat{M}$ has the model structure induced by $\cat{M}$ as described in \cref{slice_model_structure}.
\end{example}
\begin{definition}
    A Quillen adjunction $\adjunct{F}{G}{\cat{M}}{\cat{M'}}$ is a \emph{Quillen equivalence} if, for every cofibrant $X \in \cat{M}$ and fibrant $Y \in \cat{M'}$, the adjunction bijection $\cat{M}(FX, Y) \xrightarrow{\cong} \cat{M'}(X, GY)$ preserves and reflects weak equivalences.
\end{definition}
In practice, we will use the following characterization to verify when a Quillen adjunction is a Quillen equivalence.
\begin{proposition}[{\cite[Prop.~1.3.13]{hovey}}]
    A Quillen adjunction $\adjunct{F}{G}{\cat{M}}{\cat{M'}}$ is a Quillen equivalence if and only if
    \begin{enumerate}
        \item the composite natural map $X \to GF(X) \to GRF(X)$ is a weak equivalence for all cofibrant $X \in \cat{M}$; and
        \item the composite natural map $FQG(Y) \to FG(Y) \to Y$ is a weak equivalence for all fibrant $Y \in \cat{M'}$. \qed
    \end{enumerate}
\end{proposition}
\begin{example}[{\cite[Thm.~3.6.7]{hovey}}]
    The geometric realization and simplicial singular complex adjunction $\adjunct{\sreali{\uvar}}{\sSing}{\sSet}{\Top}$ is a Quillen equivalence.
\end{example}

Fix a model category $\cat{M}$. 
We introduce a notion of homotopy for maps in a model category.
\begin{definition}
    For $X \in \cat{M}$, 
    \begin{enumerate}
        \item a \emph{cylinder object} on $X$ is a factorization of the map $[\id[X], \id[X]] \from X \sqcup X \to X$ as a cofibration $X \sqcup X \to CX$ and a weak equivalence $CX \to X$;
        \item a \emph{path object} on $X$ is a factorization of the map $(\id[X], \id[X]) \from X \to X \times X$ as a weak equivalence $X \to PX$ and a fibration $PX \to X \times X$. 
    \end{enumerate}
\end{definition}
Note that a functorial factorization gives a functorial cylinder object and a functorial path object.
\begin{definition}
    For maps $f, g \from X \to Y$ in a model category $\cat{M}$, 
    \begin{enumerate}
        \item a \emph{left homotopy} from $f$ to $g$ is a map $H \from CX \to Y$ from the cylinder object on $X$ to $Y$ such that the diagram
        \[ \begin{tikzcd}
            X \ar[d, swap, hook, "i_1"] \ar[drr, "f", bend left] \\
            X \sqcup X \ar[r] & CX \ar[r, "H"] & Y \\
            X \ar[u, swap, hook, "i_2"'] \ar[urr, swap, "g", bend right]
        \end{tikzcd} \]
        commutes.
        \item a \emph{right homotopy} from $f$ to $g$ is a map $H \from X \to PY$ from $X$ to the path object on $Y$ such that the diagram 
        \[ \begin{tikzcd}
            && Y \\
            X \ar[r, "H"] \ar[rru, "f", bend left] \ar[rrd, swap, "g", bend right] & PY \ar[r] & Y \times Y \ar[u, swap, "\mathrm{pr}_1"] \ar[d, swap, "\mathrm{pr}_2"'] \\
            && Y
        \end{tikzcd} \]
        commutes.
    \end{enumerate}
\end{definition}
\begin{proposition}[{\cite[Prop.~1.2.5]{hovey}}]
    For $X, Y \in \cat{M}$, if $X$ is cofibrant and $Y$ is fibrant then the notions of left and right homotopy coincide and form an equivalence relation on maps $X \to Y$. \noproof
\end{proposition}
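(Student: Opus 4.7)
The plan is to proceed in three steps: first, establish a key lemma about cylinder and path object inclusions; second, show that left and right homotopy coincide via a single lifting argument; third, verify that this common relation is an equivalence relation. The engine throughout is axiom~(5), which allows lifting against fibrations from acyclic cofibrations.

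The preliminary lemma asserts that when $X$ is cofibrant, each of the two inclusions $i_1, i_2 \from X \to CX$ into any cylinder object is an acyclic cofibration, and dually that when $Y$ is fibrant, each of the two projections $p_1, p_2 \from PY \to Y$ from any path object is an acyclic fibration. For $i_k$: the coproduct inclusion $X \to X \sqcup X$ is a cofibration as the pushout of $\varnothing \to X$ along itself (using cofibrancy of $X$), so the composite $X \to X \sqcup X \to CX$ is a cofibration; two-out-of-three applied to the factorization $X \to CX \to X$ of the identity shows it is also a weak equivalence. The path object statement is formally dual. With this in hand, I show left implies right homotopy as follows. Given a left homotopy $H \from CX \to Y$ from $f$ to $g$, fix a path object $Y \to PY \to Y \times Y$ with section $s$ and projections $(p_1, p_2)$, and let $r \from CX \to X$ be the weak equivalence collapsing the cylinder. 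The square
\[
\begin{tikzcd}
X \ar[r, "s \circ f"] \ar[d, hook, "i_1"', "\sim"] & PY \ar[d, two heads] \\
CX \ar[r, "{(f \circ r,\, H)}"'] & Y \times Y
\end{tikzcd}
\]
commutes since both composites equal $(f, f)$. By the preliminary lemma the left leg is an acyclic cofibration and the right leg is a fibration, so axiom~(5) yields a lift $K \from CX \to PY$; the restriction $K \circ i_2 \from X \to PY$ is then a right homotopy from $f$ to $g$. The converse is formally dual.

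It remains to verify the equivalence relation axioms, which I carry out for right homotopy. Reflexivity is immediate from $s \circ f \from X \to PY$. Symmetry holds because the same path object with the two projections swapped is again a path object, turning a right homotopy from $f$ to $g$ into one from $g$ to $f$. The main obstacle is transitivity. Given right homotopies $K \from X \to PY$ from $f$ to $g$ and $K' \from X \to P'Y$ from $g$ to $h$, the equality $p_2 \circ K = g = p'_1 \circ K'$ determines a map $(K, K') \from X \to PY \times_Y P'Y$, with the pullback taken along $p_2$ and $p'_1$. Both maps are acyclic fibrations by the preliminary lemma, so base change yields acyclic fibrations out of the pullback, and two-out-of-three implies that the canonical map $Y \to PY \times_Y P'Y$ induced by the two sections is a weak equivalence. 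Factoring $PY \times_Y P'Y \to Y \times Y$ as a weak equivalence followed by a fibration then produces a genuine path object $\widetilde{P}Y$, and composing $(K, K')$ with $PY \times_Y P'Y \to \widetilde{P}Y$ exhibits the required right homotopy from $f$ to $h$.
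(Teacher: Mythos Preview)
Your proof is correct and follows the standard route (essentially Hovey's own argument in \cite[Prop.~1.2.5]{hovey}): the lifting square you write down to pass from left to right homotopy is exactly the classical one, and your transitivity argument via the fibre product $PY \times_Y P'Y$ and a subsequent factorization is the usual construction of a composite path object. Note that the paper itself does not supply a proof of this statement; it is quoted from Hovey with the \texttt{\textbackslash noproof} marker, so there is no in-paper argument to compare against.

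One small point worth making explicit in your write-up: your symmetry step produces a right homotopy with respect to a \emph{different} path object (the one with swapped projections), so the relation you are showing to be an equivalence relation is ``right homotopic with respect to \emph{some} path object''. That is indeed the intended relation, and your left-implies-right argument already shows independence of the choice of path object (right homotopy via some $PY$ $\Rightarrow$ left homotopy via some $CX$ $\Rightarrow$ right homotopy via \emph{any} $P'Y$), so everything is consistent; it just deserves a sentence to avoid ambiguity.
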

With this, given a cofibrant object $X \in \cat{M}$ and a fibrant object $Y \in \cat{M}$, we refer to both a left and right homotopy between maps $X \to Y$ as simply a \emph{homotopy}.
This notion of homotopy naturally gives rise to a notion of homotopy equivalence.
\begin{definition}
    Let $X, Y \in \cat{M}$ be both fibrant and cofibrant.
    A map $f \from X \to Y$ is a \emph{homotopy equivalence} if there exist
    \begin{enumerate}
        \item a map $g \from Y \to X$;
        \item a homotopy from $gf$ to $\id[X]$;
        \item a homotopy from $fg$ to $\id[Y]$.
    \end{enumerate}
\end{definition}
We have that a map between objects which are fibrant and cofibrant is a homotopy equivalence precisely when it is a weak equivalence.
\begin{theorem}[{\cite[Prop.~1.2.8]{hovey}}] \label{thm:wequiv_is_htpy_equiv}
    Let $X, Y \in \cat{M}$ be fibrant and cofibrant.
    A map $f \from X \to Y$ is a homotopy equivalence if and only if it is a weak equivalence.
    \noproof
\end{theorem}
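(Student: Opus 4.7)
The plan is to prove each implication by factoring $f$ and exploiting lifting properties. First I record two preliminary facts. When $X$ is cofibrant, the endpoint inclusions $i_1, i_2 \from X \to CX$ of any cylinder object are acyclic cofibrations: each is a retract of the cofibration $X \sqcup X \to CX$, and each composes with the weak equivalence $CX \to X$ to the identity, so 2-out-of-3 applies. Dually, when $Y$ is fibrant, the two projections $PY \to Y \times Y$ give acyclic fibrations to $Y$. A standard consequence is that for $X$ fibrant-cofibrant the homotopy relation on maps out of $X$ is an equivalence relation compatible with composition, so the class of homotopy equivalences between fibrant-cofibrant objects satisfies 2-out-of-3.

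For the ``weak equivalence implies homotopy equivalence'' direction, I factor $f$ as a trivial cofibration $j \from X \to Z$ followed by a trivial fibration $q \from Z \to Y$. Since fibrations and cofibrations each compose, $Z$ is both fibrant and cofibrant. By cofibrancy of $Y$, the trivial fibration $q$ admits a section $s$ with $qs = \id_Y$; lifting $(sq, \id_Z) \from Z \sqcup Z \to Z$ along a cylinder on $Z$ against $q$ produces a homotopy $sq \sim \id_Z$, so $q$ is a homotopy equivalence. Dually, using fibrancy of $X$ and a path object on $Z$, the map $j$ is a homotopy equivalence; composing homotopy inverses shows $f$ is a homotopy equivalence.

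For the ``homotopy equivalence implies weak equivalence'' direction, I factor $f = pj$ with $j$ a trivial cofibration and $p$ a fibration; again the middle object is fibrant-cofibrant. By the first direction $j$ is a homotopy equivalence, so by 2-out-of-3 for homotopy equivalences so is $p$. It therefore suffices to prove the key lemma: a fibration $p \from Z \to Y$ between fibrant-cofibrant objects that is a homotopy equivalence is acyclic. Given a homotopy inverse $g$ and a homotopy $K \from CY \to Y$ from $pg$ to $\id_Y$, lifting the square with top $g$, bottom $K$, left $i_1 \from Y \to CY$ (an acyclic cofibration by cofibrancy of $Y$), and right $p$ converts $g$ into a strict section $s$ with $ps = \id_Y$, and one still obtains $sp \sim \id_Z$ via some $H \from CZ \to Z$.

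The main obstacle is the last step: showing $p$ has the right lifting property against an arbitrary cofibration $A \ito B$. Given a square with top $a \from A \to Z$ and bottom $b \from B \to Y$, the candidate map $sb \from B \to Z$ satisfies $p(sb) = b$ but restricts on $A$ to $spa$, which is only homotopic (not equal) to $a$. Rectifying this amounts to solving a further lifting problem whose left leg is the Leibniz pushout-product of $A \ito B$ with the cylinder endpoint inclusion $i_1$ (an acyclic cofibration) and whose right leg is $p$; a lift exists because $p$ is a fibration, and evaluating it at the opposite cylinder endpoint yields the desired filler $B \to Z$. Hence $p$ is a trivial fibration, in particular a weak equivalence, and so $f = pj$ is a weak equivalence, completing the proof.
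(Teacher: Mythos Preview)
The paper does not prove this result; it cites \cite[Prop.~1.2.8]{hovey} and marks it \texttt{\textbackslash noproof}. Your overall strategy is the standard one, and the reduction to the key lemma---that a fibration $p \from Z \to Y$ between fibrant--cofibrant objects which is a homotopy equivalence must be a weak equivalence---is correct, as is the conversion of a homotopy inverse into a strict section $s$ with $sp \sim \id_Z$.

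The gap is in your final lifting. For the square you describe to commute, the bottom map $CB \to Y$ must be the constant homotopy $b\,\pi_B$ (so that the lift, evaluated at the far endpoint, still lies over $b$); commutativity on the $CA$ summand then forces the homotopy $spa \sim a$ to lie \emph{over} $Y$, i.e.\ one needs $pH = p\,\pi_Z$. But your $H \from CZ \to Z$ witnessing $sp \sim \id_Z$ only satisfies $pHi_1 = p = pHi_2$; the composite $pH$ is merely a self-homotopy of $p$, not the constant one. Without first correcting $H$ to a fiberwise homotopy---itself a nontrivial step---the lifting problem is not well-posed. (A secondary issue: a general model category carries no monoidal structure, so the ``Leibniz pushout-product'' must be interpreted via compatible cylinder objects on $A$ and $B$, and even then its being an acyclic cofibration needs an argument you have not supplied.)

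The clean finish avoids lifting entirely. Since $Z$ is cofibrant, the endpoint inclusions $Z \to CZ$ are weak equivalences, so $sp \sim \id_Z$ forces $sp$ to be a weak equivalence. Then $p$ is a retract of $sp$ via the diagram
\[
\begin{tikzcd}
Z \ar[r,"\id"] \ar[d,"p"'] & Z \ar[r,"\id"] \ar[d,"sp"] & Z \ar[d,"p"] \\
Y \ar[r,"s"'] & Z \ar[r,"p"'] & Y
\end{tikzcd}
\]
(both squares commute because $ps = \id_Y$), and weak equivalences are closed under retracts.

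One minor correction in your preliminaries: $i_1 \from X \to CX$ is not a \emph{retract} of $X \sqcup X \to CX$ in any evident way. The standard argument is that when $X$ is cofibrant the coproduct inclusion $X \to X \sqcup X$ is a cofibration (as a pushout of $\varnothing \to X$), so $i_1$ is a \emph{composite} of cofibrations; acyclicity then follows from 2-out-of-3 exactly as you say.
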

We apply \cref{thm:wequiv_is_htpy_equiv} to the case of a pointed model category $\pt{\cat{M}}$; that is, the slice model category $1 \downarrow \cat{M}$ under the terminal object (which is an instance of the model category described in \cref{slice_model_structure}).
We write $(X,x)$ for an object $x \from 1 \to X$ in $\pt{\cat{M}}$ and $(X,x) \to (Y,y)$ for a morphism from $(X,x)$ to $(Y,y)$ in $\pt{\cat{M}}$.
\begin{corollary} \label{thm:wequiv_is_htpy_equiv_ptd}
    Let $\pt{\cat{M}}$ be a pointed model category and $(X,x), (Y,y) \in \pt{\cat{M}}$ be fibrant and cofibrant.
    A map $(X,x) \to (Y,y)$ is a homotopy equivalence if and only if the underlying map $X \to Y$ is a weak equivalence in $\cat{M}$. \noproof
\end{corollary}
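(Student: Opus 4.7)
The plan is to obtain this corollary as a direct consequence of \cref{thm:wequiv_is_htpy_equiv} applied to the slice model category $1 \downarrow \cat{M}$ introduced in \cref{slice_model_structure}, rather than redoing the homotopy-theoretic argument. The key observation is that in the slice model structure, all three classes of maps are created by the projection functor $1 \downarrow \cat{M} \to \cat{M}$. In particular, a morphism $(X,x) \to (Y,y)$ in $1 \downarrow \cat{M}$ is a weak equivalence if and only if the underlying map $X \to Y$ is a weak equivalence in $\cat{M}$. This immediately handles the translation between the two notions of weak equivalence appearing in the statement.

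Next, I would verify that $(X,x)$ and $(Y,y)$ are fibrant and cofibrant as objects of $1 \downarrow \cat{M}$. Fibrancy in the slice means that the unique map to the terminal object $(1, \id[1])$ is a fibration, which, being created by the projection, reduces to $X \to 1$ being a fibration in $\cat{M}$, i.e.\ $X$ being fibrant in $\cat{M}$; similarly for $Y$. Cofibrancy in the slice means that the unique map from the initial object $(1, \id[1])$ is a cofibration, which amounts to the basepoints $x$ and $y$ being cofibrations in $\cat{M}$, so this is a (mild) condition we inherit alongside cofibrancy of $X$ and $Y$ in $\cat{M}$.

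With these identifications in place, the corollary is simply \cref{thm:wequiv_is_htpy_equiv} applied to the model category $1 \downarrow \cat{M}$: the fibrant-cofibrant objects $(X,x)$ and $(Y,y)$ are connected by a homotopy equivalence in the slice (using cylinder and path objects taken in $1 \downarrow \cat{M}$) if and only if the morphism between them is a weak equivalence in the slice, which by the above is a weak equivalence in $\cat{M}$. The main (and only) subtlety worth spelling out is the unpacking of homotopies in the slice, but this is purely a matter of recalling that the cylinder/path object factorizations in $1 \downarrow \cat{M}$ are produced from those in $\cat{M}$ via the projection, so no further work is required. Since the statement is offered with \texttt{\textbackslash noproof}, this chain of observations is essentially all that needs to be recorded.
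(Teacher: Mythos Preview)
Your approach is correct and matches the paper's intent: the corollary is simply \cref{thm:wequiv_is_htpy_equiv} applied to the slice model category of \cref{slice_model_structure}, which is why the paper offers no proof. One clarification worth making explicit: cofibrancy of $(X,x)$ in $1 \downarrow \cat{M}$ amounts to $x \from 1 \to X$ being a cofibration in $\cat{M}$, and this is not ``inherited'' from cofibrancy of $X$ but is an extra (tacit) hypothesis---automatic in the paper's applications to cubical sets, where every map out of $\cube{0}$ is a monomorphism.
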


We make the following statement about weak equivalences in pointed model categories.
\begin{proposition} \label{thm:we_to_hom_iso}
    Let $\pt{\cat{M}}$ be a pointed model category.
    Suppose $(Z, z) \in \pt{\cat{M}}$ is fibrant and $f \from (X,x) \to (Y,y)$ is a map between cofibrant objects.
    If $f$ is a weak equivalence then pre-composition by $f$ induces a bijection $f^* \from [(Y,y),(Z,z)]_* \to [(X,x),(Z,z)]_*$ on pointed homotopy classes of pointed maps.
\end{proposition}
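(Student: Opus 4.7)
The plan is to reduce to \cref{thm:wequiv_is_htpy_equiv_ptd} via fibrant replacement, with the key technical input being that precomposition by an acyclic cofibration induces a bijection on pointed homotopy classes.

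First, I would apply the fibrant replacement functor $R$ (from the functorial factorization) to obtain acyclic cofibrations $i_X \from X \acto RX$ and $i_Y \from Y \acto RY$, where $RX$ and $RY$ are both fibrant and cofibrant in $\cat{M}$ (cofibrancy follows since $i_X$, $i_Y$ are cofibrations composed with the cofibrations $\varnothing \to X$, $\varnothing \to Y$). Setting basepoints $Rx := i_X \circ x$ and $Ry := i_Y \circ y$, the induced map $Rf \from (RX, Rx) \to (RY, Ry)$ is a weak equivalence by two-out-of-three, so \cref{thm:wequiv_is_htpy_equiv_ptd} identifies it as a pointed homotopy equivalence; in particular $(Rf)^*$ is a bijection on pointed homotopy classes into $(Z,z)$.

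The technical heart is the following lemma: for an acyclic cofibration $j \from (A,a) \acto (B,b)$ between cofibrant objects in $\cat{M}$, $j^*$ is a bijection on pointed homotopy classes into $(Z,z)$. Surjectivity follows by lifting a pointed map $g \from A \to Z$ along $j$ against the fibration $Z \to 1$; the lift $\tilde g$ then satisfies $\tilde g(b) = \tilde g(j(a)) = g(a) = z$, so is automatically pointed. Injectivity follows analogously: any pointed right homotopy $H \from A \to PZ$ between $g_0 j$ and $g_1 j$ lifts along $j$ against the fibration $PZ \to Z \times Z$, where $PZ$ is a pointed path object of $Z$ obtained from any path object in $\cat{M}$ with the induced basepoint. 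The lift again preserves basepoints on the nose by the same mechanism.

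Combining the lemma (applied to $i_X$ and $i_Y$) with the bijectivity of $(Rf)^*$ in the naturality square relating $f^*$ and $(Rf)^*$ then yields the bijectivity of $f^*$ by two-out-of-three for bijections. The main obstacle I anticipate is the injectivity step of the lemma, where one must verify that pointed right homotopy via the chosen pointed path object genuinely represents the equivalence relation defining pointed homotopy classes; this rests on the standard theory of homotopy in the slice model category $1 \downarrow \cat{M}$.
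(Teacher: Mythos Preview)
Your approach is correct, but it is considerably more elaborate than the paper's, which dispatches the result in one line: it is a direct instantiation of \cite[Prop.~1.2.5(iv)]{hovey} (a weak equivalence between cofibrant objects induces a bijection on homotopy classes into any fibrant object) applied to the slice model category $1 \downarrow \cat{M}$. Your argument, by contrast, essentially re-proves that proposition by hand in the pointed setting: factor the weak equivalence using fibrant replacement, reduce to the fibrant--cofibrant case via \cref{thm:wequiv_is_htpy_equiv_ptd}, and then show separately that acyclic cofibrations induce bijections by explicit lifting. This is the standard proof strategy behind Hovey's result, so you are not taking a genuinely different route so much as unpacking the citation.

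The obstacle you anticipate---that pointed right homotopy via a chosen path object represents the correct equivalence relation---is precisely what is absorbed by working in the slice model category and invoking the standard theory there. Once you grant yourself that, there is nothing left to do beyond citing the unpointed statement; so your detour through fibrant replacement and the explicit lifting lemma buys no additional generality and only duplicates work already in Hovey. One minor point to watch: your reading of ``cofibrant'' as cofibrant in $\cat{M}$ (via $\varnothing \to X$) is weaker than cofibrant in $1 \downarrow \cat{M}$ (which asks that $x \from 1 \to X$ be a cofibration), and it is the latter that both the paper's citation and your own injectivity step ultimately require.
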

\begin{proof}
    This is an instantiation of \cite[Prop.~1.2.5.iv]{hovey} to the case of the model category $\pt{\cat{M}}$.
\end{proof}

When defining homotopy groups, we will first define them for fibrant objects, and then extend the definition to all objects by taking fibrant replacement.
A priori, such a definition may depend on the choice of fibrant replacement.
The following technical statement shows that any two fibrant replacements are homotopy equivalent, thus showing independence of the choice of fibrant replacement.

\begin{proposition} \label{thm:fibr_repl_equiv}
    Let $X \in \cat{M}$ be cofibrant and $X', X'' \in \cat{M}$ be both fibrant and cofibrant. 
    Given weak equivalences $f \from X \xrightarrow{\sim} X'$ and $g \from X \xrightarrow{\sim} X''$, there exists a homotopy equivalence $X' \to X''$.
\end{proposition}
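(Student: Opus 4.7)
The plan is to manufacture a map $X' \to X''$ by combining a section of an acyclic fibration with a lift of $g$ along an acyclic cofibration, and then to invoke \cref{thm:wequiv_is_htpy_equiv} once we have shown the resulting map is a weak equivalence.

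First, I would apply the functorial factorization to $f$, writing it as an acyclic cofibration $i \from X \acto Y$ followed by a fibration $p \from Y \fto X'$. Since $f$ is a weak equivalence and $i$ is a weak equivalence, two-out-of-three forces $p$ to be an acyclic fibration. Because $X'$ is cofibrant, lifting $\id[X']$ against $p$ in the square with initial object on the left produces a section $s \from X' \to Y$ with $ps = \id[X']$; in particular $s$ is a weak equivalence.

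Next, I would use $i$ to lift $g$. The object $X''$ is fibrant, so $X'' \to 1$ is a fibration, and since $i$ is an acyclic cofibration we obtain $\tilde{g} \from Y \to X''$ satisfying $\tilde{g} \circ i = g$. Because $g$ and $i$ are weak equivalences, two-out-of-three yields that $\tilde{g}$ is a weak equivalence. Setting $h \coloneqq \tilde{g} \circ s \from X' \to X''$, another application of two-out-of-three shows $h$ is a weak equivalence.

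Finally, since both $X'$ and $X''$ are fibrant and cofibrant, \cref{thm:wequiv_is_htpy_equiv} upgrades $h$ to a homotopy equivalence. I do not expect a genuine obstacle here; the only mild subtlety is keeping track of the two-out-of-three applications and noting that $Y$ need not be fibrant (we only use that it is the codomain of the acyclic cofibration $i$ and the domain of the acyclic fibration $p$, so $X''$ being fibrant and $X'$ being cofibrant are precisely what make the lift and section available).
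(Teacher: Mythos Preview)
Your argument is correct: the factorization of $f$, the section $s$ produced by lifting against the acyclic fibration $p$ using cofibrancy of $X'$, the extension $\tilde g$ produced by lifting against $X'' \to 1$ using fibrancy of $X''$, and the final appeal to \cref{thm:wequiv_is_htpy_equiv} all go through exactly as you describe.

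The paper takes a slightly different route. Instead of factoring $f$ alone, it factors the pairing map $(f,g)\colon X \to X' \times X''$ as an acyclic cofibration $X \acto \overline{X}$ followed by a fibration $\overline{X} \fto X' \times X''$. The object $\overline{X}$ is then automatically both fibrant (over $X' \times X''$, hence over $1$) and cofibrant (under $X$), so \cref{thm:wequiv_is_htpy_equiv} applies directly to the projections $\overline{X} \to X'$ and $\overline{X} \to X''$, each of which is a weak equivalence by two-out-of-three. Choosing a homotopy inverse $X' \to \overline{X}$ and composing with $\overline{X} \to X''$ finishes. Compared with your proof, the paper's version trades two separate lifting steps for a single factorization into the product and two invocations of \cref{thm:wequiv_is_htpy_equiv}; your approach instead builds a genuine map $X' \to X''$ first and applies \cref{thm:wequiv_is_htpy_equiv} only once. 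Both are standard and of comparable length; the product trick has the mild conceptual advantage of exhibiting a common fibrant--cofibrant ``roof'' over $X'$ and $X''$, while yours avoids ever needing the intermediate object to be fibrant.
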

\begin{proof}
    We factor the map $(f, g) \from X \to X' \times X''$ as an acyclic cofibration $X \overset{\sim}{\ito} \overline{X}$ followed by a fibration $\overline{X} \twoheadrightarrow X' \times X''$.
    This gives the following commutative diagram.
    \[ \begin{tikzcd}
        X \ar[drr, "f", "\sim"', hook] \ar[r, "\sim"] & \overline{X} \ar[r, two heads] & X' \times X'' \ar[d, "\mathrm{pr}_1"] \\
        && X'
    \end{tikzcd} \]
    As $X$ is cofibrant, $\overline{X}$ is.
    As $X' \times X''$ is fibrant, $\overline{X}$ is.
    By 2-out-of-3, the map $\overline{X} \to X'$ is a weak equivalence.
    By \cref{thm:wequiv_is_htpy_equiv}, this map is a homotopy equivalence.
    In particular, it has a homotopy inverse $X' \xrightarrow{\sim} \overline{X}$ which is also a homotopy equivalence.
    
    An analogous argument gives a homotopy equivalence $\overline{X} \to X''$.
    The composite of homotopy equivalences $X' \to \overline{X} \to X''$ thus gives a homotopy equivalence $X' \to X''$.
\end{proof}

\subsection{Fibration categories}

Our second framework for abstract homotopy is that of fibration categories.
They were originally introduced by Brown \cite{brown} under the name \emph{categories of fibrant objects}, and have since been extensively studied by Radulescu-Banu \cite{radulescu-banu}, Cisinski \cite{cisinski:invariance}, and Szumi{\l}o \cite{szumilo:cofib-cat,szumilo:frames,szumilo:cocomplete,kapulkin-szumilo:frames}.
Brown's original motivation came from sheaf cohomology; since then, fibration categories have found many further applications.
These include: the seminal work of Jardine \cite{jardine:presheaves} on model structures on simplicial presheaves (later also adapted to the construction of the Kan--Quillen model structure on simplicial sets \cite{goerss-jardine}), the work of Cisinski on algebraic K-theory \cite{cisinski:invariance}, and more recent results relating fibration categories to type theory \cite{avigad-kapulkin-lumsdaine,kapulkin:lccqc,kapulkin-szumilo:internal-languages}.

\begin{definition}
    A \emph{fibration category} consists of a category $\cat{C}$ together with two wide subcategories (i.e.~containing all objects of $\cat{C}$) of \emph{fibrations} and \emph{weak equivalences} such that (in what follows, an \emph{acyclic fibration} is a map that is both a fibration and a weak equivalence):
    \begin{enumerate}
     \item weak equivalences satisfy two-out-of-three;
     \item all isomorphisms are acyclic fibrations;
     \item pullbacks along fibrations exist; fibrations and acyclic fibrations are stable under pullback;
     \item $\cat{C}$ has a terminal object 1; the canonical map $X \to 1$ is a fibration for any object $X \in \cat{C}$ (that is, all objects are \emph{fibrant});
     \item every map can be factored as a weak equivalence followed by a fibration.
    \end{enumerate}
\end{definition}

   \begin{example}[{\cite[Ex.~1]{brown}}] \label{ex:model_cat_fib_cat}
       For any model category $\cat{M}$, its full subcategory $\cat{M}_{\mathsf{fib}}$ of fibrant objects has a fibration category structure where a map is a fibration/weak equivalence if it is one in the model structure on $\cat{M}$.
   \end{example}

   \begin{example}
       For a fibration category $\cat{C}$ and $X \in \cat{C}$, the full subcategory of the slice category $\cat{C} \slice X$ over $X$ consisting of fibrations is a fibration category, which we denote by $\cat{C} \fslice X$.
   \end{example}
   
   The suitable notion of a functor between fibration categories is an \emph{exact} functor.
   \begin{definition}
    A functor $F \from \cat{C} \to \cat{D}$ between fibration categories is \emph{exact} if it preserves fibrations, acyclic fibrations, pullbacks along fibrations, and the terminal object.
   \end{definition}
   \begin{example}
       If $\adjunct{F}{G}{\cat{M}}{\cat{M'}}$ is a Quillen adjunction then the restriction of the right adjoint $G \from \cat{M'}_{\mathsf{fib}} \to \cat{M}_{\mathsf{fib}}$ to the subcategory of fibrant objects is an exact functor between fibration categories.
   \end{example}
   \begin{example}\label{lem:pb-is-exact}
    Let $f \from X \to Y$ be a morphism in a fibration category $\cat{C}$. Then the functor $f^* \from \cat{C} \fslice Y \to \cat{C} \fslice X$ given by pullback is exact.
   \end{example}

\section{Cubical sets and Kan complexes} \label{sec:cset}

In this section, we collect the necessary facts about cubical sets, using \cite{doherty-kapulkin-lindsey-sattler} as our primary reference.
Other references on the topic include \cite{cisinski:presheaves},\cite{cisinski:elegant}, \cite{jardine:a-beginning}, and \cite{kapulkin-voevodsky}.

\subsection{Cubical sets}
We begin by defining the cube category $\Box$.
The objects of $\Box$ are posets of the form $[1]^n = \{0 \leq 1\}^n$ and the maps are generated (inside the category of posets) under composition by the following four special classes:
\begin{itemize}
  \item \emph{faces} $\partial^n_{i,\varepsilon} \colon [1]^{n-1} \to [1]^n$ for $i = 1, \ldots , n$ and $\varepsilon = 0, 1$ given by:
  \[ \partial^n_{i,\varepsilon} (x_1, x_2, \ldots, x_{n-1}) = (x_1, x_2, \ldots, x_{i-1}, \varepsilon, x_i, \ldots, x_{n-1})\text{;}  \]
  \item \emph{degeneracies} $\sigma^n_i \colon [1]^n \to [1]^{n-1}$ for $i = 1, 2, \ldots, n$ given by:
  \[ \sigma^n_i ( x_1, x_2, \ldots, x_n) = (x_1, x_2, \ldots, x_{i-1}, x_{i+1}, \ldots, x_n)\text{;}  \]
  \item \emph{negative connections} $\gamma^n_{i,0} \colon [1]^n \to [1]^{n-1}$ for $i = 1, 2, \ldots, n-1$ given by:
  \[ \gamma^n_{i,0} (x_1, x_2, \ldots, x_n) = (x_1, x_2, \ldots, x_{i-1}, \max\{ x_i , x_{i+1}\}, x_{i+2}, \ldots, x_n) \text{.} \]
  \item \emph{positive connections} $\gamma^n_{i,1} \colon [1]^n \to [1]^{n-1}$ for $i = 1, 2, \ldots, n-1$ given by:
  \[ \gamma^n_{i,1} (x_1, x_2, \ldots, x_n) = (x_1, x_2, \ldots, x_{i-1}, \min\{ x_i , x_{i+1}\}, x_{i+2}, \ldots, x_n) \text{.} \]
\end{itemize}

These maps obey the following \emph{cubical identities}:

\[ \begin{array}{l l}
    \partial_{j, \varepsilon'} \partial_{i, \varepsilon} = \partial_{i+1, \varepsilon} \partial_{j, \varepsilon'} \quad \text{for } j \leq i; & 
    \sigma_j \partial_{i, \varepsilon} = \begin{cases}
        \partial_{i-1, \varepsilon} \sigma_j & \text{for } j < i; \\
        \id                                                       & \text{for } j = i; \\
        \partial_{i, \varepsilon} \sigma_{j-1} & \text{for } j > i;
    \end{cases} \\
    \sigma_i \sigma_j = \sigma_j \sigma_{i+1} \quad \text{for } j \leq i; &
    \gamma_{j,\varepsilon'} \gamma_{i,\varepsilon} = \begin{cases}
    \gamma_{i,\varepsilon} \gamma_{j+1,\varepsilon'} & \text{for } j > i; \\
    \gamma_{i,\varepsilon}\gamma_{i+1,\varepsilon} & \text{for } j = i, \varepsilon' = \varepsilon;
    \end{cases} \\
    \gamma_{j,\varepsilon'} \partial_{i, \varepsilon} = \begin{cases} 
        \partial_{i-1, \varepsilon} \gamma_{j,\varepsilon'}   & \text{for } j < i-1 \text{;} \\
        \id                                                         & \text{for } j = i-1, \, i, \, \varepsilon = \varepsilon' \text{;} \\
        \partial_{j, \varepsilon} \sigma_j         & \text{for } j = i-1, \, i, \, \varepsilon = 1-\varepsilon' \text{;} \\
        \partial_{i, \varepsilon} \gamma_{j-1,\varepsilon'} & \text{for } j > i;
    \end{cases} &
    \sigma_j \gamma_{i,\varepsilon} = \begin{cases}
        \gamma_{i-1,\varepsilon} \sigma_j  & \text{for } j < i \text{;} \\
        \sigma_i \sigma_i           & \text{for } j = i \text{;} \\
        \gamma_{i,\varepsilon} \sigma_{j+1} & \text{for } j > i \text{.} 
    \end{cases}
\end{array} \]
\begin{definition} \leavevmode
    \begin{enumerate}
        \item A \emph{cubical set} is a functor $\boxcat^\op \to \Set$.
        \item Given cubical sets $X, Y$, a \emph{cubical map} $X \to Y$ is a natural transformation from $X$ to $Y$.
    \end{enumerate}
\end{definition}
We write $\cSet$ for the category of cubical sets and cubical maps.
Given a cubical set $X$, we write $X_n$ for the value of $X$ at $[1]^n$ and write cubical operators on the right e.g.~given an $n$-cube $x \in X_n$ of $X$, we write $x\face{}{1,0}$ for the $\face{}{1,0}$-face of $x$.
\begin{definition}
    For $n \geq 0$,
    \begin{enumerate}
        \item the \emph{combinatorial $n$-cube} $\cube{n}$ is the representable functor $\boxcat(-, [1]^n) \from \boxcat^\op \to \Set$;
        \item the \emph{boundary of the $n$-cube} $\bd \cube{n}$ is the subobject of $\cube{n}$ defined by
        \[ \bd \cube{n} := \bigcup\limits_{\substack{j=1,\dots,n \\ \eta = 0, 1}} \operatorname{Im} \face{}{j,\eta}. \]
        \item given $i = 1, \dots, n$ and $\varepsilon = 0, 1$, the \emph{$(i,\varepsilon)$-open box} $\dfobox$ is the subobject of $\bd \cube{n}$ defined by
        \[ \dfobox := \bigcup\limits_{(j,\eta) \neq (i,\varepsilon)} \operatorname{Im} \face{}{j,\eta}. \]
    \end{enumerate}
\end{definition}

Define a monoidal product $\uvar \gprod \uvar \from \boxcat \times \boxcat \to \boxcat$ on the cube category by $[1]^m \gprod [1]^n = [1]^{m+n}$.
Postcomposing with the Yoneda embedding and left Kan extending gives the \emph{geometric product} of cubical sets.
\[ \begin{tikzcd}
    \boxcat \times \boxcat \ar[r, "\gprod"] \ar[d] & \boxcat \ar[r] & \cSet \\
    \cSet \times \cSet \ar[urr, "\gprod"']
\end{tikzcd} \]
Explicitly, for cubical sets $X$ and $Y$, the geometric product $X \gprod Y$ may be computed as the colimit
\[ X \gprod Y \cong \colim\limits_{\substack{\cube{m} \to X \\ \cube{n} \to Y}} \cube{m+n}. \]
From this, one sees that $\cube{0}$ is the unit of this monoidal product, as the above formula gives
\[ X \gprod \cube{0} \cong \colim\limits_{\substack{\cube{m} \to X \\ \cube{n} \to \cube{0}}} \cube{m+n} \cong \colim\limits_{\cube{n} \to X} \cube{n} \cong X \]
and similarly for $\cube{0} \gprod X$.

Another consequence of this formula is that 0-cubes of the geometric product are indexed by pairs $(x \in X_0 , y \in Y_0)$.
That is,
\[ (X \gprod Y)_0 \cong X_0 \times Y_0. \]
The 1-cubes of $X \gprod Y$ form a pushout of sets
\[ \begin{tikzcd}
    X_0 \times Y_0 \ar[r, "{\degen{}{1} \times \id}"] \ar[d, "{\id \times \degen{}{1}}"'] \ar[rd, phantom, "\ulcorner" very near end] & X_1 \times Y_0 \ar[d] \\
    X_0 \times Y_1 \ar[r] & (X \gprod Y)_1
\end{tikzcd} \]
For instance, when $X = Y = \cube{1}$, we visualize the non-degenerate 1-cubes of $\cube{1} \gprod \cube{1} \cong \cube{2}$ as a gluing
\[ \begin{tikzcd}
    00 & 10 \ar[d, phantom, ""{name=TLtoTR}] & {} & 00 \ar[r] \ar[d, phantom, ""{name=TRfromTL}] \ar[from=TLtoTR, to=TRfromTL, shorten=2.2em] & 10 \\
    01 \ar[r, phantom, ""{name=TLtoBL}] & 11 & {} & 01 \ar[r, ""{name=TRtoBR}] & 11 \\
    {} & {} & {} & {} & {} \\
    00 \ar[d] \ar[r, phantom, ""{name=BLfromTL}] \ar[from=TLtoBL, to=BLfromTL, shorten=1.7em] & 10 \ar[d, ""{name=BLtoBR}] & {} & 00 \ar[r, ""{name=BRfromTR}] \ar[d, ""{name=BRfromBL}] \ar[from=TRtoBR, to=BRfromTR, shorten=1.7em] \ar[from=BLtoBR, to=BRfromBL, shorten=2.2em] & 10 \ar[d] \\
    01 & 11 & {} & 01 \ar[r] & 11
\end{tikzcd} \]
where the 1-cubes of $X$ appear along the horizontal and the 1-cubes of $Y$ appear along the vertical.

This product is biclosed: for a cubical set $X$, we write $\lhom[L](X, \uvar) \from \cSet \to \cSet$ and $\rhom (X, \uvar) \from \cSet \to \cSet$ for the right adjoints to the functors $\uvar \gprod X$ and $X \gprod \uvar$, respectively.

The geometric product is better behaved than the categorical product from the point of view of homotopy theory.
For instance, the categorical product of cubes is not isomorphic to a cube (in fact, for cubical sets without connections, the categorical product of cubes is not usually contractible).
In this sense, the geometric product more closely resembles the product of topological spaces.

We use the geometric product to define pushout products and pullback exponential maps.
\begin{definition}
    For maps $f \from A \to B$ and $g \from X \to Y$,
    \begin{enumerate}
        \item the \emph{pushout product} $f \pp g$ of $f$ and $g$ is the map $A \gprod Y \cup_{A \gprod X} B \gprod X \to B \gprod Y$ from the pushout induced by the commutative square
        \[ \begin{tikzcd}
            A \gprod X \ar[r, "f \gprod X"] \ar[d, "A \gprod g"'] \ar[rd, phantom, "\ulcorner" very near end] & B \gprod X \ar[d] \ar[ddr, "B \gprod g", bend left] & {} \\
            A \gprod Y \ar[r] \ar[drr, bend right, "f \gprod Y"] & \bullet \ar[rd, dotted, "f \pp g"] \\
            {} && B \gprod Y
        \end{tikzcd} \]
        \item the (right) \emph{pullback exponential} $f \pe g$ of $f$ along $g$ is the map $\rhom(B, X) \to \rhom(A, X) \times_{\rhom(A, Y)} \rhom(B, Y)$ to the pullback induced by the commutative square
        \[ \begin{tikzcd}
            \rhom(B, X) \ar[rrd, bend left, "f^*"] \ar[ddr, bend right, "g_*"'] \ar[rd, dotted, "f \pe g"] & {} & {} \\
            {} & \bullet \ar[r] \ar[d] \ar[rd, phantom, "\ulcorner" very near start, yshift=-1ex] & \rhom(A, X) \ar[d, "g_*"] \\
            {} & \rhom(B, Y) \ar[r, "f^*"] & \rhom(A, Y)
        \end{tikzcd} \]
    \end{enumerate}    
\end{definition}

\begin{example}[cf.~{\cite[Lem.~1.26]{doherty-kapulkin-lindsey-sattler}}]
    For $n \geq 0$,
    \begin{itemize}
        \item given $m \geq 0$, the boundary inclusion $\bd \cube{m+n} \ito \cube{m+n}$ is the pushout product
        \[ (\bd \cube{m} \ito \cube{m}) \pp (\bd \cube{n} \ito \cube{n})\text{;} \]
        \item given $i = 1, \dots, n$ and $\varepsilon = 0, 1$, the open box inclusion $\dfobox \ito \cube{n}$ is the pushout product
        \[ (\bd \cube{i-1} \ito \cube{i-1}) \pp (\{1 - \varepsilon\} \ito \cube{1}) \pp (\bd \cube{n-i} \ito \cube{n-i}) \text{;}\]
        \item the pushout product of any map with $\varnothing \ito \cube{0}$ is itself.
    \end{itemize}
\end{example}

\begin{example}\label{pbexp-examples}
    For a cubical set $X$,
    \begin{itemize}
        \item the pre-composition map $(\face{*}{1,0}, \face{*}{1,1}) \from \rhom(\cube{1}, X) \to X \times X$ is the pullback exponential of $\bd \cube{1} \ito \cube{1}$ along $X \to \cube{0}$;
        \item the pre-composition map $\face{*}{1,0} \from \rhom(\cube{1}, X) \to X$ is the pullback exponential of $\{ 0 \} \ito \cube{1}$ along $X \to \cube{0}$;
        \item the pullback exponential of $\varnothing \ito \cube{0}$ with any map is itself.
    \end{itemize}
\end{example}
\begin{remark}
    There is a notion of left pullback exponential, however we will only work with right pullback exponentials.
\end{remark}

\subsection{Drawing conventions}
Let $X$ be a cubical set.
We depict a map $u \from \cube{1} \to X$ as an arrow
\[ \begin{tikzcd}
    u\face{}{1,0} \ar[r] & u\face{}{1,1}
\end{tikzcd} \]
starting at the $\face{}{1,0}$-face and ending at the $\face{}{1,1}$-face.
For $x \in X$, the degenerate 1-cube $x\degen{}{1} \from \cube{1} \to X$ is depicted by a double line
\[ \begin{tikzcd}
    x \ar[r, equal] & x
\end{tikzcd} \]
with no arrow head.
We depict a map $v \from \cube{2} \to X$ as a square
\[ \begin{tikzcd}
    \bullet \ar[r, "v\face{}{2,0}"] \ar[d, "v\face{}{1,0}"'] \ar[rd, "v" description, phantom] & \bullet \ar[d, "v\face{}{1,1}"] \\
    \bullet \ar[r, "v\face{}{2,1}"'] & \bullet
\end{tikzcd} \]
We omit labels if they are clear from context.
A map $\dfobox[2] \to X$ is depicted as a square without the $\face{}{i,\varepsilon}$-face, e.g.~a map $\obox{2}{2,1} \to X$
is depicted as
\[ \begin{tikzcd}
    \bullet \ar[d] \ar[r] & \bullet \ar[d] \\
    \bullet & \bullet
\end{tikzcd} \]
We depict a map $\cube{3} \to X$ as a cube
\[ \begin{tikzcd}
    \bullet \ar[rr] \ar[rd] \ar[ddd] && \bullet \ar[rd] \ar[ddd] & \\
    & \bullet \ar[rr, crossing over] && \bullet \ar[ddd] \\
    \\
    \bullet \ar[rr] \ar[rd] && \bullet \ar[rd] & \\
    & \bullet \ar[rr] \ar[from=uuu, crossing over] && \bullet
\end{tikzcd} \]
where
\begin{itemize}
    \item the $\face{}{1,0}$- and $\face{}{1,1}$-faces are the left and right side faces, respectively;
    \item the $\face{}{2,0}$- and $\face{}{2,1}$-faces are the back and front faces, respectively;
    \item the $\face{}{3,0}$- and $\face{}{3,1}$-faces are the top and bottom faces, respectively.
\end{itemize}
We also depict a map $\dfobox[3] \to X$ as a cube.
That is, our diagram will not specify that there is a missing face.

\subsection{Homotopies and homotopy equivalences}
We begin by defining Kan fibrations and Kan complexes.
\begin{definition}\leavevmode
    \begin{enumerate}
        \item A cubical map $f \from X \to Y$ is a \emph{Kan fibration} if it has the right lifting property with respect to open box inclusions.
        That is, given a commutative square,
        \[ \begin{tikzcd}
            \dfobox \ar[r] \ar[d] & X \ar[d, "f"] \\
            \cube{n} \ar[r] & Y
        \end{tikzcd} \]
        there exists a map $\cube{n} \to X$ which makes the triangles in the diagram
        \[ \begin{tikzcd}
            \dfobox \ar[r] \ar[d] & X \ar[d, "f"] \\
            \cube{n} \ar[r] \ar[ur, dotted] & Y
        \end{tikzcd} \]
        commute.
        \item A cubical set $X$ is a \emph{Kan complex} if the  map $X \to \cube 0$ is a Kan fibration.
    \end{enumerate}
\end{definition}
We write $\Kan$ for the full subcategory of $\cSet$ spanned by Kan complexes.

To define a notion of homotopy between cubical maps, we first define elementary homotopies.
\begin{definition}
    Let $f, g \from X \to Y$ be cubical maps. An \emph{elementary homotopy} from $f$ to $g$ is a map $H \from X \gprod \cube{1} \to Y$ such that the diagram
    \[ \begin{tikzcd}
        X \gprod \cube{0} \ar[d, left, "\face{}{1,0}"'] \ar[dr, "f"] & \\
        X \gprod \cube{1} \ar[r, "H"] & Y \\
        X \gprod \cube{0} \ar[u, left, "\face{}{1,1}"] \ar[ur, "g"'] & 
    \end{tikzcd} \]
    commutes.
\end{definition}
By the adjunction $\adjunct{X \gprod \uvar}{\rhom(X, \uvar)}{\cSet}{\cSet}$, an elementary homotopy from $f$ to $g$ corresponds to a 1-cube from $f$ to $g$ in $\rhom(X, Y)$ i.e.~a map $\cube{1} \to \rhom(X, Y)$ whose $\face{}{1,0}$-face is $f$ and whose $\face{}{1,1}$-face is $g$.
\begin{remark} \label{rem:htpy_def}
    One can equivalently define elementary homotopies by tensoring on the left with $\cube{1}$.
    The two definitions agree when considering homotopies between maps into a Kan complex, but differ in general (for instance, $\face{}{2,0}, \face{}{2,1} \from \cube{1} \to \cube{2}$ are homotopic when tensoring with $\cube{1}$ on the right, but not on the left).
    We consistently use the definition given above, and any departures from this convention are remarked on explicitly.
\end{remark}

\begin{proposition} \label{thm:htpy_eq_rel}
    If $Y$ is Kan then the relation of elementary homotopy defines an equivalence relation on maps $X \to Y$. \noproof
\end{proposition}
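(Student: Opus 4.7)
The plan is to exploit the adjunction $X \gprod \uvar \adj \rhom(X, \uvar)$ to identify elementary homotopies $f \sim g$ with $1$-cubes in $\rhom(X, Y)$ from $f$ to $g$, thereby reducing the claim to the statement that, in any Kan complex $Z$, the relation ``there exists a $1$-cube from $z$ to $z'$'' is an equivalence relation on $0$-cubes. The preliminary step is to verify that $\rhom(X, Y)$ is itself a Kan complex.

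For this, a lifting problem for $\rhom(X, Y) \to \cube{0}$ against an open box inclusion $\dfobox \ito \cube{n}$ transposes under the adjunction to a lifting problem for $Y \to \cube{0}$ against the pushout product $(\varnothing \ito X) \pp (\dfobox \ito \cube{n})$, which unfolds to $X \gprod \dfobox \ito X \gprod \cube{n}$. This inclusion is a trivial cofibration in the Grothendieck model structure on $\cSet$ (a pushout product of a cofibration with a trivial cofibration), and $Y$ being Kan is equivalent to $Y$ being fibrant, so $Y$ lifts against it.

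Granting that $\rhom(X, Y)$ is Kan, the three axioms follow from explicit fillers. Reflexivity is witnessed by the degenerate $1$-cube $f\degen{}{1}$. For symmetry, given $H \from f \to g$, I would fill the open box $\obox{2}{1,1}$ whose $\face{}{2,0}$-face is $H$ and whose $\face{}{1,0}$- and $\face{}{2,1}$-faces are both $f\degen{}{1}$; a direct corner check shows this data is compatible, and the resulting $\face{}{1,1}$-face is a $1$-cube from $g$ to $f$. For transitivity, given $H \from f \to g$ and $K \from g \to h$, I would fill the open box $\obox{2}{2,1}$ whose $\face{}{2,0}$- and $\face{}{1,1}$-faces are $H$ and $K$ respectively, and whose $\face{}{1,0}$-face is $f\degen{}{1}$; the corners again match, and the resulting $\face{}{2,1}$-face is a $1$-cube from $f$ to $h$.

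The main obstacle is the first step: Kanness of $\rhom(X, Y)$ rests on the pushout-product property of trivial cofibrations in the Grothendieck model structure on $\cSet$, a fact recorded in \cite{doherty-kapulkin-lindsey-sattler}. Once this input is in hand, the remainder of the argument reduces to a pair of routine Kan liftings together with the corner-compatibility checks in $\cube{2}$.
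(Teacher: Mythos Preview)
Your proof is correct and matches the paper's approach: the paper defers to the relative version (\cref{thm:rel_htpy_eq_rel}), which when specialized to $A = \varnothing$ lifts maps $X \gprod \obox{2}{j,\varepsilon} \to Y$ using the anodyne pushout product $(\varnothing \ito X) \pp (\obox{2}{j,\varepsilon} \ito \cube{2})$---precisely the transpose of your argument through $\rhom(X,Y)$. The only cosmetic differences are your choice of open boxes (e.g.\ $\obox{2}{1,1}$ for symmetry where the paper uses $\obox{2}{2,1}$) and your isolating Kanness of $\rhom(X,Y)$ as a preliminary step rather than invoking \cref{thm:pp_anodyne} inline.
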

We omit a proof of this statement and note that a generalization of this result is proven in \cref{thm:rel_htpy_eq_rel}.
In light of \cref{thm:htpy_eq_rel}, we write $[X, Y]$ for the set of homotopy classes of maps $X \to Y$.

If $Y$ is not Kan, the relation of elementary homotopy may not be symmetric or transitive.
We define the notion of homotopy to be the symmetric transitive closure of elementary homotopy.
\begin{definition}
    Let $f, g \from X \to Y$ be cubical maps. A \emph{homotopy} from $f$ to $g$ is a zig-zag of elementary homotopies from $f$ to $g$ (i.e.~a zig-zag of 1-cubes in $\rhom(X, Y)$ from $f$ to $g$).
\end{definition}
\begin{remark}
    As discussed previously, if $Y$ is a Kan complex then the notion of homotopy and elementary homotopy coincide.
\end{remark}
This notion of homotopy allows us to define the notion of \emph{homotopy equivalence}.
\begin{definition}
    For cubical sets $X, Y$, a map $f \from X \to Y$ is a \emph{homotopy equivalence} if there exist
    \begin{itemize}
        \item a map $g \from Y \to X$;
        \item a homotopy from $gf$ to $\id[X]$;
        \item a homotopy from $fg$ to $\id[Y]$.
    \end{itemize}
\end{definition}

\begin{proposition}
    For $n \geq 1$, the map $\degen{}{n} \from \cube{n} \to \cube{n-1}$ is a homotopy equivalence with homotopy inverse given by $\face{}{n,1} \from \cube{n-1} \to \cube{n}$.
\end{proposition}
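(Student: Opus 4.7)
The plan is to verify the two composites of $\degen{}{n}$ and $\face{}{n,1}$ separately: one is strictly equal to the identity, and the other is elementarily homotopic to the identity.

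First, I would observe that $\degen{}{n} \circ \face{}{n,1} = \id[\cube{n-1}]$ as a direct consequence of the cubical identity $\sigma_j \partial_{i,\varepsilon} = \id$ in the case $j = i = n$.

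For the other composite, I would construct an explicit elementary homotopy from $\id[\cube{n}]$ to $\face{}{n,1} \circ \degen{}{n}$. Under the canonical isomorphism $\cube{n} \gprod \cube{1} \iso \cube{n+1}$ (arising from the definition of $\gprod$ via left Kan extension of the monoidal product on $\boxcat$), such a homotopy corresponds to a single morphism $[1]^{n+1} \to [1]^n$ in $\boxcat$, and my candidate is the negative connection $H = \conn{n+1}{n,0}$. The two faces to check are $\conn{n+1}{n,0} \circ \face{}{n+1,0}$ and $\conn{n+1}{n,0} \circ \face{}{n+1,1}$, which by the cubical identity for $\gamma_{j,\varepsilon'} \partial_{i,\varepsilon}$ in the case $j = i - 1$ (with $j = n$, $\varepsilon' = 0$, $i = n+1$) evaluate respectively to $\id[\cube{n}]$ and $\face{}{n,1} \degen{}{n}$. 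This gives an elementary homotopy, and hence a homotopy (as a zig-zag of length one), from $\id[\cube{n}]$ to $\face{}{n,1} \degen{}{n}$.

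There is no real obstacle here. Geometrically, on $[0,1]^n$ the map collapsing the last coordinate to $1$ is homotopic to the identity via $(x_1, \dots, x_n, t) \mapsto (x_1, \dots, x_{n-1}, \max\{x_n, t\})$, which combinatorially is exactly the negative connection $\conn{n+1}{n,0}$. The only care needed is in choosing the correct parity of connection and orienting the homotopy so that its two faces land on the intended endpoints.
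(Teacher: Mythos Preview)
Your proof is correct and essentially identical to the paper's: both verify $\degen{}{n}\face{}{n,1} = \id$ via the cubical identity for $\sigma_j\partial_{i,\varepsilon}$, and both use the negative connection $\conn{}{n,0} \from \cube{n}\gprod\cube{1} \to \cube{n}$ as the elementary homotopy, checking that its two endpoint faces are $\id[\cube{n}]$ and $\face{}{n,1}\degen{}{n}$. Your added geometric remark about $\max$ is a nice touch but not present in the paper.
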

\begin{proof}
    We have by cubical identites that $\degen{}{n}\face{}{n,1} = \id[\cube{n-1}]$.
    The map $\conn{}{n,0} \from \cube{n} \gprod \cube{1} \to \cube{n}$ exhibits $\face{}{n,1}$ as a homotopy retraction of $\degen{}{n}$ since $\conn{}{n,0}\face{}{n+1,0} = \id[\cube{n}]$ and $\conn{}{n,0}\face{}{n+1,1} = \face{}{n,1}\degen{}{n}$.
\end{proof}
\begin{remark}
    The positive connection map $\conn{}{n,1} \from \cube{n+1} \to \cube{n}$ exhibits $\face{}{n,0}$ as a homotopy retraction of $\degen{}{n}$, thus showing the map $\face{}{n,0}$ is a homotopy equivalence.
\end{remark}
From this, we derive the following corollary.
\begin{corollary} \label{thm:cube_n_contr}
    For $n \geq 0$, the map $\face{}{1,1} \dots \face{}{n,1} \from \cube{0} \to \cube{n}$ is a homotopy equivalence. \noproof
\end{corollary}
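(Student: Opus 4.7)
The plan is to proceed by induction on $n$ and use the fact (established in the previous proposition) that each single face map $\face{}{i,1} \from \cube{i-1} \to \cube{i}$ is a homotopy equivalence, with homotopy inverse $\degen{}{i}$. The composite $\face{}{1,1} \dots \face{}{n,1} \from \cube{0} \to \cube{n}$ is built by composing $n$ such maps, so the corollary reduces to showing that homotopy equivalences are closed under composition.

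For the base case $n = 0$, the map in question is $\id[\cube{0}]$, which is trivially a homotopy equivalence. For the inductive step, I would factor
\[ \face{}{1,1} \dots \face{}{n,1} = \face{}{n,1} \circ (\face{}{1,1} \dots \face{}{n-1,1}) \]
(or symmetrically, depending on composition convention), and apply the induction hypothesis to the inner composite together with the previous proposition applied to $\face{}{n,1}$.

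The only real content to verify is that homotopy equivalences compose. For this I would observe that if $H \from X \gprod \cube{1} \to Y$ is an elementary homotopy from $f_0$ to $f_1$ and $g \from Y \to Z$ is any map, then $g \circ H$ is an elementary homotopy from $g f_0$ to $g f_1$; analogously for precomposition, since $\uvar \gprod \cube{1}$ is a functor. Consequently, a zigzag of elementary homotopies is preserved by both pre- and post-composition, so homotopy is a congruence on the arrow category. Given homotopy inverses $f' \from Y \to X$ for $f$ and $g' \from Z \to Y$ for $g$, the candidate inverse $f' g'$ for $g f$ satisfies $(f' g')(g f) = f'(g' g) f$ which is homotopic to $f' f$, which is homotopic to $\id[X]$, and similarly on the other side.

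The only step that might be considered a (mild) obstacle is this closure-under-composition argument, since homotopies in the excerpt are defined as zigzags of elementary homotopies rather than single homotopies; but the argument above reduces to a diagram chase that respects the zigzag structure, so no technical subtlety arises. The rest of the proof is a one-line induction.
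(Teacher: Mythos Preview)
Your proposal is correct and matches the paper's intended argument: the paper states this corollary without proof, simply deriving it from the preceding proposition that each $\face{}{n,1}$ is a homotopy equivalence, which is exactly your inductive reduction. The closure of homotopy equivalences under composition that you spell out is the routine verification the paper leaves to the reader.
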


\subsection{The model structure on cubical sets}
\begin{theorem}[Cisinski, cf.~{\cite[Thm.~1.34]{doherty-kapulkin-lindsey-sattler}}]
    The category of cubical sets has a model structure where
    \begin{itemize}
        \item cofibrations are monomorphisms;
        \item fibrations are Kan fibrations;
        \item weak equivalences are maps $X \to Y$ which induce bijections $[X, Z] \to [Y, Z]$ for all Kan complexes $Z$. \qed
    \end{itemize}
\end{theorem}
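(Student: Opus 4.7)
The plan is to follow Cisinski's construction of a model structure on the presheaf category $\cSet$: the required input is the class of cofibrations (all monomorphisms) together with a set of generating anodyne extensions (the open box inclusions $\dfobox \ito \cube{n}$). The six axioms must then be verified using this combinatorial data. First, I would establish that monomorphisms coincide with the saturation under pushouts, transfinite composition, and retracts of the boundary inclusions $\bd\cube{n} \ito \cube{n}$; this follows from the skeletal filtration of a cubical set, a standard fact for presheaves on a direct-like site such as $\boxcat$. Two-out-of-three and retract closure for weak equivalences are immediate from the definition, as bijections of sets satisfy both properties. The small object argument, applicable since $\cSet$ is locally presentable, then yields two functorial factorizations: every map factors as a monomorphism followed by a map with the right lifting property against all boundary inclusions (call these \emph{trivial fibrations}), and as an anodyne extension followed by a Kan fibration.

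The bulk of the work is to identify the resulting orthogonal classes correctly: trivial fibrations should coincide with Kan fibrations that are weak equivalences, and anodyne extensions should coincide with monomorphisms that are weak equivalences. For the first identification, one direction uses that a trivial fibration admits a section and a fiberwise homotopy to the identity, built by iteratively lifting against boundary inclusions and using the cylinder $-\gprod \cube{1}$; the other direction combines a Ken-Brown-style argument with the factorization. The second identification is then forced by a retract argument: any acyclic monomorphism factors as an anodyne extension followed by a Kan fibration, the latter becomes a trivial fibration by two-out-of-three, and the original map is a retract of the anodyne extension. The lifting axioms follow formally from these identifications.

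The main obstacle will be the \emph{pushout-product property}: proving that the pushout product of any monomorphism with an open box inclusion $\dfobox \ito \cube{n}$ is again an anodyne extension. This is the engine driving both the homotopy extension property and the compatibility of the cylinder $-\gprod\cube{1}$ with anodyne extensions, and it underlies the argument that anodyne extensions are weak equivalences. The proof requires an explicit combinatorial decomposition of open boxes into lower-dimensional boxes, and this is precisely where the connection operators $\conn{}{i,\varepsilon}$ become essential: without them, certain intermediate faces cannot be filled by lower-dimensional anodyne data, and the decomposition fails—this is why the cube category in this paper is taken with connections. Once this pushout-product compatibility is in place, together with the identifications of the preceding paragraph, all six axioms follow.
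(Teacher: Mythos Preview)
The paper does not prove this theorem; it is stated with a citation to \cite[Thm.~1.34]{doherty-kapulkin-lindsey-sattler} and used as a black box throughout. There is therefore no ``paper's own proof'' to compare against.

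That said, your outline is the correct one and is essentially the argument carried out in the cited reference (which in turn specializes Cisinski's general machinery from \cite{cisinski:presheaves}). A few remarks on details you would need to pin down. First, the claim that monomorphisms are the saturation of boundary inclusions requires that degenerate cubes be uniquely expressible as a degeneracy of a non-degenerate cube; for the box category with connections this is the Eilenberg--Zilber property, and it is not entirely trivial---the cited paper establishes it explicitly. Second, your identification of trivial fibrations with acyclic Kan fibrations goes in the easy direction (trivial fibrations are deformation retractions, hence weak equivalences), but the converse---that an acyclic Kan fibration lifts against all boundary inclusions---needs more than a Ken Brown argument: one typically shows first that anodyne maps are weak equivalences (using the pushout-product property and the cylinder), then uses the factorization plus a retract argument. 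You have the pieces but the order of dependencies should be stated carefully. Finally, your diagnosis of the pushout-product step as the crux is exactly right, and your remark that connections are what make the combinatorial decomposition go through is the key insight behind the choice of cube category in this paper.
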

For cubical sets without connections, this is proven by Cisinski \cite[Thm.~8.4.38]{cisinski:presheaves} and Jardine \cite[Ex.~4.23]{jardine:a-beginning}.
When the cube category has one or both connections, the proof is essentially similar: the case of one connection is considered by Cisinski \cite[Thm.~1.7]{cisinski:elegant} and the case of both connections is recorded in \cite[Thm.~1.34]{doherty-kapulkin-lindsey-sattler}.

In particular, the fibrant objects are exactly Kan complexes.
By \cref{ex:model_cat_fib_cat}, the full subcategory $\Kan$ of Kan complexes is a fibration category.
\begin{theorem} \label{thm:kan_fib}
    The category $\Kan$ has a fibration category structure where
    \begin{itemize}
        \item fibrations are maps which have the right lifting property with respect to all open box inclusions;
        \item weak equivalences are maps $X \to Y$ which induce bijections $[Y, Z] \to [X, Z]$ for all Kan complexes $Z$. \noproof
    \end{itemize} 
\end{theorem}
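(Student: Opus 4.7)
The plan is to derive this directly from \cref{ex:model_cat_fib_cat} applied to the model structure on cubical sets recalled just above. That example provides, for any model category $\cat{M}$, a fibration category structure on the full subcategory $\cat{M}_{\mathsf{fib}}$ of fibrant objects, with fibrations and weak equivalences both created by the inclusion $\cat{M}_{\mathsf{fib}} \hookrightarrow \cat{M}$. So I would not reprove any of the six axioms of a fibration category by hand; instead I would simply identify $\Kan$ with $\cSet_{\mathsf{fib}}$ and match the two classes.

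The first step is to observe that the fibrant objects of the cubical model structure are exactly the Kan complexes, so that as categories $\Kan = \cSet_{\mathsf{fib}}$. This is noted explicitly in the sentence preceding the theorem and is immediate from the definitions: the map $X \to \cube{0}$ has the right lifting property against open box inclusions if and only if $X \to 1$ is a fibration in the model structure, since the terminal object of $\cSet$ is $\cube{0}$.

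The second step is to match the two classes of maps. For fibrations, the model structure on $\cSet$ defines them as Kan fibrations, i.e.\ maps with the right lifting property against all open box inclusions $\dfobox \ito \cube{n}$; this is precisely the description given in the theorem. For weak equivalences, the model structure defines them as maps inducing bijections on homotopy classes into every Kan complex, which is exactly the description in the second clause (the direction of the induced map is a matter of convention, since being a bijection is self-dual). Hence \cref{ex:model_cat_fib_cat} provides the required fibration category structure on $\Kan$.

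I do not anticipate any real obstacles here: the content of the theorem is just bookkeeping, recording that the model-theoretic notions restrict correctly to the subcategory of fibrant objects. The only point that merits a brief verification is that pullbacks along fibrations exist in $\Kan$, but this is part of the general statement of \cref{ex:model_cat_fib_cat} and follows from the fact that Kan fibrations in $\cSet$ are closed under pullback (since they are defined by a right lifting property) together with the fact that the pullback of a Kan fibration $Y \fto Z$ along any map $X \to Z$ lands in $\Kan$ whenever $X$ is Kan, as the composite $X \pull_Z Y \to X \to \cube{0}$ is then a composite of Kan fibrations.
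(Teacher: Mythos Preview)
Your proposal is correct and matches the paper's approach exactly: the paper states this theorem with a \texttt{\textbackslash noproof} marker, having already observed in the preceding sentence that fibrant objects in the cubical model structure are exactly Kan complexes and that \cref{ex:model_cat_fib_cat} then supplies the fibration category structure on $\Kan$. Your additional remarks on matching the two classes and on closure of $\Kan$ under pullbacks along fibrations are correct elaborations of what the paper leaves implicit.
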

\begin{proposition} \label{thm:pp_anodyne}
    We have that
    \begin{enumerate}
        \item the pushout product $f \pp g$ of cofibrations $f, g$ is a cofibration which is acyclic if either $f$ or $g$ is;
        \item the pullback exponential $f \pe g$ of a cofibration $f$ along a fibration $g$ is a fibration which is acyclic if either $f$ or $g$ is.
    \end{enumerate}
\end{proposition}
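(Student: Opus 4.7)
The strategy is to reduce (2) to (1) via the pushout-product/pullback-exponential adjunction, then prove (1) by cofibrant generation. For the reduction, note that for any maps $f, g, h$ of cubical sets, there is a natural bijection between lifts of $f \pp h$ against $g$ and lifts of $f$ against $h \pe g$. Combined with the characterizations of cofibrations and acyclic cofibrations by left lifting against acyclic fibrations and fibrations (and the dual characterizations of fibrations), each assertion of (2) corresponds to exactly one assertion of (1). So I will focus on proving (1).

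That $f \pp g$ is a monomorphism whenever $f$ and $g$ are is a general fact about presheaf categories: limits and colimits in $\cSet$ are computed pointwise, so it reduces to the observation that in $\Set$ the pushout product of two injections is an injection, which is a direct diagram chase.

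For the acyclic statement in (1), I argue by cofibrant generation. Since $\uvar \pp \uvar$ preserves colimits separately in each variable, it suffices to verify the pushout product property on generators. The generating cofibrations are the boundary inclusions $\bd \cube{m} \ito \cube{m}$, and by the characterization of fibrations in \cref{thm:kan_fib} the open box inclusions $\dfobox \ito \cube{n}$ form a generating set of acyclic cofibrations. The preceding example factors $\dfobox \ito \cube{n}$ as a triple pushout product of boundary inclusions with an endpoint inclusion $\{1 - \varepsilon\} \ito \cube{1}$. Using associativity of $\pp$ together with the identity $(\bd\cube{m} \ito \cube{m}) \pp (\bd\cube{k} \ito \cube{k}) = (\bd\cube{m+k} \ito \cube{m+k})$, the pushout product of any boundary inclusion with an open box inclusion rewrites as an open box inclusion into a higher-dimensional cube, and hence is an acyclic cofibration.

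The main obstacle is the explicit combinatorial decomposition of the generating acyclic cofibrations as pushout products, which is what allows the associativity argument to reduce everything to the straightforward identity for pushout products of boundary inclusions. Once this decomposition is in hand, the remainder is a routine cofibrant generation argument, carried out in detail in \cite{doherty-kapulkin-lindsey-sattler}, from which the proposition can also be cited directly.
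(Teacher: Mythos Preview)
Your proposal is correct and follows the same logical structure as the paper: reduce (2) to (1) via the pushout-product/pullback-exponential adjunction (the paper cites \cite{riehl-verity} for this), and obtain (1) from \cite{doherty-kapulkin-lindsey-sattler}. The only difference is that the paper simply cites \cite[Lem.~2.9]{doherty-kapulkin-lindsey-sattler} for (1), whereas you sketch its content---the cofibrant-generation argument using the decomposition of open box inclusions as iterated pushout products---so your write-up is a faithful expansion of the paper's citation rather than a different approach.
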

\begin{proof}
    The first statement is proven in \cite[Lem.~2.9]{doherty-kapulkin-lindsey-sattler}.
    \cite[Obs.~4.11]{riehl-verity} notes the second statement is equivalent to the first by the adjunction in \cite[Lem.~4.10]{riehl-verity}.
\end{proof}
From this, we derive the following technical corollaries.
\begin{corollary} \label{thm:htpy_bd}
    Let $A \ito B$ be a monomorphism and $X$ be a Kan complex.
    Given a homotopy $H \from A \gprod \cube{1} \to X$ between maps $f, g \from A \to X$, the map $f$ has a lift $\overline{f} \from B \to X$ if and only if $g$ does.
\end{corollary}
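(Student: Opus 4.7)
The plan is to reduce this to a lifting problem against the Kan condition on $X$, using the pushout product with an endpoint inclusion of $\cube{1}$. By symmetry, it suffices to prove one direction: assuming a lift $\bar{f} \from B \to X$ of $f$ exists, I will produce a lift $\bar{g} \from B \to X$ of $g$.

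First, I will observe that the endpoint inclusion $\{0\} \ito \cube{1}$ is an acyclic cofibration. It is a monomorphism, and its underlying map is $\face{}{1,0} \from \cube{0} \to \cube{1}$, which by the remark following the statement that $\degen{}{n}$ is a homotopy equivalence is itself a homotopy equivalence, hence a weak equivalence in the model structure of \cref{thm:kan_fib}. Applying \cref{thm:pp_anodyne} to the cofibrations $A \ito B$ and $\{0\} \ito \cube{1}$ then yields an acyclic cofibration
\[ A \gprod \cube{1} \cup_{A \gprod \{0\}} B \gprod \{0\} \ito B \gprod \cube{1}. \]

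Next, I will combine $H$ with $\bar{f}$. The maps $H \from A \gprod \cube{1} \to X$ and $\bar{f} \from B \gprod \{0\} \to X$ agree on $A \gprod \{0\}$, since both restrict to $f$ there, so they assemble into a map from the pushout above into $X$. Since $X$ is Kan, the terminal map $X \to \cube{0}$ is a fibration, so this map lifts along the acyclic cofibration above to a map $\tilde{H} \from B \gprod \cube{1} \to X$. I will take $\bar{g}$ to be the restriction of $\tilde{H}$ along $B \gprod \face{}{1,1} \from B \to B \gprod \cube{1}$; since $\tilde{H}$ extends $H$ on $A \gprod \cube{1}$, the restriction of $\bar{g}$ to $A$ is the restriction of $H$ along $A \gprod \face{}{1,1}$, which is $g$.

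The only step requiring any insight is recognizing this as a Leibniz (pushout product) lifting problem; once set up, everything follows mechanically from \cref{thm:pp_anodyne}. The converse direction is proved identically, using the inclusion $\{1\} \ito \cube{1}$ in place of $\{0\} \ito \cube{1}$.
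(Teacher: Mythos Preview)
Your proof is correct and follows essentially the same strategy as the paper: form the map $[H,\bar f]$ on the pushout $A \gprod \cube{1} \cup_{A \gprod \{0\}} B \gprod \{0\}$, invoke \cref{thm:pp_anodyne} to see that the inclusion into $B \gprod \cube{1}$ is an acyclic cofibration, and lift against the Kan complex $X$. The only cosmetic differences are that the paper observes $\{0\} \ito \cube{1}$ is anodyne directly (it is the open box inclusion $\obox{1}{1,1} \ito \cube{1}$) rather than arguing via homotopy equivalence, and handles the converse by passing to the reverse homotopy rather than swapping the endpoint; both variants are equally valid.
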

\begin{proof}
    A lift $\overline{f} \from B \to X$ of $f$ gives a map $[H, \overline{f}] \from A \gprod \cube{1} \cup_{A \gprod \{ 0 \}} B \gprod \{ 0 \} \to X$.
    By \cref{thm:pp_anodyne}, the pushout product of $A \ito B$ and $\{ 0 \} \to \cube{1}$ is anodyne.
    As $X$ is Kan, $[H, \overline{f}]$ has a lift $B \gprod \cube{1} \to X$.
    \[ \begin{tikzcd}
        A \gprod \cube{1} \push_{A \gprod \{0\}} B \gprod \{ 0 \} \ar[d, hook, "\sim"'] \ar[r, "{[H, \overline{f}]}"] & X \\
        B \gprod \cube{1} \ar[ur, dotted]
    \end{tikzcd} \]
    The $\face{}{1,1}$-face of this lift is a lift of $g$.

    For the converse, we apply this argument to the symmetry homotopy from $g$ to $f$.
\end{proof}
\begin{corollary} \label{thm:htpy_obox_general}
    Let $X$ be a Kan complex, $A \ito B$ be an anodyne map, and $H \from A \gprod \cube{1} \to X$ be a homotopy between maps $f, g \from A \to X$.
    Given $\overline{f}, \overline{g} \from B \to X$ such that
    \begin{enumerate}
        \item $\restr{\overline{f}}{A} = f$
        \item $\restr{\overline{g}}{A} = g$,
    \end{enumerate} 
    there exists a homotopy $\overline{H} \from B \gprod \cube{1} \to X$ from $\overline{f}$ to $\overline{g}$ such that $\restr{H}{A \gprod \cube{1}} = H$.
\end{corollary}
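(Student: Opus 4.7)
The plan is to set up a single lifting problem whose solution is the desired homotopy $\overline{H}$, and then use \cref{thm:pp_anodyne} to reduce to the Kan condition on $X$.

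First I would assemble the given data into a map out of the pushout
\[ P := A \gprod \cube{1} \cup_{A \gprod \bd\cube{1}} B \gprod \bd\cube{1}. \]
The three pieces $H \from A \gprod \cube{1} \to X$, $\overline{f} \from B \gprod \{0\} \to X$, and $\overline{g} \from B \gprod \{1\} \to X$ agree on their shared subobjects $A \gprod \{0\}$ and $A \gprod \{1\}$: this is precisely the content of the hypotheses $\overline{f}i = f = H\face{}{2,0}$ (after identifying $A \gprod \{0\}$ with $A$) and $\overline{g}i = g = H\face{}{2,1}$. They therefore induce a unique map $P \to X$ whose restriction to $A \gprod \cube{1}$ is $H$.

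Next I would identify the inclusion $P \ito B \gprod \cube{1}$ as the pushout product of $i \from A \ito B$ with the boundary inclusion $\bd\cube{1} \ito \cube{1}$. Since $i$ is anodyne and $\bd\cube{1} \ito \cube{1}$ is a cofibration, \cref{thm:pp_anodyne}(1) gives that $P \ito B \gprod \cube{1}$ is an acyclic cofibration. The lifting problem
\[ \begin{tikzcd}
    P \ar[r] \ar[d, hook, "\sim"'] & X \\
    B \gprod \cube{1} \ar[ur, dotted, "\overline{H}"']
\end{tikzcd} \]
therefore admits a solution because $X$ is a Kan complex. By construction the resulting $\overline{H} \from B \gprod \cube{1} \to X$ restricts to $\overline{f}$ on $B \gprod \{0\}$, to $\overline{g}$ on $B \gprod \{1\}$, and to $H$ on $A \gprod \cube{1}$, as required.

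There is no real obstacle here: the argument is essentially the same pushout-product trick already used in the proof of \cref{thm:htpy_bd}, only applied to the boundary $\bd\cube{1} \ito \cube{1}$ rather than an endpoint inclusion. The only point to check carefully is that the three given maps actually glue on the pushout, which is immediate from the two commutativity hypotheses.
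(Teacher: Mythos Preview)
Your proof is correct and takes essentially the same approach as the paper: assemble $H$, $\overline{f}$, $\overline{g}$ into a map out of the pushout-product domain $A \gprod \cube{1} \cup_{A \gprod \bd\cube{1}} B \gprod \bd\cube{1}$, invoke \cref{thm:pp_anodyne} to see that the inclusion into $B \gprod \cube{1}$ is anodyne, and lift against the Kan complex $X$. The only difference is that you spell out the gluing verification more explicitly than the paper does.
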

\begin{proof}
    The pushout product $B \gprod \bd \cube{1} \push_{A \gprod \bd \cube{1}} A \gprod \cube{1} \to B \gprod \cube{1}$ of $\bd \cube{1} \ito \cube{1}$ and $A \ito B$ is anodyne by \cref{thm:pp_anodyne}.
    As $X$ is Kan, the map $[[\overline{f}, \overline{g}], H] \from B \gprod \bd \cube{1} \push_{(A \gprod \bd \cube{1})} A \gprod \cube{1} \to X$ has a lift $B \gprod \cube{1} \to X$.
    \[ \begin{tikzcd}
        B \gprod \bd \cube{1} \push_{(A \gprod \bd \cube{1})} A \gprod \cube{1} \ar[r, "{[[\overline{f}, \overline{g}], H]}"] \ar[d, hook, "\sim"'] & X \\
        B \gprod \cube{1} \ar[ur, dotted] & 
    \end{tikzcd} \]
    This gives the desired homotopy $\overline{H} \from B \gprod \cube{1} \to X$.
\end{proof}
\begin{corollary} \label{thm:htpy_obox}
    Let $f \from \dfobox \to X$ be a map into a Kan complex $X$.
    For any two fillers $g, h \from \cube{n} \to X$ of $f$, we have a homotopy $H \from \cube{n-1} \gprod \cube{1} \to X$ from $g\face{}{i,\varepsilon}$ to $h\face{}{i,\varepsilon}$ such that $\restr{H}{\bd \cube{n-1} \gprod \cube{1}} = f\face{}{i,\varepsilon} \gprod \degen{}{1}$.
\end{corollary}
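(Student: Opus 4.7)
The plan is to first construct an extended homotopy $\tilde{H} \from \cube{n} \gprod \cube{1} \to X$ from $g$ to $h$ that is degenerate on the open box, and then obtain $H$ by restricting along the missing face.

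To construct $\tilde{H}$, I would apply \cref{thm:htpy_obox_general} to the open box inclusion $\dfobox \ito \cube{n}$, which is anodyne by definition. The input homotopy is the degenerate cylinder $f \gprod \degen{}{1} \from \dfobox \gprod \cube{1} \to X$, regarded as a homotopy from $f$ to itself, and the extensions to $\cube{n}$ are precisely the two given fillers $g$ and $h$ (which satisfy $g i = f = h i$ for the inclusion $i \from \dfobox \ito \cube{n}$ by hypothesis). This yields a homotopy $\tilde{H} \from \cube{n} \gprod \cube{1} \to X$ from $g$ to $h$ with $\restr{\tilde{H}}{\dfobox \gprod \cube{1}} = f \gprod \degen{}{1}$.

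I would then set $H \coloneqq \tilde{H} \circ (\face{}{i,\varepsilon} \gprod \cube{1}) \from \cube{n-1} \gprod \cube{1} \to X$ and verify the required properties. The $\cube{n-1} \gprod \{0\}$ and $\cube{n-1} \gprod \{1\}$ faces recover $g\face{}{i,\varepsilon}$ and $h\face{}{i,\varepsilon}$ respectively. For the boundary condition, the key combinatorial observation is that $\face{}{i,\varepsilon} \from \cube{n-1} \to \cube{n}$ sends $\bd\cube{n-1}$ into $\dfobox$, since the boundary of the missing face is the union of its intersections with all other codimension-one faces, each of which is contained in $\dfobox$. Consequently $\restr{H}{\bd\cube{n-1} \gprod \cube{1}}$ factors through $\dfobox \gprod \cube{1}$ where $\tilde{H}$ is the projection through $f$, and a direct calculation then identifies this restriction with $f\face{}{i,\varepsilon} \gprod \degen{}{1}$.

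The real work is already packaged into \cref{thm:htpy_obox_general} and \cref{thm:pp_anodyne}; the only obstacle in the present argument is the bookkeeping step of verifying that $\face{}{i,\varepsilon}$ carries the boundary of $\cube{n-1}$ into the open box $\dfobox$, which is immediate from the definition of $\dfobox$.
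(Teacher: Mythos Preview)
Your proof is correct and is essentially identical to the paper's: both apply \cref{thm:htpy_obox_general} to the anodyne inclusion $\dfobox \ito \cube{n}$ with the reflexivity homotopy $f \gprod \degen{}{1}$, then restrict the resulting homotopy to the $(i,\varepsilon)$-face. The only difference is that you spell out the verification that $\face{}{i,\varepsilon}$ carries $\bd\cube{n-1}$ into $\dfobox$, which the paper leaves implicit.
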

\begin{proof}
    By \cref{thm:htpy_obox_general}, we have a homotopy $\eta \from \cube{n} \gprod \cube{1} \to X$ from $g$ to $h$ which restricts to the reflexivity homotopy $f \gprod \degen{1} \from \dfobox \gprod \cube{1} \to X$.
    The $\face{}{i,\varepsilon}$-face of this homotopy is the desired homotopy.
\end{proof}
\cref{thm:pp_anodyne} also allows us to explicitly describe a factorization of the diagonal $X \to X \times X$ in $\Kan$.
\begin{proposition}
    For a Kan complex $X$, the triangle
    \[ \begin{tikzcd}
        {} & \rhom(\cube{1}, X) \ar[dr, "({\face{*}{1,0}, \face{*}{1,1}})"] & {} \\
        X \ar[rr] \ar[ur, "{\degen{*}{1}}"] && X \times X
    \end{tikzcd} \]
    gives a factorization of the diagonal map $X \to X \times X$ as a weak equivalence followed by a fibration.
\end{proposition}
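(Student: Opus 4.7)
The plan is to verify the three assertions hidden in the statement: that the triangle commutes, that the right leg is a fibration, and that the left leg is a weak equivalence. For commutativity, I would simply note that $\face{*}{1,\varepsilon} \degen{*}{1} = (\degen{}{1}\face{}{1,\varepsilon})^* = \id^*$ by the cubical identity $\degen{}{1}\face{}{1,\varepsilon} = \id$, for $\varepsilon = 0, 1$, so the composite $X \to \rhom(\cube{1}, X) \to X \times X$ is indeed the diagonal.

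For the right leg, I would cite \cref{pbexp-examples}, which identifies $(\face{*}{1,0}, \face{*}{1,1})$ with the pullback exponential of $\bd \cube{1} \ito \cube{1}$ along $X \to \cube{0}$. Since $\bd \cube{1} \ito \cube{1}$ is a cofibration and $X \to \cube{0}$ is a fibration (because $X$ is Kan), \cref{thm:pp_anodyne}(2) immediately yields that this pullback exponential is a fibration.

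For the left leg, rather than building a homotopy inverse directly, I would deduce the result by a 2-out-of-3 argument. By \cref{pbexp-examples}, the map $\face{*}{1,0} \from \rhom(\cube{1}, X) \to X$ is the pullback exponential of $\{0\} \ito \cube{1}$ along $X \to \cube{0}$. The key observation is that, since $\bd \cube{1} = \{0\} \sqcup \{1\}$, the inclusion $\{0\} \ito \cube{1}$ is precisely the open box inclusion $\obox{1}{1,1} \ito \cube{1}$, and so is anodyne. Then \cref{thm:pp_anodyne}(2) gives that $\face{*}{1,0}$ is an acyclic fibration. Combined with the identity $\face{*}{1,0}\degen{*}{1} = \id$ established in the first step, the 2-out-of-3 property of weak equivalences forces $\degen{*}{1}$ to be a weak equivalence as well.

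There is no substantive obstacle here: every step is a direct application of the tools already assembled in the section. The only slight subtlety is the identification of $\{0\} \ito \cube{1}$ with an open box inclusion, which sidesteps the more technical alternative of constructing a homotopy inverse to $\degen{*}{1}$ from the connection $\conn{}{1,0}$.
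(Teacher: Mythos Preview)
Your proposal is correct and follows essentially the same approach as the paper: both verify commutativity via the cubical identity $\degen{}{1}\face{}{1,\varepsilon} = \id$, identify $(\face{*}{1,0}, \face{*}{1,1})$ as a fibration via the pullback exponential of $\bd\cube{1} \ito \cube{1}$, show $\face{*}{1,0}$ is an acyclic fibration via the pullback exponential of the anodyne map $\{0\} \ito \cube{1}$, and conclude that $\degen{*}{1}$ is a weak equivalence by 2-out-of-3. The only cosmetic difference is that the paper mentions both $\face{*}{1,0}$ and $\face{*}{1,1}$ are acyclic fibrations, whereas you (correctly) observe that one suffices.
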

\begin{proof}
    The commutative diagram of maps
    \[ \begin{tikzcd}
        \cube{0} \ar[d, "{\face{}{1,0}}"'] \ar[rd, "{\id[\cube{0}]}"] \\
        \cube{1} \ar[r, "{\degen{}{1}}"] & \cube{0} \\
        \cube{0} \ar[u, "{\face{}{1,1}}"] \ar[ur, "{\id[\cube{0}]}"']
    \end{tikzcd} \]
    induces a commutative diagram of maps
    \[ \begin{tikzcd}
        {} & X \\
        X \ar[r, "{\degen{*}{1}}" near end] \ar[ur, "{\id[X]}"] \ar[rd, "{\id[X]}"'] & \rhom(\cube{1}, X) \ar[u, "{\face{*}{1,0}}"'] \ar[d, "{\face{*}{1,1}}"] \\
        {} & X
    \end{tikzcd} \]
    The two vertical maps on the right are acyclic fibrations by \cref{thm:pp_anodyne}.
    By two-out-of-three, the middle horizontal map is a weak equivalence.

    We have that the triangle
    \[ \begin{tikzcd}
        {} & \rhom(\cube{1}, X) \ar[dr, "({\face{*}{1,0}, \face{*}{1,1}})"] & {} \\
        X \ar[rr] \ar[ur, "{\degen{*}{1}}", "\sim"'] && X \times X
    \end{tikzcd} \]
    commutes.
    \cref{thm:pp_anodyne,pbexp-examples} show the right map is a fibration, hence giving the desired factorization.
\end{proof}

\subsection{Equivalence with topological spaces}
We show that the model structure on cubical sets is Quillen equivalent to the classical model structure on spaces.

The mapping $[1]^n \mapsto (\simp{1})^n$ gives a functor $\boxcat \to \sSet$.
The left Kan extension of this functor along the Yoneda embedding gives the \emph{triangulation} functor $T \from \cSet \to \sSet$.
\[ \begin{tikzcd}
    \boxcat \ar[r] \ar[d] & \sSet  \\
    \cSet \ar[ur, "T"', dotted] 
\end{tikzcd} \]
This functor has a right adjoint $U \from \sSet \to \cSet$ defined by
\[ (UX)_n := \sSet \left( (\simp{1})^n, X \right). \]

Similary, the topological cubes $[0, 1]^n$ assemble into a functor $\boxcat \to \Top$. 
The left Kan extension of this functor along the Yoneda embedding gives the \emph{cubical geometric realization} functor $\creali{\uvar} \from \cSet \to \Top$.
\[ \begin{tikzcd}
    \boxcat \ar[r] \ar[d] & \Top  \\
    \cSet \ar[ur, "\creali{\uvar}"', dotted] 
\end{tikzcd} \]
Its right adjoint is the \emph{cubical singular complex} functor $\cSing \from \Top \to \cSet$ defined by
\[ (\cSing X)_n := \Top \left( [0, 1]^n, X \right). \]
Note this mirrors the definition of the simplicial geometric realization functor and its right adjoint (see \cref{ex:geometric-realization}).

\begin{lemma} \label{thm:creali_to_sreali}
    For any cubical set $X$, there is a homeomorphism
    \[ \creali{X} \cong \sreali{T X} \]
    natural in $X$.
\end{lemma}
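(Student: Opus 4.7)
The plan is to exploit cocontinuity: both $\creali{\uvar}$ and $\sreali{T(\uvar)}$ are composites of left adjoints, so each preserves all colimits of cubical sets. Since every cubical set is canonically a colimit of representables via the coend formula $X \cong \int^{[1]^n \in \boxcat} X_n \cdot \cube{n}$, it suffices to exhibit a natural (in $\boxcat$) homeomorphism between the two cocubical objects $[1]^n \mapsto \creali{\cube{n}}$ and $[1]^n \mapsto \sreali{T\cube{n}}$ in $\Top$, and then left Kan extend.

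On representables, compute both sides. By definition of the cocubical object used to define $\creali{\uvar}$, we have $\creali{\cube{n}} = [0,1]^n$. On the other side, $T\cube{n} = (\simp{1})^n$ by definition of $T$ (the left Kan extension of $[1]^n \mapsto (\simp{1})^n$ applied to the representable), and so
\[ \sreali{T\cube{n}} = \sreali{(\simp{1})^n} \cong \sreali{\simp{1}}^n = [0,1]^n, \]
where the middle isomorphism uses that simplicial geometric realization into compactly generated spaces preserves finite products (this is the key input — it is the reason the paper's $\Top$ is compactly generated weakly Hausdorff). This yields a homeomorphism $\creali{\cube{n}} \cong \sreali{T\cube{n}}$.

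Naturality in $\boxcat$ amounts to checking that the four generating classes of maps (faces, degeneracies, positive and negative connections) act identically on both sides under the identification $[0,1]^n \cong \sreali{\simp{1}}^n$. On the cubical side, the structural maps of the cocubical object $\boxcat \to \Top$ are $[0,1]$-indexed inclusions, projections, and $\min$/$\max$ operations, by the very formulas that define $\boxcat$. On the simplicial side, a face $\partial^n_{i,\varepsilon}$ corresponds under $T$ to $(\simp{1})^{n-1} \to (\simp{1})^n$ plugging $\varepsilon$ into the $i$-th coordinate; a degeneracy to a projection; and the connections to maps $\simp{1} \times \simp{1} \to \simp{1}$ that realize to $\min$ or $\max$ on $[0,1] \times [0,1]$ (since $\sreali{\simp{1} \times \simp{1}} \cong [0,1]^2$ and the two face-gluing maps of $\simp{1}^2$ become, up to the standard subdivision, the $\min$/$\max$ operators). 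Hence the two cocubical objects in $\Top$ are naturally isomorphic.

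Applying left Kan extension along the Yoneda embedding $\boxcat \hookrightarrow \cSet$ to this natural isomorphism of cocubical objects produces a natural isomorphism $\creali{\uvar} \cong \sreali{T(\uvar)}$ of cocontinuous functors $\cSet \to \Top$, as required. The main obstacle is the verification that $\sreali{\uvar}$ preserves finite products and that the connection maps realize to $\min$ and $\max$; both facts are classical, with the product-preservation of simplicial realization being the reason to work in compactly generated spaces, and the connection identification following by direct inspection of the affine structure on $\sreali{\simp{1} \times \simp{1}}$.
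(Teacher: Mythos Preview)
Your proof is correct and takes essentially the same approach as the paper: both reduce to representables via cocontinuity of the two functors and then invoke product-preservation of simplicial geometric realization to obtain $\creali{\cube{n}} = [0,1]^n \cong \sreali{(\simp{1})^n} = \sreali{T\cube{n}}$. You are more explicit than the paper about naturality in $\boxcat$ (checking that faces, degeneracies, and connections realize compatibly), which the paper simply asserts.
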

\begin{proof}
    We have that $\sreali{T \uvar} \from \cSet \to \Top$ is a left adjoint as a composite of left adjoints.
    The cubical geometric realization $\creali{\uvar}$ is also a left adjoint.
    Thus, it suffices to show $\creali{[1]^n} \cong \sreali{T[1]^n}$ naturally in $[1]^n \in \cube{}$. 
    Observe that 
    \begin{align*}
        \creali{[1]^n} &= [0, 1]^n \\
        &= \sreali{\simp{1}}^n \\
        &\cong \sreali{(\simp{1})^n} \text{ by \cite[Lem.~3.1.8]{hovey}} \\
        &= \sreali{T[1]^n}.
    \end{align*}
\end{proof}
\begin{corollary} \label{thm:cset_top_equiv}
    The adjunction
    \[ \creali{\uvar} \from \cSet \rightleftarrows \Top : \! \cSing \]
    is a Quillen equivalence.
\end{corollary}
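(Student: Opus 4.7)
The plan is to bootstrap from the simplicial case, using the natural isomorphism $\creali{X} \cong \sreali{TX}$ established in \cref{thm:creali_to_sreali}. Taking right adjoints of both sides of this isomorphism, we obtain a natural isomorphism $\cSing \cong U \circ \sSing$, so the adjunction $\creali{\uvar} \adj \cSing$ factors as the composite of two adjunctions: the triangulation adjunction $T \adj U$ between cubical and simplicial sets, and the simplicial realization adjunction $\sreali{\uvar} \adj \sSing$.

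First, I would recall that the simplicial geometric realization adjunction $\sreali{\uvar} \adj \sSing$ is a Quillen equivalence by the example attributed earlier to \cite[Thm.~3.6.7]{hovey}. Second, I would invoke the Quillen equivalence between the cubical and simplicial model structures induced by the triangulation adjunction $T \adj U$; this is exactly \cite[Thm.~6.1]{doherty-kapulkin-lindsey-sattler} (or the analogous statement in the references cited for the cubical model structure). Third, I would conclude by observing that the composite of two Quillen equivalences is a Quillen equivalence. This is a standard fact, but in case one wants to verify it, one notes that left adjoints in a Quillen equivalence compose to a left Quillen functor (Quillen adjunctions compose), and the derived unit/counit condition for the composite follows from each factor's condition together with Ken Brown's lemma applied to $T$ and $\sSing$ preserving weak equivalences between cofibrant/fibrant objects.

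The main obstacle, such as it is, consists of correctly identifying that the composite adjunction is (naturally isomorphic to) the cubical realization–singular complex adjunction. This is precisely what \cref{thm:creali_to_sreali} supplies for the left adjoint, and uniqueness of right adjoints then forces $\cSing \cong U \sSing$. No new computations are required beyond this identification and the citation of existing Quillen equivalences.
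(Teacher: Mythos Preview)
Your proposal is correct and essentially identical to the paper's proof: both factor the cubical realization adjunction through triangulation and simplicial realization via \cref{thm:creali_to_sreali}, cite the Quillen equivalence $T \adj U$ from \cite{doherty-kapulkin-lindsey-sattler} (the paper cites Thm.~6.27 rather than 6.1) and $\sreali{\uvar} \adj \sSing$ from \cite[Thm.~3.6.7]{hovey}, and conclude by composing Quillen equivalences.
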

\begin{proof}
    The adjunction
    \[ T \from \cSet \rightleftarrows \sSet : \! U \] 
    is a Quillen equivalence by \cite[Thm.~6.26]{doherty-kapulkin-lindsey-sattler}.
    (Without connections, this result first appears in \cite[Prop.~8.4.30]{cisinski:presheaves}.)
    The adjunction
    \[ \sreali{\uvar} \from \sSet \rightleftarrows \Top : \! \sSing \] 
    is a Quillen equivalence by \cite[Thm.~3.6.7]{hovey}.
    Thus, the composite adjunction $\cSet \rightleftarrows \Top$ is a Quillen equivalence.
    \cref{thm:creali_to_sreali} shows this composite is exactly the desired adjunction
    \[ \creali{\uvar} \from \cSet \rightleftarrows \Top : \! \cSing. \]
\end{proof}
This equivalence ascends to an equivalence between pointed cubical sets and pointed spaces.
\begin{proposition} \label{thm:pcset_ptop_equiv}
    The adjunction
    \[ \creali{\uvar} \from \pt{\cSet} \rightleftarrows \pt{\Top} : \! \cSing \]
    is a Quillen equivalence.
\end{proposition}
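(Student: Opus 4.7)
The strategy is to lift the unpointed Quillen equivalence from Corollary~\ref{thm:cset_top_equiv} to the pointed slice model categories described in Example~\ref{slice_model_structure}. Since $\creali{\cube{0}} = [0,1]^0$ is a one-point space, the cubical geometric realization preserves the terminal object; combined with $\cSing(1) \cong \cube{0}$, this ensures that the unpointed adjunction descends to an adjunction $\creali{-} \dashv \cSing$ between $\pt{\cSet}$ and $\pt{\Top}$ where the functors act as $(X,x) \mapsto (\creali{X}, \creali{x})$ and $(Y,y) \mapsto (\cSing Y, \cSing y)$ and the adjunction bijection is inherited from the unpointed case.

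Next, I would verify that this lifted adjunction is Quillen. By Example~\ref{slice_model_structure}, the three classes of maps in $\pt{\cSet}$ and $\pt{\Top}$ are created by the forgetful projections to $\cSet$ and $\Top$. Since the unpointed $\creali{-}$ preserves (acyclic) cofibrations by Corollary~\ref{thm:cset_top_equiv}, so does the pointed version.

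For the Quillen equivalence conditions, I would use two uniformity facts: every object of $\cSet$ is cofibrant (cofibrations are monomorphisms, and the initial cubical set is empty), and every object of $\Top$ is fibrant in the Serre model structure. Consequently every object of $\pt{\cSet}$ is cofibrant and every object of $\pt{\Top}$ is fibrant, so the Quillen equivalence conditions must be checked at all pointed objects. Choose the functorial factorizations in $\pt{\cSet}$ and $\pt{\Top}$ so that they are lifts of the unpointed ones (e.g., apply the unpointed factorization and equip the intermediate object with the induced basepoint). Then the derived unit $(X,x) \to \cSing R\creali{(X,x)}$ and derived counit $\creali{Q\cSing (Y,y)} \to (Y,y)$ in $\pt{\cSet}$ and $\pt{\Top}$ have underlying maps in $\cSet$ and $\Top$ that coincide with the corresponding derived unit and counit from Corollary~\ref{thm:cset_top_equiv}, hence are weak equivalences in the unpointed categories. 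Since weak equivalences in the pointed model structures are created by the forgetful functors, the pointed derived unit and counit are weak equivalences.

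I do not expect any genuine obstacle — this is a routine verification that a Quillen equivalence passes to pointed slice categories when the left adjoint preserves the terminal object. The only minor point of care is to ensure that the lifted adjunction, functorial factorizations, and weak equivalences in the pointed setting are indeed reflected by (and determined by) the underlying unpointed data, so that the pointed statements reduce to the already-established unpointed ones.
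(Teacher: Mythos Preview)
Your argument is correct and is essentially the paper's proof unpacked by hand: the paper simply invokes \cite[Prop.~1.3.17]{hovey} (Quillen equivalences lift to pointed model categories) together with \cref{thm:cset_top_equiv}. One minor caution: cofibrancy of every object of $\pt{\cSet}$ does not follow from cofibrancy of every object of $\cSet$ as you suggest, but rather from the fact that any basepoint map $\cube{0} \to X$ is a monomorphism (automatic since $\cube{0}$ is the terminal presheaf); the conclusion you need is nonetheless true.
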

\begin{proof}
    Follows from \cite[Prop.~1.3.17]{hovey} and \cref{thm:cset_top_equiv}.   
\end{proof}
From this equivalence, we have that the unit is a homotopy equivalence on pointed Kan complexes.
\begin{corollary} \label{thm:unit_equiv}
    Let $(X,x)$ be a pointed Kan complex.
    The unit map $(X,x) \to (\cSing \creali{X} , x )$ is a pointed homotopy equivalence.
\end{corollary}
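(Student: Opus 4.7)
The plan is to combine the Quillen equivalence of \cref{thm:pcset_ptop_equiv} with \cref{thm:wequiv_is_htpy_equiv_ptd}. More precisely, if I can show that the unit $(X,x) \to (\cSing \creali{X}, x)$ is a weak equivalence between objects of $\pt{\cSet}$ that are both fibrant and cofibrant, then \cref{thm:wequiv_is_htpy_equiv_ptd} upgrades this to a pointed homotopy equivalence, which is exactly what we want.

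First I would check the three hypotheses. For cofibrancy: the initial object of $\pt{\cSet} = 1 \downarrow \cSet$ is the basepoint inclusion $\cube{0} \to \cube{0}$, and since cofibrations in $\pt{\cSet}$ are created by the forgetful functor, $(X,x)$ is cofibrant iff $x \from \cube{0} \to X$ is a monomorphism, which is automatic. The same argument shows $(\cSing \creali{X}, x)$ is cofibrant. For fibrancy: $(X,x)$ is fibrant by hypothesis; and $\cSing \creali{X}$ is a Kan complex because $\cSing$ is the right adjoint in a Quillen adjunction (\cref{thm:cset_top_equiv}) and every object of $\Top$ is fibrant, so it is fibrant in $\pt{\cSet}$ as well.

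Next I would show the unit is a weak equivalence. By the definition of a Quillen equivalence, for every cofibrant $(X,x) \in \pt{\cSet}$ the composite
\[ (X,x) \to (\cSing \creali{X}, x) \to (\cSing R \creali{X}, x) \]
is a weak equivalence, where $R$ denotes fibrant replacement in $\pt{\Top}$. But every pointed topological space is already fibrant, so we may take $R$ to be the identity functor (with trivial natural weak equivalence); hence the composite above is simply the unit map $(X,x) \to (\cSing \creali{X}, x)$. Thus the unit is itself a weak equivalence.

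Finally, applying \cref{thm:wequiv_is_htpy_equiv_ptd} to this weak equivalence between fibrant-cofibrant objects of $\pt{\cSet}$ yields the desired pointed homotopy equivalence. I do not anticipate a main obstacle here: the only subtle point is being careful that we are working in the pointed model category $\pt{\cSet}$, rather than unpointed $\cSet$, so that the resulting homotopy equivalence is pointed, but \cref{thm:wequiv_is_htpy_equiv_ptd} is precisely the pointed version of \cref{thm:wequiv_is_htpy_equiv} that handles this.
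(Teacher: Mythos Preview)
Your proposal is correct and follows essentially the same route as the paper's own proof, which simply cites \cref{thm:wequiv_is_htpy_equiv} and \cref{thm:pcset_ptop_equiv}; you have just spelled out the details of cofibrancy, fibrancy, and why the unit is a weak equivalence. The one place to be slightly more careful is the step ``we may take $R$ to be the identity functor'': the paper's definition of Quillen equivalence is stated for a fixed fibrant replacement $R$, so strictly speaking you should note that $\creali{X} \to R\creali{X}$ is a weak equivalence between fibrant objects, hence $\cSing$ preserves it, and conclude by 2-out-of-3 that the unit itself is a weak equivalence---but this is a cosmetic point, not a gap.
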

\begin{proof}
    Follows from \cref{thm:wequiv_is_htpy_equiv,thm:pcset_ptop_equiv}.
\end{proof}

\subsection{Relative cubical sets} \label{sec:rel-cset}
Let $\Mono{\cSet}$ denote the full subcategory of $\arr{\cSet}$ spanned by monomorphisms.
Explicitly, its objects are monomorphisms $A \ito X$ and a morphism from $A \ito X$ to $B \ito Y$ is a pair of maps $(f, g)$ which form a commutative square of the following form.
\[ \begin{tikzcd}
    A \ar[r, "g"] \ar[d, hook] & B \ar[d, hook] \\
    X \ar[r, "f"] & Y
\end{tikzcd} \]
We refer to such a morphism as a relative cubical map.
Observe $g$ is uniquely determined by $f$: if $h$ also forms a commutative square with $f$ then $g = h$ as the map $B \ito Y$ is monic.
By a slight abuse of notation, we denote an object $A \ito X$ of $\Mono{\cSet}$ by $(X, A)$, supressing the data of the map itself.
With this, we write a relative cubical map as $f \from (X, A) \to (Y, B)$; the bottom map $X \to Y$ is denoted by $f$ and the top map $A \to B$ is denoted by $\restr{f}{A}$.

\begin{remark}
 A reader familiar with the Reedy theory (see for instance \cite{riehl-verity}) may recognize that the category $\Mono{\cSet}$ is simply a subcategory of cofibrant objects in the Reedy model structure on $\arr{\cSet}$.
 We do not emphasize this view, since the Reedy theory plays no role in our development.
\end{remark}

\begin{remark}
    The mapping $X \mapsto (X, \varnothing)$ embeds $\cSet$ as a full subcategory of $\Mono{\cSet}$.
    As such, the definitions and results related to relative cubical sets generalize the analogous results about (absolute) cubical sets.
\end{remark}
We have a corresponding notion of homotopy between relative cubical maps.
\begin{definition}
    Let $f, g \from (X, A) \to (Y, B)$ be relative cubical maps.
    A \emph{relative homotopy} from $f$ to $g$ is a morphism $H \from (X \gprod \cube{1}, A \gprod \cube{1}) \to (Y, B)$ in $\Mono{\cSet}$ such that $H\face{}{1,0} = f$ and $H\face{}{1,1} = g$.
\end{definition}
\begin{proposition} \label{thm:rel_htpy_eq_rel}
    If $Y, B$ are Kan complexes then relative homotopy is an equivalence relation on relative cubical maps $(X, A) \to (Y, B)$.
\end{proposition}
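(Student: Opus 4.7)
The plan is to handle reflexivity, symmetry, and transitivity separately. Reflexivity is immediate: the map $f \degen{*}{1} \from X \gprod \cube{1} \to Y$ is a relative homotopy from $f$ to itself, since its restriction to $A \gprod \cube{1}$ equals $\restr{f}{A} \degen{*}{1}$ and thus factors through $B$.

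For symmetry and transitivity, the strategy is a standard open-box fill. Given a relative homotopy $H$ from $f$ to $g$, for symmetry I will fill the open box $\obox{2}{1,1} \ito \cube{2}$ (missing the right edge) with top edge $H$ and both the left and bottom edges equal to $f \degen{*}{1}$; the missing $\face{}{1,1}$-face of the filler is then a homotopy $\overline{H}$ from $g$ to $f$. Given relative homotopies $H_1$ from $f$ to $g$ and $H_2$ from $g$ to $h$, for transitivity I will fill $\obox{2}{2,1} \ito \cube{2}$ (missing the bottom edge) with top $H_1$, left $f \degen{*}{1}$, and right $H_2$; the missing $\face{}{2,1}$-face is then a homotopy from $f$ to $h$.

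To produce these as \emph{relative} homotopies, I will phrase each construction as a single lifting problem against a pushout product, rather than separately filling in $B$ and in $Y$ and trying to reconcile. For each case (with the appropriate $(i,\varepsilon)$), I consider the square
\[ \begin{tikzcd}
(A \gprod \cube{2}) \cup_{A \gprod \obox{2}{i,\varepsilon}} (X \gprod \obox{2}{i,\varepsilon}) \ar[d] \ar[r] & Y \\
X \gprod \cube{2} \ar[ur, dashed]
\end{tikzcd} \]
whose left leg is the pushout product $(A \ito X) \pp (\obox{2}{i,\varepsilon} \ito \cube{2})$, an acyclic cofibration by \cref{thm:pp_anodyne}. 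On $A \gprod \cube{2}$, the top map is a filler in $B$ of the analogous open box, produced using the Kan condition on $B$, composed with the inclusion $B \ito Y$; on $X \gprod \obox{2}{i,\varepsilon}$, it is the data described above. These two components agree on the intersection $A \gprod \obox{2}{i,\varepsilon}$ because the prescribed edges all restrict into $B$ on $A$, so they assemble into a map out of the pushout. Since $Y$ is Kan, a lift exists; the extracted face of this lift restricts on $A$ to the corresponding face of the filler chosen in $B$, and is therefore a relative homotopy.

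The main obstacle is the bookkeeping around the relative aspect: one must ensure that the $A$-side filler and the $X \gprod \obox{2}{i,\varepsilon}$-side data match on their overlap so that they glue into a well-defined map out of the pushout. Once the lifting problem is correctly assembled, the conclusion follows immediately from \cref{thm:pp_anodyne} together with the Kan conditions on $B$ and $Y$.
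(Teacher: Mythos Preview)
Your proposal is correct and follows essentially the same approach as the paper: first fill the appropriate open box in $B$ using that $B$ is Kan, then use the pushout product $(A \ito X) \pp (\obox{2}{i,\varepsilon} \ito \cube{2})$ together with the Kan condition on $Y$ to extend to $X \gprod \cube{2}$, and extract the missing face as the desired relative homotopy. The only cosmetic differences are that the paper chooses $\obox{2}{2,1}$ for symmetry (with faces $\face{}{1,0} = H$, $\face{}{1,1} = \face{}{2,0} = f \gprod \degen{}{1}$) rather than your $\obox{2}{1,1}$, and it states transitivity as analogous rather than writing it out.
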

\begin{proof}
    Given a relative cubical map $f \from (X, A) \to (Y, B)$, a homotopy from $f$ to $f$ is given by $f \gprod \degen{}{1} \from X \gprod \cube{1} \to Y$.

    Fix relative cubical maps $f, g \from (X, A) \to (Y, B)$ and a relative homotopy $H \from (X \gprod \cube{1}, A \gprod \cube{1}) \to (Y, B)$ from $f$ to $g$.
    We specify a map $A \gprod \obox{2}{2,1} \to B$ by the following assignment on faces. We may also specify this map by adjointness as a map $\obox{2}{2,1} \to \rhom(A, B)$.
    \[ \begin{array}{l l}
        A \gprod \face{}{1,0} := \restr{H}{A} & A \gprod \face{}{1,1} := \restr{f}{A} \gprod \degen{}{1} \\
        A \gprod \face{}{2,0} := \restr{f}{A} \gprod \degen{}{1}
    \end{array} \qquad \begin{tikzcd}
        \restr{f}{A} \ar[d, "{H}"'] \ar[r, equal] & \restr{f}{A} \ar[d, equal] \\
        \restr{g}{A} & \restr{f}{A}
    \end{tikzcd} \]
    By \cref{thm:pp_anodyne}, we have a lift $K \from A \gprod \cube{2} \to B$.
    \[ \begin{tikzcd}
        A \gprod \obox{2}{2,1} \ar[d] \ar[r] & B \\
        A \gprod \cube{2} \ar[ur, "K"', dotted]
    \end{tikzcd} \]
    The restriction $\restr{K}{A \gprod \face{}{2,1}} \from A \gprod \cube{1} \to B$ of $K$ to the bottom face is a homotopy from $\restr{g}{A}$ to $\restr{f}{A}$.
    We define a map $X \gprod \obox{2}{2,1} \to Y$ by the following faces.
    \[ \begin{array}{l l}
        X \gprod \face{}{1,0} := H & X \gprod \face{}{1,1} := X \gprod \degen{}{1} \\
        X \gprod \face{}{2,0} := X \gprod \degen{}{1}
    \end{array} \qquad \begin{tikzcd}
        f \ar[d, "H"'] \ar[r, equal] & f \ar[d, equal] \\
        g & f
    \end{tikzcd} \]
    From this map and the previous lift, we obtain a map $A \gprod \cube{2} \cup_{A \gprod \obox{2}{2,1}} X \gprod \obox{2}{2,1} \to Y$.
    \cref{thm:pp_anodyne} gives a lift of this map.
    \[ \begin{tikzcd}
        A \gprod \cube{2} \push_{A \gprod \obox{2}{2,1}} X \gprod \obox{2}{2,1} \ar[d, "(A \ito X) \pp (\obox{2}{2,1} \ito \cube{2})"'] \ar[r] & Y \\
        X \gprod \cube{2} \ar[ur, "K'"', dotted]
    \end{tikzcd} \]
    From this, the restriction $\restr{K'}{X \gprod \face{}{2,1}} \from X \gprod \cube{1} \to Y$ of $K'$ to the bottom face is a homotopy from $g$ to $f$ whose restriction to $A \gprod \face{}{2,1}$ is the homotopy $\restr{K}{A \gprod \face{}{2,1}}$ from $\restr{g}{A}$ to $\restr{f}{A}$.
    That is, it is a relative homotopy from $g$ to $f$.
    
    An analogous argument proves this relation is transitive.
\end{proof}
We write $f \rsim g$ if there is a relative homotopy from $f$ to $g$.
The following statement relates homotopies with composition of maps.
\begin{lemma} \label{thm:compose_rel_htpy}
    Suppose we have relative maps $f, g \from (X, A) \to (Y, B)$, $h, k \from (Y, B) \to (Z, C)$ and relative homotopies $H \from f \rsim g$ and $K \from h \rsim k$.
    Then we have homotopies \begin{enumerate}
        \item $hf \rsim hg$;
        \item $hf \rsim kf$.
    \end{enumerate}
    Moreover, if $C$ and $Z$ are Kan then
    \begin{enumerate}
        \item[3.] $hf \sim kg$
    \end{enumerate}
\end{lemma}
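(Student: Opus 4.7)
The plan is to handle the three statements in order, with (1) and (2) by direct construction of composite homotopies, and (3) by combining these via the transitivity afforded by \cref{thm:rel_htpy_eq_rel}.

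For (1), I would take the homotopy $H \from X \gprod \cube{1} \to Y$ and simply post-compose with $h$ to obtain $hH \from X \gprod \cube{1} \to Z$. Evaluating faces gives $(hH)\face{}{1,0} = h f$ and $(hH)\face{}{1,1} = h g$, so this is a homotopy from $hf$ to $hg$. For relativity, note that $\restr{H}{A \gprod \cube{1}}$ factors through $B$ by assumption, and $h$ restricts to a map $B \to C$, so the restriction of $hH$ to $A \gprod \cube{1}$ factors through $C$ as required.

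For (2), I would dually pre-compose the homotopy $K \from Y \gprod \cube{1} \to Z$ with $f \gprod \cube{1} \from X \gprod \cube{1} \to Y \gprod \cube{1}$ to obtain the composite $K \circ (f \gprod \cube{1}) \from X \gprod \cube{1} \to Z$. Evaluation on faces yields $hf$ and $kf$ respectively, and relativity follows since $\restr{f}{A}$ lands in $B$ and $\restr{K}{B \gprod \cube{1}}$ lands in $C$, so their composite on $A \gprod \cube{1}$ lands in $C$.

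For (3), combine (1) applied to $H$ and $h$ with (2) applied to $K$ and $g$: these give $hf \rsim hg$ and $hg \rsim kg$ respectively. Since $C$ and $Z$ are Kan, \cref{thm:rel_htpy_eq_rel} tells us that relative homotopy is an equivalence relation on relative maps $(X,A) \to (Z,C)$, so transitivity yields $hf \rsim kg$. There is no real obstacle here; the Kan hypothesis appears only in step (3), precisely to license the use of transitivity from \cref{thm:rel_htpy_eq_rel}, while (1) and (2) require no fibrancy since their homotopies are exhibited by direct composition rather than by any lifting argument.
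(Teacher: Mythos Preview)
Your proposal is correct and matches the paper's proof essentially verbatim: the paper also exhibits $hH$ for (1), $K \circ (f \gprod \cube{1})$ for (2), and invokes transitivity (via \cref{thm:rel_htpy_eq_rel}) for (3). Your write-up is slightly more explicit about why the constructed homotopies are relative and about which two homotopies are being chained in (3), but the argument is identical.
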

\begin{proof}
    For (1), the map $hH \from (X \gprod \cube{1}, A \gprod \cube{1}) \to (Z, C)$ is a relative homotopy from $hf$ to $hg$.
    
    For (2), the map $K \circ f \gprod \cube{1} \from (X \gprod \cube{1}, A \gprod \cube{1}) \to (Z, C)$ is a relative homotopy from $hf$ to $kf$.

    If $C, Z$ are Kan then (3) follows by transitivity.
\end{proof}
\begin{remark}
    This shows there is a well-defined homotopy category of relative Kan complexes whose objects are monomorphisms between Kan complexes and morphisms are relative homotopy classes of maps.
\end{remark}
The notion of relative homotopy naturally gives a notion of relative homotopy equivalence.
\begin{definition}
    A relative map $f \from (X, A) \to (Y, B)$ between Kan complexes is a \emph{relative homotopy equivalence} if there exists
    \begin{itemize}
        \item a relative map $g \from (Y, B) \to (X, A)$;
        \item a relative homotopy $gf \rsim \id[(X, A)]$;
        \item a relative homotopy $fg \rsim \id[(Y, B)]$.
    \end{itemize}
\end{definition}
\begin{proposition}
    Let $f \from (X, A) \to (Y, B)$ be a relative homotopy equivalence.
    For any relative Kan complex $(Z, C)$, we have
    \begin{enumerate}
        \item an isomorphism $f_* \from [(Z, C), (X, A)] \to [(Z, C), (Y, B)]$ 
        \item an isomorphism $f^* \from [(Y, B), (Z, C)] \to [(X, A), (Z, C)]$
    \end{enumerate}
\end{proposition}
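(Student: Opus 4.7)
The plan is to define the candidate inverses to $f_*$ and $f^*$ using the homotopy inverse $g$ guaranteed by the hypothesis that $f$ is a relative homotopy equivalence, and then verify that the resulting compositions agree with the identities on homotopy classes. Since $X, A, Y, B$ are all Kan complexes, \cref{thm:rel_htpy_eq_rel} ensures that relative homotopy is an equivalence relation on each of the relevant hom-sets, so the sets $[(Z,C), (X,A)]$, $[(Z,C), (Y,B)]$, $[(Y,B), (Z,C)]$, and $[(X,A), (Z,C)]$ are well defined, and by \cref{thm:compose_rel_htpy} pre- and post-composition descend to functions on these sets.

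For part (1), I would let $g \from (Y, B) \to (X, A)$ be a relative homotopy inverse to $f$ and define $g_* \from [(Z, C), (Y, B)] \to [(Z, C), (X, A)]$ by $g_* [h] := [g h]$; this is well-defined on homotopy classes by \cref{thm:compose_rel_htpy}(1). To see $g_* \circ f_* = \id$, take $h \from (Z, C) \to (X, A)$: composing the relative homotopy $gf \rsim \id_{(X, A)}$ with $h$ on the right gives, via \cref{thm:compose_rel_htpy}(2), a relative homotopy $gfh \rsim h$, so $g_* f_* [h] = [h]$. The opposite composite $f_* g_* = \id$ follows by the symmetric argument using $fg \rsim \id_{(Y, B)}$.

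Part (2) is essentially dual: define $g^* \from [(X, A), (Z, C)] \to [(Y, B), (Z, C)]$ by $g^* [h] := [h g]$, which is well-defined by \cref{thm:compose_rel_htpy}(2). For any $h \from (Y, B) \to (Z, C)$, post-composing the homotopy $f g \rsim \id_{(Y, B)}$ with $h$ yields, via \cref{thm:compose_rel_htpy}(1), a relative homotopy $h f g \rsim h$, so $f^* g^* [h] = [h]$; analogously $g^* f^* = \id$ via $g f \rsim \id_{(X, A)}$.

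No real obstacle is expected: the only nontrivial content is encapsulated in \cref{thm:rel_htpy_eq_rel} (transitivity/symmetry, which required the Kan condition) and in \cref{thm:compose_rel_htpy} (pre- and post-composition preserve relative homotopy). The argument here is formal once one has those, and closely mirrors the standard proof that a homotopy equivalence of spaces induces bijections on homotopy classes of maps.
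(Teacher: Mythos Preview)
Your proof is correct and is exactly the argument the paper has in mind: the paper's own proof is the one-liner ``This follows from \cref{thm:compose_rel_htpy},'' and you have simply unpacked that reference. One small bookkeeping slip in part~(2): for $h \from (Y,B) \to (Z,C)$ the computation $h(fg) \rsim h$ shows $g^* f^* = \id$, not $f^* g^* = \id$ as you wrote (and symmetrically for the other composite), but the mathematics is fine.
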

\begin{proof}
    This follows from \cref{thm:compose_rel_htpy}.
\end{proof}

\section{Homotopy groups of a cubical Kan complex} \label{sec:groups}

\subsection{Path components of a Kan complex}
Let $\pi_0 \from \cSet \to \Set$ denote the functor given by computing the colimit of $X$ when regarded as a diagram $\boxcat^{\op} \to \Set$. Note that if $X$ is a Kan complex, this gives the set of homotopy classes of maps $\cube{0} \to X$.
It is straightforward to show that this functor takes weak equivalences to bijections.
\begin{proposition} \label{thm:pi_0_we_to_iso}
    The functor $\pi_0 \from \Kan \to \Set$ takes weak equivalences to bijections. \noproof
\end{proposition}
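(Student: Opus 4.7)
The plan is to reduce the statement to \cref{thm:wequiv_is_htpy_equiv} by showing that $\pi_0$ is a homotopy-invariant functor. Observe first that every cubical set is cofibrant (cofibrations being monomorphisms, and $\varnothing \ito X$ is always a monomorphism), while Kan complexes are exactly the fibrant objects. Thus a weak equivalence $f \from X \to Y$ between Kan complexes is a weak equivalence between fibrant-cofibrant objects in $\cSet$, and by \cref{thm:wequiv_is_htpy_equiv} it admits a homotopy inverse $g \from Y \to X$ with $gf \sim \id[X]$ and $fg \sim \id[Y]$.

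The remaining step is to verify that $\pi_0$ sends homotopic maps to equal functions, from which it follows that $\pi_0 f$ and $\pi_0 g$ are mutually inverse and hence both bijections. Unpacking $\pi_0 X$ as the colimit of $X \from \boxcat^{\op} \to \Set$, every higher cube is identified with its 0-cube corners via iterated faces, so $\pi_0 X$ is a quotient of $X_0$; in particular, for any 1-cube $u \in X_1$ we have $[u\face{}{1,0}] = [u\face{}{1,1}]$ in $\pi_0 X$. Given an elementary homotopy $H \from X \gprod \cube{1} \to Y$ between $f, g \from X \to Y$, evaluating at a 0-cube $x \in X_0$ produces a 1-cube of $Y$ from $f(x)$ to $g(x)$, whence $[f(x)] = [g(x)]$ in $\pi_0 Y$. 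Since a general homotopy is a zig-zag of elementary ones, the same conclusion extends to all homotopic maps.

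The argument involves no substantive obstacle; its main content is the application of \cref{thm:wequiv_is_htpy_equiv} together with a straightforward verification of the homotopy invariance of $\pi_0$.
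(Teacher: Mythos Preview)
Your argument is correct. The paper gives no proof of this proposition (it is stated with \noproof after the remark that the claim is straightforward), so there is nothing to compare against; your route via \cref{thm:wequiv_is_htpy_equiv} together with the elementary verification that $\pi_0$ identifies homotopic maps is precisely the expected one.
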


\subsection{Loop space of a Kan complex}
Recall that the homotopy groups of a topological space may be defined as the path components of the $n$-th loop space.
We develop a similar definition below for cubical Kan complexes.

Note that many of our definitions and theorems make sense in a far greater generality than what is presented here.
For instance, the notion of a loop space makes sense and can be meaningfully studied in an arbitrary fibration category, cf.~\cite{cisinski:invariance}.

Define the \emph{right loop space} $\loopr \from \pt{\cSet} \to \pt{\cSet}$ of $(X,x)$ to be the pullback of $(x,x) \from \cube{0} \to X \times X$ along the map $({\face{*}{1,0}}, {\face{*}{1,1}}) \from \rhom(\cube{1},X) \to X \times X$.
\[ \begin{tikzcd}[column sep = large]
    {\loopr (X,x)} \ar[r] \ar[d] \ar[dr, phantom, "\ulcorner" very near start] & \cube{0} \ar[d, "{(x,x)}"] \\
    {\rhom(\cube{1},X)} \ar[r, "{({\face{^*}{1,0}}, {\face{^*}{1,1}})}"] & X \times X
\end{tikzcd} \]
We have a distinguished basepoint $x\degen{}{1} \from \cube{0} \to \loopr(X,x)$, thus $\loopr (X,x)$ is a pointed cubical set.
Given a pointed map $f \from (X,x) \to (Y,y)$, we have the following commutative square.
\[ \begin{tikzcd}[column sep = large]
    \loopr(X,x) \ar[r] \ar[d] & \cube{0} \ar[d, "{(x,x)}"] \\
    {\rhom(\cube{1},X)} \ar[d, "f_*"'] & X \times X \ar[d, "f \times f"] \\
    {\rhom(\cube{1},Y)} \ar[r, "{(\face{*}{1,0}, \face{*}{1,1})}"] & Y \times Y
\end{tikzcd} \]
This induces a map $\loopr f \from \loopr(X,x) \to \loopr(Y,y)$.
We will refer to the right loop space as simply the \emph{loop space} and write $\loopsp(X,x)$ and $\loopsp f$ for its action on objects and morphisms, respectively. 

We identify an $n$-cube $\cube{n} \to \loopsp(X,x)$ of the loop space with the map $\cube{n+1} \to X$ corresponding to the composite $\cube{n} \to \loopsp(X,x) \to \rhom(\cube{1},X)$.
Explicitly,
\begin{itemize}
    \item a 0-cube of $\loopsp (X,x)$ is a 1-cube of $X$ whose faces are both $x$.
    \item a 1-cube of $\loopsp(X,x)$ is a 2-cube of $X$ of the following form.
    \[ \begin{tikzcd}
        x \ar[r] \ar[d, equal] & x \ar[d, equal] \\
        x \ar[r] & x
    \end{tikzcd} \]
    \item an $n$-cube of $\loopsp(X,x)$ is an $(n+1)$-cube of $X$ whose $\face{}{1,0}$- and $\face{}{1,1}$-faces are degenerate at $x$.
\end{itemize}

One may analogously define a \emph{left loop space} $\loopl \from \pt{\cSet} \to \pt{\cSet}$ using the cubical set $\lhom(\cube{1},X)$.
Our choice to work with the right loop space rather than the left is dictated by our convention of tensoring on the right when defining homotopies (\cref{rem:htpy_def}).
More precisely, this definition gives that $n$-cubes of the loop space are homotopies between $(n-1)$-cubes, thus giving the natural map in \cref{thm:loop_space_is_cube_maps}.

\begin{proposition} \label{thm:loopsp_lim}
    The loop space functor $\loopsp \from \pt{\cSet} \to \pt{\cSet}$ preserves limits.
\end{proposition}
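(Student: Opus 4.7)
The plan is to express $\loopsp(X,x)$ as a pullback whose three vertices depend on $(X,x)$ through limit-preserving functors, and then invoke the standard fact that limits commute with limits.

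Since $\pt{\cSet} = \cube{0} \slice \cSet$, the forgetful functor $\pt{\cSet} \to \cSet$ creates limits, so it suffices to show that the underlying cubical set of $\loopsp$ applied to a limit is the limit of the underlying cubical sets, with matching basepoint. Fix a small diagram $F \from I \to \pt{\cSet}$ with $F(i) = (X_i, x_i)$ and write $(X, x) = \lim_i (X_i, x_i)$. By definition,
\[ \loopsp(X,x) = \cube{0} \times_{X \times X} \rhom(\cube{1}, X). \]

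Each vertex of this cospan is the value at $(X,x)$ of a limit-preserving functor: the constant functor with value $\cube{0}$ trivially; the square $X \mapsto X \times X$ because products (being themselves limits) preserve limits in each variable; and $X \mapsto \rhom(\cube{1}, X)$ because it is right adjoint to $\uvar \gprod \cube{1}$. Consequently, each of $\cube{0}$, $X \times X$, and $\rhom(\cube{1}, X)$ is canonically the $I$-indexed limit of the corresponding objects for the $(X_i, x_i)$, and the cospan maps $\cube{0} \to X \times X$ and $\rhom(\cube{1}, X) \to X \times X$ are those induced by the universal property from the corresponding cospan maps for each $(X_i, x_i)$. Since pullbacks commute with arbitrary limits, the pullback $\loopsp(X,x)$ is then canonically isomorphic to $\lim_i \loopsp(X_i, x_i)$ in $\cSet$, and a direct unwinding shows this isomorphism identifies the canonical basepoint $x \degen{}{1}$ on the left-hand side with the limit basepoint on the right.

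I do not anticipate a genuine obstacle: the argument is bookkeeping with the adjunction $\uvar \gprod \cube{1} \adj \rhom(\cube{1}, \uvar)$ and the commutation of limits with limits. The only care needed is the final basepoint verification, and this is immediate from the universal properties involved.
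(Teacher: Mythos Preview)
Your argument is correct and is essentially the same as the paper's proof, just spelled out in more detail: the paper simply remarks that $\rhom(\cube{1}, \uvar)$ preserves limits (as a right adjoint) and that limits commute with limits. Your explicit handling of the basepoint and the forgetful functor from $\pt{\cSet}$ is a welcome elaboration, but the underlying idea is identical.
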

\begin{proof}
    Follows since limits commute with limits and $\rhom(\cube{1}, \uvar) \from \cSet \to \cSet$ preserves limits (as it is a right adjoint).
\end{proof}

\begin{lemma} \label{thm:loopsp_fib}
    The loop space functor $\loopsp \from \pt{\cSet} \to \pt{\cSet}$ preserves fibrations and acyclic fibrations.
\end{lemma}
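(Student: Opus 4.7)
The plan is to reduce any lifting problem against $\loopsp f$ to a lifting problem against $f$ itself, via the tensor-hom adjunction with $\cube{1}$. Recall from the defining pullback that an $n$-cube of $\loopsp(X, x)$ is a map $\cube{n} \gprod \cube{1} \to X$ whose restrictions to $\cube{n} \gprod \face{}{1, 0}$ and $\cube{n} \gprod \face{}{1, 1}$ are both constant at $x$, and similarly for $\loopsp(Y, y)$.

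Suppose we are given a lifting problem
\[
\begin{tikzcd}
    A \ar[d, hook, "i"'] \ar[r] & \loopsp(X, x) \ar[d, "\loopsp f"] \\
    B \ar[r] & \loopsp(Y, y)
\end{tikzcd}
\]
in which $i$ is either an open box inclusion (to verify the fibration case) or a boundary inclusion $\bd \cube{n} \ito \cube{n}$ (to verify the acyclic fibration case). Transposing along $\adjunct{\uvar \gprod \cube{1}}{\rhom(\cube{1}, \uvar)}{\cSet}{\cSet}$ and invoking the universal property of the pushout, this square is equivalent to the lifting problem
\[
\begin{tikzcd}
    (B \gprod \bd \cube{1}) \push_{A \gprod \bd \cube{1}} (A \gprod \cube{1}) \ar[r] \ar[d] & X \ar[d, "f"] \\
    B \gprod \cube{1} \ar[r] & Y
\end{tikzcd}
\]
in which the top map is constantly $x$ on $B \gprod \face{}{1, \varepsilon}$ and the bottom map is constantly $y$ on $B \gprod \face{}{1, \varepsilon}$; these extensions are well-defined because the original data land in the loop space (so the given data on $A \gprod \face{}{1, \varepsilon}$ is already constant at $x$), and they are compatible along $f$ because $f$ is pointed.

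The left-hand inclusion is exactly the pushout product $i \pp (\bd \cube{1} \ito \cube{1})$, which by \cref{thm:pp_anodyne} is anodyne when $i$ is anodyne and a cofibration when $i$ is a cofibration. Consequently, if $f$ is a Kan fibration (resp.\ an acyclic Kan fibration), the transposed lifting problem admits a solution; by construction, this solution restricts on $B \gprod \face{}{1, \varepsilon}$ to the constant map at $x$, so it transposes back to a map $B \to \loopsp(X, x)$ filling the original square. I expect no substantive obstacle in carrying this out: the essential content is the Leibniz closure of \cref{thm:pp_anodyne}, and the only point requiring care is checking that the constant-at-$x$ extensions on $B \gprod \face{}{1, \varepsilon}$ glue with the given data on $A \gprod \face{}{1, \varepsilon}$, which is immediate from the defining property of $\loopsp$.
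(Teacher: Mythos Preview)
Your proof is correct and follows essentially the same approach as the paper: transpose the lifting problem along the $\cube{1}$-hom adjunction, extend by constant maps on the $\bd\cube{1}$ factor, and lift against $f$. One small slip: in the paper's conventions $\rhom(\cube{1},\uvar)$ is right adjoint to $\cube{1}\gprod\uvar$, so the $\cube{1}$ factor belongs on the left throughout; the paper then identifies the transposed domain explicitly as the open box $\obox{n+1}{i+1,\varepsilon}$ rather than packaging it as a pushout product and invoking \cref{thm:pp_anodyne}, but the content is identical.
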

\begin{proof}
    Let $f \from (X,x) \to (Y,y)$ be a fibration and
    \[ \begin{tikzcd}
        \dfobox \ar[r, "u"] \ar[d] & \loopsp (X,x) \ar[d, "\loopsp f"] \\
        \cube{n} \ar[r, "v"] & \loopsp (Y,y)
    \end{tikzcd} \]
    be a commutative square.
    By adjointness, the composite map $\dfobox \xrightarrow{u} \loopsp(X,x) \to \rhom(\cube{1}, X)$ corresponds to a map $\overline{u} \from \cube{1} \gprod \dfobox \to X$.
    By definition of the loop space, the faces of the restrictions $\restr{\overline{u}}{\{ 0 \} \gprod \dfobox}, \restr{\overline{u}}{\{ 1 \} \gprod \dfobox} \from \dfobox \to X$ are degenerate at $x$.
    Thus, we may extend $\overline{u}$ to a map $u' \from \obox{n+1}{i+1,\varepsilon} \to X$ defined by the face assignment
    \[ u'\face{}{j,\delta} := \begin{cases}
        x\degen{1}{1} \dots \degen{n}{1} & j = 1 \\
        \restr{\overline{u}}{\cube{1} \gprod \face{}{j-1,\delta}} & \text{otherwise}.
    \end{cases} \]
    By constuction, this map fits into a commutative square
    \[ \begin{tikzcd}
        \obox{n+1}{i+1,\varepsilon} \ar[r, "u'"] \ar[d] & X \ar[d, "f"] \\
        \cube{n+1} \ar[r, "\overline{v}"] & Y
    \end{tikzcd} \]
    where $\overline{v} \from \cube{n+1} \to Y$ is the map corresponding to the composite $\cube{n} \xrightarrow{v} \loopsp(Y,y) \to \rhom(\cube{1}, Y)$.
    As $f$ is a fibration, this square has a lift $\cube{n+1} \to X$.
    The $\face{}{n+1,0}$- and $\face{}{n+1,1}$-faces of this lift are degenerate at $x$, thus giving a map $\cube{n} \to \loopsp(X,x)$ which lifts the starting square by construction.
    Thus, $\loopsp f$ is a fibration.

    The case of acyclic fibrations follows by an analogous argument.
\end{proof}
\begin{corollary} \label{thm:loopsp_kan}
    The loop space $\loopsp(X,x)$ of a pointed Kan complex $(X,x)$ is Kan. \noproof
\end{corollary}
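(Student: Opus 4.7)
The plan is to deduce this directly from \cref{thm:loopsp_fib} by applying it to the canonical map to the terminal object. Since $(X,x)$ is a pointed Kan complex, the structure map $f \from (X,x) \to (\cube{0}, *)$ is a fibration in the pointed category. By \cref{thm:loopsp_fib}, the induced map $\loopsp f \from \loopsp(X,x) \to \loopsp(\cube{0},*)$ is then a fibration as well.

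It remains only to identify the codomain. Since $\cube{0}$ is terminal in $\cSet$, we have $\rhom(\cube{1}, \cube{0}) \iso \cube{0}$ and $\cube{0} \times \cube{0} \iso \cube{0}$, so the defining pullback square for $\loopsp(\cube{0},*)$ is a pullback of identity maps on $\cube{0}$; in particular $\loopsp(\cube{0},*) \iso \cube{0}$. Alternatively, this is an instance of \cref{thm:loopsp_lim}: the loop space functor preserves the terminal object as it preserves limits. Either way, the map $\loopsp(X,x) \to \cube{0}$ is a fibration, which is precisely the condition that $\loopsp(X,x)$ be a Kan complex. There is no real obstacle here; the content is entirely packaged in the preceding lemma.
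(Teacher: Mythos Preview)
Your argument is correct and is exactly the intended one: the paper states this as an immediate corollary of \cref{thm:loopsp_fib} (note the \noproof), and applying that lemma to the terminal map together with the observation that $\loopsp(\cube{0},*)\iso\cube{0}$ is precisely the content.
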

By \cref{thm:loopsp_kan}, the restriction of the loop space to pointed Kan complexes gives a functor $\loopsp \from \pt{\Kan} \to \pt{\Kan}$.
\cref{thm:loopsp_fib} and Ken Brown's lemma \cite[Lem.~1.1.12]{hovey} show that this restriction preserves weak equivalences.
\begin{corollary} \label{thm:loopsp_we}
    The loop space functor $\loopsp \from \pt{\Kan} \to \pt{\Kan}$ preserves weak equivalences between pointed Kan complexes. \noproof
\end{corollary}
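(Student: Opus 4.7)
The statement is explicitly flagged as following from \cref{thm:loopsp_fib} together with Ken Brown's lemma, so the ``proof'' is essentially a direct invocation of these two results. Still, to make the plan transparent: recall that Ken Brown's lemma (in its dual form, as stated in \cite[Lem.~1.1.12]{hovey}) says that any functor out of a model category which sends acyclic fibrations between fibrant objects to weak equivalences automatically sends all weak equivalences between fibrant objects to weak equivalences.

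The plan is to instantiate this lemma with $\loopsp \from \pt{\Kan} \to \pt{\Kan}$, viewed as a functor defined on the fibrant objects of the pointed model structure $1 \downarrow \cSet$ from \cref{slice_model_structure}. First I would observe that the fibrant objects of $\pt{\cSet}$ are precisely the pointed Kan complexes, since in the slice model structure fibrations are created by the forgetful functor to $\cSet$. Thus every object of $\pt{\Kan}$ is fibrant, and the hypothesis of Ken Brown's lemma needs only to be checked on acyclic fibrations in $\pt{\Kan}$. But \cref{thm:loopsp_fib} already shows that $\loopsp$ sends acyclic fibrations to acyclic fibrations, which are in particular weak equivalences.

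Having verified the hypothesis, Ken Brown's lemma applied to $\loopsp$ immediately yields that $\loopsp$ sends every weak equivalence between pointed Kan complexes to a weak equivalence, which is the claim. There is no real obstacle here: the content lies entirely in \cref{thm:loopsp_fib} (which was the preceding lemma), and this corollary is simply packaging that content together with a general principle of abstract homotopy theory.
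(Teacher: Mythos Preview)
Your proposal is correct and matches the paper's approach exactly: the paper explicitly derives this corollary from \cref{thm:loopsp_fib} together with Ken Brown's lemma \cite[Lem.~1.1.12]{hovey}, and your expansion of why the hypotheses of Ken Brown's lemma are satisfied (all objects of $\pt{\Kan}$ are fibrant in $\pt{\cSet}$, and $\loopsp$ preserves acyclic fibrations) is accurate.
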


\begin{theorem} \label{thm:loopsp_exact}
    The loop space functor $\loopsp \from \pt{\Kan} \to \pt{\Kan}$ is exact.
\end{theorem}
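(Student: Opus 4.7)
The plan is to verify the four defining conditions of exactness directly by assembling the lemmas established earlier in the section. Recall that a functor between fibration categories is exact if it preserves (i) fibrations, (ii) acyclic fibrations, (iii) pullbacks along fibrations, and (iv) the terminal object.

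First, I would dispatch conditions (i) and (ii) by appealing to \cref{thm:loopsp_fib}, which already shows that $\loopsp$ preserves both fibrations and acyclic fibrations in $\pt{\cSet}$; the same statement restricts to $\pt{\Kan}$ since \cref{thm:loopsp_kan} guarantees that the loop space of a pointed Kan complex is again Kan.

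Next, I would handle (iii) and (iv) simultaneously using \cref{thm:loopsp_lim}. That proposition asserts that $\loopsp \from \pt{\cSet} \to \pt{\cSet}$ preserves all limits, and in particular it preserves pullbacks along fibrations. For the terminal object, the terminal in $\pt{\Kan}$ is $(\cube{0}, \id[\cube{0}])$, and limit preservation forces $\loopsp(\cube{0}, \id[\cube{0}])$ to be terminal as well. One can see this concretely from the defining pullback square, since $\rhom(\cube{1}, \cube{0}) \iso \cube{0}$ and the diagonal $\cube{0} \to \cube{0} \times \cube{0}$ is an isomorphism, so the pullback collapses to $\cube{0}$.

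There is no real obstacle here: the substantive work has already been carried out in \cref{thm:loopsp_lim,thm:loopsp_fib,thm:loopsp_kan}. The only thing to be careful about is that these results are stated for $\pt{\cSet}$ rather than for $\pt{\Kan}$, so I would note that the fibration category structure on $\pt{\Kan}$ is inherited from the full subcategory of fibrant objects as in \cref{ex:model_cat_fib_cat}, and that fibrations, acyclic fibrations, and pullbacks along fibrations in $\pt{\Kan}$ coincide with those in $\pt{\cSet}$, so the cited preservation statements transfer without modification.
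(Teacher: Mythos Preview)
Your proposal is correct and follows exactly the same route as the paper: invoke \cref{thm:loopsp_fib} for preservation of (acyclic) fibrations and \cref{thm:loopsp_lim} for preservation of pullbacks and the terminal object. Your additional remarks about restricting from $\pt{\cSet}$ to $\pt{\Kan}$ and the explicit check of the terminal object are fine elaborations but not strictly needed.
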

\begin{proof}
    By \cref{thm:loopsp_fib}, it preserves fibrations and acyclic fibrations.
    From \cref{thm:loopsp_lim}, we have that it preserves pullbacks along fibrations and the terminal object.
\end{proof}

We recall the definition of the double mapping path space of a co-span.

\begin{definition}
    Given cubical maps $f \from X \to Z$ and $g \from Y \to Z$ between Kan complexes, the \emph{double mapping path space} of $f$ and $g$ is the pullback of $f \times g \from X \times Y \to Z \times Z$ along the pre-composition map $(\face{*}{1,0}, \face{*}{1,1}) \from \rhom(\cube{1}, Z) \to Z \times Z$, denoted $P(f, g)$.
    \[ \begin{tikzcd}
        P(f, g) \ar[r] \ar[d] \ar[rd, phantom, "\ulcorner" very near start] & \rhom(\cube{1}, Z) \ar[d, "{(\face{*}{1,0}, \face{*}{1,1})}"] \\
        X \times Y \ar[r, "f \times g"] & Z \times Z
    \end{tikzcd} \]
\end{definition}

The double mapping path space gives an explicit construction of the homotopy pullback of two cubical maps in the sense of \cref{def:weak-hopb}.

\begin{proposition}
    The loop space $\loopsp(X,x)$ of a pointed Kan complex $(X,x)$ is the double mapping path space, and hence the homotopy pullback, of the co-span $(x \from \cube{0} \to X, x \from \cube{0} \to X)$. \qed
\end{proposition}

We show that the loop space functor preserves homotopy pullbacks.

\begin{theorem} \label{thm:loopsp_preserves_hopb}
    The loop space functor $\loopsp \from \pt{\Kan} \to \pt{\Kan}$ preserves homotopy pullbacks (in the sense of \cref{def:weak-hopb}).
\end{theorem}

\begin{proof}
    Suppose
    \[ \begin{tikzcd}
        P \ar[r] \ar[d] & A \ar[d, "f" ,""{name=s, left}] \\
        B \ar[r, "g"', ""{name=t, above}] & C \ar[from=s, to=t, bend right, "H"']
    \end{tikzcd} \]
    is a homotopy pullback.
    The lower square in
    \[ \begin{tikzcd}[sep = small]
        \loopsp P \ar[rrd, bend left] \ar[ddr, bend right] \ar[rd, dotted] & {} & {} \\
        {} & \loopsp P(f, g) \ar[r] \ar[d] & \loopsp (\rpath{C}) \ar[d] \\
        {} & \loopsp (A \times B) \ar[r] & \loopsp (C \times C)
    \end{tikzcd} \]
    is a pullback by \cref{thm:loopsp_lim}.
    The loop space functor preserves products, also by \cref{thm:loopsp_lim}.
    One verifies that $\loopsp (\rpath{C}) \cong \rpath{\loopsp C}$, thus $\loopsp P(f, g) \cong P(\loopsp f, \loopsp g)$ and $\loopsp P \to \loopsp P(f, g) \xrightarrow{\cong} P(\loopsp f, \loopsp g)$ is the canonical map for the following homotopy-commutative square.
    \[ \begin{tikzcd}
        \loopsp P \ar[r] \ar[d] & \loopsp A \ar[d, "\loopsp f" ,""{name=s, left}] \\
        \loopsp B \ar[r, "\loopsp g"', ""{name=t, above}] & \loopsp C \ar[from=s, to=t, bend right, "\loopsp H"']
    \end{tikzcd} \]
    This map is a weak equivalence by \cref{thm:loopsp_we}.
\end{proof}

\subsection{Fundamental group of a Kan complex}
\begin{definition}
    Let $(X,x)$ be a pointed Kan complex.
    The \emph{fundamental group} $\pi_1(X,x)$ of $(X,x)$ is the set of path components of the cubical set $\loopsp (X,x)$.
\end{definition}
We write a representative of $\pi_1(X,x)$ by a 0-cube of $\loopsp(X,x)$; that is, a 1-cube of $X$.

For a pointed Kan complex $(X,x)$, we define a multiplication on $\pi_1(X,x)$ as follows.
Given $f, g \from \cube{0} \to \loopsp(X,x)$, we construct a map $\obox{2}{2,0} \to X$ by the following diagram.
\[ \begin{tikzcd}
    x \ar[d, "f"'] & x \ar[d, equal] \\
    x \ar[r, "g"] & x
\end{tikzcd} \]
A \emph{concatenation square} for $f$ and $g$ is a filler $\langle f, g \rangle \from \cube{2} \to X$ for this open box.
A \emph{concatenation} of $f$ and $g$ is a map $fg \from \cube{1} \to X$ which is the $\face{}{2,0}$-face of some concatenation square for $f$ and $g$.
As $X$ is Kan, a concatenation square for $f$ and $g$ always exists.
\cref{thm:htpy_obox} shows concatenation is unique up to a homotopy which is relative to the boundary.
That is, this gives a well-defined binary operation $\pi_1(X,x) \times \pi_1(X,x) \to \pi_1(X,x)$.
\begin{theorem} \label{pi-1-is-a-group}
    Let $(X,x)$ be a pointed Kan complex.
    The multiplication defined above gives a group structure on $\pi_1(X,x)$.
\end{theorem}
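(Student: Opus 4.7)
The plan is to verify the three group axioms: existence of a two-sided identity, existence of inverses, and associativity. The identity is $[x\degen{}{1}]$. The 2-cube $f\conn{}{1,0}$ obtained by applying the negative connection to $f$ has $\face{}{1,0}$- and $\face{}{2,0}$-faces equal to $f$, and $\face{}{1,1}$- and $\face{}{2,1}$-faces degenerate at $x$ (using that $f$ is a loop at $x$); this is a concatenation square for $f$ and $x\degen{}{1}$ whose top is $f$, so $[f] \cdot [x\degen{}{1}] = [f]$. Dually, the degeneracy $f\degen{}{2}$ has $\face{}{1,0}$- and $\face{}{1,1}$-faces degenerate at $x$ and $\face{}{2,0}$- and $\face{}{2,1}$-faces both equal to $f$, serving as a concatenation square for $x\degen{}{1}$ and $f$ whose top is $f$.

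For inverses, given a loop $f$ at $x$, form the map out of the open box $\obox{2}{2,1} \to X$ specified by $\face{}{1,0} = f$, $\face{}{1,1} = x\degen{}{1}$, and $\face{}{2,0} = x\degen{}{1}$. Since $f$ is a loop at $x$, these three faces agree on shared vertices, so the Kan condition gives a 2-cube whose $\face{}{2,1}$-face is a loop $f^{-1}$ at $x$; the filler is a concatenation square for $f$ and $f^{-1}$ with top $x\degen{}{1}$, so $[f] \cdot [f^{-1}] = [x\degen{}{1}]$. A symmetric construction, filling the open box $\obox{2}{1,0}$ with $\face{}{1,1} = \face{}{2,0} = x\degen{}{1}$ and $\face{}{2,1} = f$, produces a left inverse $f'$. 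Once associativity is known, the standard monoid calculation $[f'] = [f'] \cdot [x\degen{}{1}] = [f'] \cdot ([f] \cdot [f^{-1}]) = ([f'] \cdot [f]) \cdot [f^{-1}] = [x\degen{}{1}] \cdot [f^{-1}] = [f^{-1}]$ shows $[f^{-1}]$ is a two-sided inverse.

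The main obstacle is associativity, showing $[(fg)h] = [f(gh)]$. The plan is to assemble a map out of an open 3-box $\obox{3}{i,\varepsilon} \to X$ whose five specified 2-faces are built from concatenation squares for the pairs $(f, g)$, $(fg, h)$, $(g, h)$, and $(f, gh)$ together with suitable 2-cubes degenerate at $x$, arranged so that the remaining 2-face, produced by Kan filling, has two parallel 1-faces given by $(fg)h$ and $f(gh)$ and the other two 1-faces degenerate at $x$. Such a 2-cube is an elementary homotopy from $(fg)h$ to $f(gh)$ relative to the boundary, forcing $[(fg)h] = [f(gh)]$ in $\pi_1(X,x)$. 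The delicate step is verifying that the five specified 2-faces agree on their shared 1-dimensional edges, which is a bookkeeping exercise in the cubical identities $\partial_{j,\varepsilon'} \partial_{i,\varepsilon} = \partial_{i+1,\varepsilon} \partial_{j,\varepsilon'}$ for $j \leq i$; beyond this, the construction is a routine application of the Kan condition.
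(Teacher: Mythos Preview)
Your treatment of units and inverses matches the paper exactly: $f\conn{}{1,0}$ and $f\degen{}{2}$ are concatenation squares witnessing right and left unit, and Kan fillings of open $2$-boxes give one-sided inverses that are then reconciled using associativity. (The paper produces only left inverses and shows the left inverse of the left inverse is $[f]$, but the algebra is the same as yours.)

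For associativity, your plan is sound in spirit but differs from the paper in its target, and the difference is worth noting. You aim for the missing face of the $3$-box to be a relative homotopy between $(fg)h$ and $f(gh)$; this forces both triple concatenations to appear as edges of the filled $3$-cube, and hence both $\langle fg,h\rangle$ and $\langle f,gh\rangle$ must sit among the five given faces in positions whose $\face{}{2,0}$-edge lies on the missing face. Chasing the cubical identities, the remaining three faces then need boundaries such as $\partial_{1,0}=fg$, $\partial_{1,1}=f$, $\partial_{2,0}=x\degen{}{1}$, which none of the four concatenation squares nor a purely degenerate square provides; you would need auxiliary Kan fillings before assembling the $3$-box. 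The paper sidesteps this by choosing a different target: it fills $\obox{3}{3,0}$ with faces $\langle f,g\rangle$, $x\degen{}{1}\degen{}{1}$, $\langle f,gh\rangle$, $h\degen{}{2}$, $\langle g,h\rangle$, and the new $\face{}{3,0}$-face is then \emph{itself} a concatenation square for $fg$ and $h$ whose top edge is $f(gh)$. Since concatenation is already known to be well-defined up to homotopy, this immediately gives $[(fg)h]=[f(gh)]$ without ever producing an explicit homotopy between the two compounds. Your route can be completed, but the paper's is one step shorter.
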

\begin{proof}
    Observe that, given $f \from \cube{0} \to \loopsp(X,x)$, the 2-cubes $f\degen{}{2}$ and $f\conn{}{1,0}$ witness $x\degen{}{1}$ as a left and right unit, respectively.
    \[ \begin{tikzcd}
        x \ar[r, "f"] \ar[d, equal] \ar[dr, phantom, "f\degen{}{2}" description] & x \ar[d, equal] \\
        x \ar[r, "f"'] & x
    \end{tikzcd} \quad \begin{tikzcd}
        x \ar[r, "f"] \ar[d, "f"'] \ar[dr, phantom, "f\conn{}{1,0}" description] & x \ar[d, equal] \\
        x \ar[r, equal] & x
    \end{tikzcd} \]

    For associativity, fix $f, g, h \from \cube{0} \to \loopsp(X,x)$.
    We have a map $\obox{3}{3,0} \to X$ given by the following faces.
    \[ \begin{array}{l l}
        \face{}{1,0} := \cpsq{f, g} & \face{}{1,1} := x\degen{}{1}\degen{}{1} \\
        \face{}{2,0} := \cpsq{f, gh} & \face{}{2,1} := h\degen{}{2} \\
         & \face{}{3,1} := \cpsq{g, h}
    \end{array} \qquad \begin{tikzcd}
        x \ar[rr, "f(gh)"] \ar[dr, "fg" description] \ar[ddd, "f"'] & {} & x \ar[dr, equal] \ar[ddd, equal] & {} \\
        {} & x \ar[rr, crossing over, "h" near start] & {} & x \ar[ddd, equal] \\
        \\
        x \ar[rr, "gh" near start] \ar[dr, "g"'] & {} & x \ar[dr, equal] & {} \\
        {} & x \ar[rr, "h"] \ar[from=uuu, crossing over, equal] & {} & x
    \end{tikzcd} \]
    As $X$ is Kan, we have a filler for this open box.
    The $\face{}{3,0}$-face witnesses $f(gh)$ as the composition of $fg$ and $h$.
    That is, $[f(gh)] = [(fg)h]$.

    Concerning inverses, fix $f \from \cube{0} \to \loopsp(X,x)$ and consider the following map $\obox{2}{1,0} \to X$.
    \[ \begin{tikzcd}
        x \ar[r, equal] & x \ar[d, equal] \\
        x \ar[r, "f"] & x
    \end{tikzcd} \]
    As $X$ is Kan, this map has a filler $H \from \cube{2} \to X$.
    Let $g$ be the $\face{}{2,1}$-face of $H$, so that $H$ witnesses $x\degen{}{1}$ as a composite of $g$ and $f$.
    This shows every element of $\pi_1(X,x)$ has a left inverse.
    In particular, $[g]$ has a left inverse $[h]$.
    We have that
    \[ [f] = [x\degen{}{1}][f] = [hg][f] = [h][gf] = [h][x\degen{}{1}] = [h]. \]
    Thus $[g]$ is a right inverse of $[f]$ as it is a right inverse of $[h]$.
\end{proof}
\begin{remark}
    This proof relies on the existence of negative connections.
    For cubical sets with positive connections, there is an analogous definition of concatenation which also gives a group structure on the fundamental group.
\end{remark}

\subsection{Higher homotopy groups}
We obtain higher homotopy groups by iterating the loop space construction.
\begin{definition}
    Let $(X,x)$ be a pointed Kan complex.
    The \emph{$n$-th loop space} of $X$ is defined inductively by
    \[ \loopsp[n](X,x) := \begin{cases}
        (X,x) & n = 0 \\
        \loopsp (\loopsp[n-1](X,x)) & n > 0.
    \end{cases} \]
\end{definition}
We have the following equality.
\begin{proposition} \label{thm:loopn_flipn}
    For any pointed Kan complex $(X,x)$ and $n \geq 0$, we have
    \[ \loopsp[n+1](X,x) = \loopsp[n](\loopsp (X,x)) \]
\end{proposition}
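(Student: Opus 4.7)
The plan is to observe that the first equality $\loopsp[n+1](X,x) = \loopsp(\loopsp[n](X,x))$ is immediate from the inductive definition of $\loopsp[n]$, so all the content lies in the second equality $\loopsp(\loopsp[n](X,x)) = \loopsp[n](\loopsp(X,x))$. I would prove this by straightforward induction on $n$.

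For the base case $n = 0$, both sides reduce to $\loopsp(X,x)$ using the convention $\loopsp[0](Y,y) = (Y,y)$. For the inductive step, assuming $\loopsp(\loopsp[n](X,x)) = \loopsp[n](\loopsp(X,x))$, I would compute
\[ \loopsp[n+1](\loopsp(X,x)) = \loopsp(\loopsp[n](\loopsp(X,x))) = \loopsp(\loopsp(\loopsp[n](X,x))) = \loopsp(\loopsp[n+1](X,x)), \]
where the first and third equalities are by definition of $\loopsp[n+1]$ and the middle equality applies the inductive hypothesis inside an outer $\loopsp$.

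There is no real obstacle here; the statement is essentially a formal consequence of the inductive definition, and the proof is a one-line induction. I expect the entire argument to fit in a few lines without requiring any use of the fibration category structure, Kan conditions, or properties of pullbacks beyond what is baked into the definition of $\loopsp$.
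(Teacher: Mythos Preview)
Your proposal is correct and matches the paper's proof essentially verbatim: the paper also handles $n=0$ as trivial and, for the inductive step, writes the single chain $\loopsp[n+2](X,x) = \loopsp(\loopsp[n+1](X,x)) = \loopsp(\loopsp[n](\loopsp(X,x))) = \loopsp[n+1](\loopsp(X,x))$, which is your chain read in the opposite direction. No additional structure (Kan conditions, pullbacks, etc.) is invoked, just as you anticipated.
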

\begin{proof}
    For $n = 0$, this is clear.

    By induction, suppose this equality holds for some fixed $n \geq 0$.
    Observe that
    \[ \loopsp[n+2](X,x) = \loopsp(\loopsp[n+1](X,x)) = \loopsp (\loopsp[n](\loopsp(X,x))) = \loopsp[n+1](\loopsp(X,x)). \]
\end{proof}

We can now define $\pi_n(X, x) = \pi_0 (\loopsp[n](X,x))$.
As $\pi_0 (\loopsp[n](X,x)) = \pi_1(\loopsp[n-1](X,x))$ and the latter has a group structure, this induces a group structure on all homotopy groups.
In particular, for every $n \geq 1$, we have a functor $\pi_n \from \Kan_* \to \Grp$.

\subsection{Invariance}
In this subsection, we show that higher homotopy groups are invariant under homotopy i.e.~homotopic functions induce identical maps on homotopy groups.
We also show invariance of basepoint, that is, if $x, y \from \cube{0} \to X$ are in the same connected component then $\pi_n(X,x) \cong \pi_n(X,y)$.

Recall that a 0-cube of $\loopsp[n] (X,x)$ is a map $\cube{n} \to X$ which sends the boundary to $x$.
This gives a map 
\[ \Mono{\cSet}((\cube{n}, \bd \cube{n}), (X, x)) \to \loopsp[n](X,x)_0 \]
from the set of relative maps $(\cube{n}, \bd \cube{n}) \to (X,x)$ to the set of 0-cubes of $\loopsp[n](X,x)$.
This map is natural in $(X,x)$.
A relative homotopy between two such relative maps may be regarded as a 1-cube of $\loopsp[n](X,x)$.
This gives a map
\[ \Mono{\cSet}((\cube{n} \gprod \cube{1}, \bd \cube{n} \gprod \cube{1}), (X,x)) \to \loopsp[n](X,x)_1 \] 
which is natural in $(X,x)$.
We show every 0 and 1-cube of $\loopsp[n](X,x)$ arises in this way.
\begin{proposition} \label{thm:loop_space_is_cube_maps}
    Let $(X,x)$ be a pointed Kan complex.
    \begin{enumerate}
        \item The natural map 
        \[ \Mono{\cSet}((\cube{n}, \bd \cube{n}), (X, x)) \to \loopsp[n](X,x)_0 \] 
        is a bijection of sets.
        \item The natural map 
        \[ \Mono{\cSet}((\cube{n} \gprod \cube{1}, \bd \cube{n} \gprod \cube{1}) , (X,x)) \to \loopsp[n](X,x)_1 \] 
        is a bijection of sets.
    \end{enumerate}
\end{proposition}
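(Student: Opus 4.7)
The plan is to prove both parts at once by establishing the more general bijection
$$\loopsp^n(X,x)_k \iso \Mono{\cSet}\bigl((\cube{n+k},\, \bd \cube{n} \gprod \cube{k}),\, (X, x)\bigr)$$
for every $k \geq 0$, natural in $(X,x)$, where $\bd \cube{n} \gprod \cube{k}$ sits inside $\cube{n+k} = \cube{n} \gprod \cube{k}$ as the ``side walls'' in directions $1, \ldots, n$. Part (1) is the case $k = 0$ and part (2) the case $k = 1$. The argument proceeds by induction on $n$; note that the Kan hypothesis plays no role here, since the bijection is purely structural.

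The base case $n = 0$ is immediate, since $\loopsp^0(X,x) = (X,x)$, $\bd \cube{0} = \varnothing$, and both sides reduce to $X_k$. For the inductive step, I would first unfold the defining pullback of $\loopsp$ at the level of $k$-cubes. Writing $(Y,y) := \loopsp^n(X,x)$, this gives that $\loopsp^{n+1}(X,x)_k = \loopsp(Y,y)_k$ is the set of $(k+1)$-cubes $d$ of $Y$ whose $\face{}{1,\varepsilon}$-faces are the totally degenerate $k$-cube at $y$, for $\varepsilon = 0, 1$. (This uses that limits in $\cSet$ are computed pointwise, together with the adjunction $\cSet(\cube{k}, \rhom(\cube{1}, Y)) = Y_{k+1}$.) Applying the induction hypothesis at level $(n, k+1)$ then identifies such a $(k+1)$-cube with a relative map $(\cube{n+k+1},\, \bd \cube{n} \gprod \cube{k+1}) \to (X, x)$.

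It remains to translate the extra condition on $d\face{}{1,\varepsilon}$. Under the identification $\cube{k+1} = \cube{1} \gprod \cube{k}$ with the prepended $\cube{1}$ in position $1$, direction $1$ of the $(k+1)$-cube of $Y$ corresponds to direction $n+1$ of $\cube{n+k+1} = \cube{n} \gprod \cube{1} \gprod \cube{k}$ (the earlier iterations having populated directions $1, \ldots, n$). The basepoint of $Y$ corresponds under the inductive bijection to the constant map at $x$, so the extra condition becomes: the $(n+k+1)$-cube of $X$ has its face in direction $n+1$ factoring through $x$. Combining this with the inductive constraints on directions $1, \ldots, n$ and applying the cubical identity
$$\bd \cube{n+1} \gprod \cube{k} = (\bd \cube{n} \gprod \cube{k+1}) \cup (\cube{n} \gprod \bd \cube{1} \gprod \cube{k})$$
yields that the restriction to $\bd \cube{n+1} \gprod \cube{k}$ factors through $x$, completing the induction. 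The main bookkeeping challenge is tracking how each outer application of $\loopsp$ contributes a new loop direction that lands in the correct coordinate of the underlying cube of $X$; once the direction conventions are fixed, the remainder of the argument reduces to routine applications of the pullback universal property and the adjunction $\cube{1} \gprod \uvar \adj \rhom(\cube{1}, \uvar)$.
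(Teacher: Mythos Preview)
Your argument is correct and, in fact, proves more than the paper asks: you establish the bijection at every level $k$, not just $k=0,1$. The main difference from the paper's proof lies in the inductive decomposition. You use the definitional recursion $\loopsp[n+1] = \loopsp \circ \loopsp[n]$, which forces you to strengthen the inductive hypothesis to all $k$ (since unwinding the outer $\loopsp$ bumps the cube degree by one). The paper instead invokes the flip identity $\loopsp[n+1](X,x) = \loopsp[n](\loopsp(X,x))$ (their \cref{thm:loopn_flipn}) and applies the inductive hypothesis with $(\loopsp(X,x), x\sigma_1)$ in place of $(X,x)$; this lets them induct only on the two statements (1) and (2) directly, at the cost of relying on the auxiliary flip lemma.

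One small point to tighten: your inductive step uses that the face operator $\partial_{1,\varepsilon}$ on $Y_{k+1}$ corresponds to $\partial_{n+1,\varepsilon}$ on $X_{n+k+1}$. This is true, but strictly speaking it requires knowing that the inductive bijection is compatible with the cubical structure (not merely a bijection of sets natural in $(X,x)$). You clearly have this in mind in your discussion of direction conventions, but it would be cleanest to state the inductive hypothesis as an identification of cubical sets (or at least record that $\partial^Y_{j,\varepsilon}$ corresponds to $\partial^X_{n+j,\varepsilon}$), so the matching of directions is available by hypothesis rather than left implicit.
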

\begin{proof}
    We proceed by induction on $n$.

    For $n = 1$, this follows by definition of the loop space.

    Fix $n \geq 1$ and suppose the statement holds for this $n$.
    Consider a map $\cube{0} \to \loopsp[n+1](X,x)$.
    By \cref{thm:loopn_flipn}, a map $\cube{0} \to \loopsp[n+1](X,x)$ is a map $\cube{0} \to \loopsp[n](\loopsp(X,x))$.
    By our inductive hypothesis, this corresponds to a map $\cube{n} \to \loopsp(X,x)$ which sends each face to $x\degen{1}{}\dots\degen{n-1}{}$.
    Recall an $n$-cube of $\loopsp(X,x)$ is a map $\cube{n+1} \to X$ which sends the $\face{}{1,0}$ and $\face{}{1,1}$-faces to $x\degen{1}{}\dots\degen{n}{}$.
    As all other faces are also mapped to $x\degen{1}{}\dots\degen{n}{}$, this proves (1).

    Analogously, a map $\cube{1} \to \loopsp[n+1](X,x)$ is a map $\cube{n+1} \to \loopsp(X,x)$ where each face is degenerate at $x\degen{}{1}$ except the $\face{}{n+1,0}$- and $\face{}{n+1,1}$-faces.
    As an $(n+1)$-cube of $\loopsp(X,x)$ is a map $\cube{n+2} \to X$ which sends the $\face{}{1,0}$- and $\face{}{1,1}-$face to $x\degen{1}{}\dots\degen{n+1}{}$, this is exactly a map $\cube{n+2} \to X$ where each face is degenerate at $x$ except the $\face{}{n+2,0}$- and $\face{}{n+2,1}$-faces,
    that is, a morphism in $\Mono{\cSet}$ from $(\cube{n+1} \gprod \cube{1}, \bd \cube{n+1} \gprod \cube{1})$ to $(X,x)$.
\end{proof}
\begin{corollary} \label{thm:pi_n_is_cube_maps}
    Let $(X,x)$ be a pointed Kan complex.
    The natural bijection
    \[ \Mono{\cSet}((\cube{n}, \bd \cube{n}), (X, x)) \cong \loopsp[n](X,x)_0 \] 
    induces a bijection
    \[ [(\cube{n}, \bd \cube{n}), (X,x)] \cong \pi_n(X,x) \]
    natural in $X$. \noproof
\end{corollary}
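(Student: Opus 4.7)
The plan is to promote the bijection of \cref{thm:loop_space_is_cube_maps} from the level of raw cubes to the level of equivalence classes. The key observation is that $\pi_n(X, x) = \pi_0(\loopsp[n](X, x))$, and since $(X, x)$ is a pointed Kan complex, iterating \cref{thm:loopsp_kan} shows $\loopsp[n](X, x)$ is itself Kan. Consequently, by \cref{thm:htpy_eq_rel}, two $0$-cubes of $\loopsp[n](X, x)$ represent the same class in $\pi_0$ if and only if there exists a $1$-cube of $\loopsp[n](X, x)$ whose $\face{}{1,0}$- and $\face{}{1,1}$-faces are the given $0$-cubes.

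Next I would verify that parts (1) and (2) of \cref{thm:loop_space_is_cube_maps} are compatible with the face maps $\face{}{1,0}$ and $\face{}{1,1}$. Precisely, I claim that under the bijection of part (2), the $\face{}{1,\varepsilon}$-face of a $1$-cube of $\loopsp[n](X, x)$ corresponds to the $0$-cube of $\loopsp[n](X, x)$ that is the image, under the bijection of part (1), of the relative map $(\cube{n}, \bd\cube{n}) \to (X, x)$ obtained by restricting the relative homotopy $(\cube{n+1}, \bd\cube{n} \gprod \cube{1}) \to (X, x)$ along $\face{}{1,\varepsilon}$. This compatibility can be read off directly from the inductive construction used to prove \cref{thm:loop_space_is_cube_maps}, since at each step the correspondence is built by reindexing faces of the underlying cube.

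Combining the two observations, the bijection of \cref{thm:loop_space_is_cube_maps}(1) identifies the relation "connected by a $1$-cube" on $0$-cubes of $\loopsp[n](X, x)$ with the relation "relatively homotopic" on morphisms $(\cube{n}, \bd \cube{n}) \to (X, x)$ in $\Mono{\cSet}$. By \cref{thm:rel_htpy_eq_rel}, the latter is an equivalence relation (since $X$ and the chosen basepoint are Kan), so the bijection descends to the quotients, yielding
\[ [(\cube{n}, \bd \cube{n}), (X, x)] \cong \pi_n(X, x). \]
Naturality in $(X,x)$ is inherited directly from the naturality in \cref{thm:loop_space_is_cube_maps}, since restriction along any pointed map of Kan complexes commutes with the face maps $\face{}{1,0}, \face{}{1,1}$ on both sides.

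The main obstacle, and really the only point that requires care beyond formal manipulation, is the compatibility check in the second step: one must confirm that the inductively-constructed bijections of \cref{thm:loop_space_is_cube_maps} genuinely intertwine the two $1$-cube faces of $\loopsp[n](X,x)$ with the two face restrictions of a relative homotopy. Granting this bookkeeping, the remainder of the argument is purely formal, amounting to passing to equivalence classes on each side of an already-established natural bijection.
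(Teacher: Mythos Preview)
Your argument is correct and is precisely the unpacking the paper has in mind: the corollary is stated there without proof, as an immediate consequence of \cref{thm:loop_space_is_cube_maps}, and your proposal supplies exactly the routine passage to equivalence classes using parts (1) and (2) of that proposition together with the fact that $\loopsp[n](X,x)$ is Kan. The only minor point is notational: when you speak of restricting the relative homotopy along $\face{}{1,\varepsilon}$, you mean the map $\cube{n}\gprod\face{}{1,\varepsilon}$ in the $\cube{1}$ factor (which is $\face{}{n+1,\varepsilon}$ on $\cube{n+1}$), but the intent is clear.
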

This gives that $\pi_n \from \pt{\cSet} \to \Grp$ is homotopy invariant.
\begin{theorem} \label{thm:pi_n_htpy_to_eq}
    Let $(X,x), (Y,y)$ be pointed Kan complexes and $H \from X \gprod \cube{1} \to Y$ be a pointed homotopy between pointed maps $f, g \from (X,x) \to (Y,y)$.
    Then $\pi_n f = \pi_n g$.
\end{theorem}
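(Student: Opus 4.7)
The plan is to use the identification from \cref{thm:pi_n_is_cube_maps}, under which $\pi_n(X,x) \iso [(\cube{n}, \bd \cube{n}), (X,x)]$ and the induced map $\pi_n f$ corresponds, by naturality of the bijection, to postcomposition $[\alpha] \mapsto [f \alpha]$ on relative homotopy classes. It therefore suffices to produce, for each relative map $\alpha \from (\cube{n}, \bd \cube{n}) \to (X, x)$, a relative homotopy from $f \alpha$ to $g \alpha$ regarded as relative maps $(\cube{n}, \bd \cube{n}) \to (Y, y)$.

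The candidate is the obvious one: set $\overline{H} := H \circ (\alpha \gprod \cube{1}) \from \cube{n} \gprod \cube{1} \to Y$. The two routine verifications are (a) that $\overline{H} \face{}{n+1,0} = f \alpha$ and $\overline{H} \face{}{n+1,1} = g \alpha$, which is immediate from the fact that $H$ is an elementary homotopy from $f$ to $g$; and (b) that the restriction of $\overline{H}$ to $\bd \cube{n} \gprod \cube{1}$ factors through the basepoint $y$. For (b), since $\alpha$ is a morphism of $\Mono{\cSet}$ from $(\cube{n}, \bd \cube{n})$ to $(X, x)$, the map $\restr{\alpha}{\bd \cube{n}}$ factors through $x \from \cube{0} \to X$. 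Hence $\restr{(\alpha \gprod \cube{1})}{\bd \cube{n} \gprod \cube{1}}$ factors through $x \gprod \cube{1}$, and composing with $H$ yields the constant map at $y$ because $H$ is a \emph{pointed} homotopy, i.e.\ $H \circ (x \gprod \cube{1}) = y \degen{}{1}$.

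Combining (a) and (b), $\overline{H}$ defines a morphism $(\cube{n} \gprod \cube{1}, \bd \cube{n} \gprod \cube{1}) \to (Y, y)$ in $\Mono{\cSet}$ witnessing a relative homotopy from $f \alpha$ to $g \alpha$. Under the bijection of \cref{thm:pi_n_is_cube_maps} this yields $(\pi_n f)[\alpha] = [f \alpha] = [g \alpha] = (\pi_n g)[\alpha]$, so $\pi_n f = \pi_n g$ as claimed. There is no substantive obstacle: the entire content of the statement has been repackaged into the identification of \cref{thm:pi_n_is_cube_maps}, after which the argument is a purely formal check that pointed homotopies restrict correctly on the boundary.
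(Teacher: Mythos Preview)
Your proposal is correct and follows essentially the same route as the paper: both use \cref{thm:pi_n_is_cube_maps} to view $\pi_n f$ and $\pi_n g$ as postcomposition on relative homotopy classes $[(\cube{n}, \bd \cube{n}), (X,x)] \to [(\cube{n}, \bd \cube{n}), (Y,y)]$, and then observe that the pointed homotopy $H$ yields a relative homotopy $f\alpha \rsim g\alpha$ for each $\alpha$. The only difference is cosmetic: the paper invokes \cref{thm:compose_rel_htpy}(2) for this last step, whereas you inline its one-line construction $H \circ (\alpha \gprod \cube{1})$ and verify the boundary condition by hand.
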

\begin{proof}
    By \cref{thm:pi_n_is_cube_maps}, we view $\pi_n f$ and $\pi_n g$ as functions between relative homotopy classes of relative maps  
    \[ [(\cube{n}, \bd \cube{n}), (X,x)] \to [(\cube{n}, \bd \cube{n}), (Y,y)] \] 
    induced by post-composition.
    This then follows by \cref{thm:compose_rel_htpy}.
\end{proof}
\begin{corollary} \label{thm:pi_n_we_to_iso}
    Let $f \from X \to Y$ be a pointed map between pointed Kan complexes.
    If $f$ is a homotopy equivalence then, for any $x \in X$ and $n \geq 0$, the map $\pi_n f \from \pi_n (X,x) \to \pi_n (Y,f(x))$ is an isomorphism. \noproof
\end{corollary}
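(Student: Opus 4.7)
The plan is to promote $f$ to a pointed homotopy equivalence and then apply functoriality of $\pi_n$ together with \cref{thm:pi_n_htpy_to_eq}.

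First, I would observe that since every cubical set is cofibrant and Kan complexes are fibrant, \cref{thm:wequiv_is_htpy_equiv} implies that the homotopy equivalence $f$ is a weak equivalence in $\cSet$. Viewing $f$ as a pointed map $(X,x) \to (Y,f(x))$ in $\pt{\cSet}$, the underlying map is still a weak equivalence, so \cref{thm:wequiv_is_htpy_equiv_ptd} yields that $f$ is a homotopy equivalence in $\pt{\cSet}$. Hence there exist a pointed map $g \from (Y,f(x)) \to (X,x)$ and pointed homotopies $H \from gf \rsim \id_{(X,x)}$ and $K \from fg \rsim \id_{(Y,f(x))}$.

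Next, I would use that $\pi_n \from \pt{\Kan} \to \Grp$ is a functor, so that
\[ \pi_n(g) \circ \pi_n(f) = \pi_n(gf) \qquad \text{and} \qquad \pi_n(f) \circ \pi_n(g) = \pi_n(fg). \]
Applying \cref{thm:pi_n_htpy_to_eq} to the pointed homotopies $H$ and $K$ gives $\pi_n(gf) = \pi_n(\id_{(X,x)}) = \id_{\pi_n(X,x)}$ and $\pi_n(fg) = \pi_n(\id_{(Y,f(x))}) = \id_{\pi_n(Y,f(x))}$. Consequently $\pi_n(f)$ is an isomorphism with two-sided inverse $\pi_n(g)$.

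Since the result is almost a formal consequence of \cref{thm:pi_n_htpy_to_eq} once one has a pointed homotopy inverse to $f$, I do not anticipate any real obstacle; the only subtlety is that \cref{thm:pi_n_htpy_to_eq} requires \emph{pointed} homotopies, whereas the hypothesis only supplies an unpointed homotopy equivalence. This is precisely what the passage through \cref{thm:wequiv_is_htpy_equiv,thm:wequiv_is_htpy_equiv_ptd} is for.
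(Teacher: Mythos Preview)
Your argument is correct and is precisely the intended one: the paper records this as an immediate corollary of \cref{thm:pi_n_htpy_to_eq} (hence the \verb|\noproof|), and the only point to supply is the passage from an unpointed homotopy equivalence to a pointed one via \cref{thm:wequiv_is_htpy_equiv,thm:wequiv_is_htpy_equiv_ptd}, which you handle exactly as expected.
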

We have that if $x, y \from \cube{0} \to X$ are in the same connected component then they induce isomorphic homotopy groups.
\begin{proposition} \label{thm:pi_n_pi_0_iso}
    For a Kan complex $X$, if $x, y \from \cube{0} \to X$ are in the same connected component then there is a homotopy equivalence $f \from (X,x) \to (X,y)$.
\end{proposition}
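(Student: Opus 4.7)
The plan is to construct $f$ as a self-map of $X$ sending $x$ to $y$ that is homotopic to $\id[X]$ in $\cSet$, from which the conclusion follows via \cref{thm:wequiv_is_htpy_equiv,thm:wequiv_is_htpy_equiv_ptd}.

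Since $X$ is Kan, \cref{thm:htpy_eq_rel} tells us that elementary homotopy is already an equivalence relation on $\cSet(\cube{0}, X)$, so $x, y$ being in the same connected component yields a 1-cube $p \from \cube{1} \to X$ with $p\face{}{1,0} = x$ and $p\face{}{1,1} = y$. The maps $\id[X] \from X \gprod \{0\} \to X$ and $p \from \cube{0} \gprod \cube{1} \to X$ (the latter obtained by viewing $\cube{0}$ as the image of $x$) agree on $\cube{0} \gprod \{0\}$ and so assemble into a single map out of the pushout
\[ (X \gprod \{0\}) \push_{\cube{0} \gprod \{0\}} (\cube{0} \gprod \cube{1}) \to X. \]
The inclusion of this pushout into $X \gprod \cube{1}$ is the pushout product $(x \from \cube{0} \ito X) \pp (\{0\} \ito \cube{1})$, and since $\{0\} \ito \cube{1}$ is an open box inclusion hence anodyne, \cref{thm:pp_anodyne} makes this pushout product an acyclic cofibration. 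As $X$ is Kan, we lift the assembled map to a homotopy $H \from X \gprod \cube{1} \to X$.

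Define $f := H\face{}{1,1} \from X \to X$. By construction $H\face{}{1,0} = \id[X]$ and $H \circ (x \gprod \cube{1}) = p$, so $f \circ x = p\face{}{1,1} = y$; thus $f$ defines a pointed map $(X,x) \to (X,y)$, and $H$ is a homotopy from $\id[X]$ to $f$ in $\cSet$. In particular $\id[X]$ is a two-sided homotopy inverse of $f$, so $f$ is a homotopy equivalence of cubical sets. Since every cubical set is cofibrant and $X$ is fibrant, \cref{thm:wequiv_is_htpy_equiv} upgrades $f$ to a weak equivalence, and then \cref{thm:wequiv_is_htpy_equiv_ptd} promotes $f$ to a homotopy equivalence in $\pt{\cSet}$.

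No real obstacle arises --- this is a standard anodyne-extension argument. The only subtlety worth naming is the direction of the lift: by insisting on $H\face{}{1,0} = \id[X]$ (rather than $H\face{}{1,1} = \id[X]$), we arrange that the opposite face $f = H\face{}{1,1}$ automatically satisfies $f \circ x = y$, rather than $f \circ x = x$.
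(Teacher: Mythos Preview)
Your proof is correct and follows essentially the same route as the paper: pick a 1-cube from $x$ to $y$, extend $[\id[X], p]$ along the anodyne pushout product $(\cube{0} \ito X) \pp (\{0\} \ito \cube{1})$ to a homotopy $H \from X \gprod \cube{1} \to X$, take $f$ to be the other end, and invoke \cref{thm:wequiv_is_htpy_equiv_ptd}. The only cosmetic difference is that you pass through \cref{thm:wequiv_is_htpy_equiv} explicitly (homotopy equivalence $\Rightarrow$ weak equivalence) before applying \cref{thm:wequiv_is_htpy_equiv_ptd}, whereas the paper observes directly that $f$ is a weak equivalence since it is homotopic to the identity.
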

\begin{proof}
    As $x, y$ are in the same connected component and $X$ is Kan, there is a 1-cube $u \from \cube{1} \to X$ such that $u\face{}{1,0} = x$ and $u\face{}{1,1} = y$.
    This gives a map $[\id[X], u] \from X \gprod \{ 0 \} \cup_{\{ x \} \gprod \{ 0 \}} \{ x \} \gprod \cube{1} \to X$.
    As $\{ 0 \} \to \cube{1}$ is an anodyne map, this map has a lift $H \from X \gprod \cube{1} \to X$.
    The restriction of $H$ to $X \gprod \{ 1 \}$ is a map $f \from X \to X$ which sends $x$ to $y$.
    As $H$ is a homotopy from the identity map to $f$, the map $f$ is a weak equivalence.
    By \cref{thm:wequiv_is_htpy_equiv_ptd}, this map is a pointed homotopy equivalence.
\end{proof}
In particular, \cref{thm:pi_n_we_to_iso} shows $f$ induces an isomorphism $\pi_n f \from \pi_n(X,x) \to (X,y)$ for all $n \geq 0$.

\subsection{Equivalent definitions}
We may also express elements of $\pi_n(X,x)$ as maps from a quotient of the $n$-cube by its boundary into $X$.
For $n \geq 0$, let $\cube{n} / \bd \cube{n}$ be the quotient of $\cube{n}$ by its boundary i.e.~the cubical set given by the following pushout.
\[ \begin{tikzcd}
    \bd \cube{n} \ar[r] \ar[d, hook] \ar[rd, phantom, "\ulcorner" very near end] & \cube{0} \ar[d] \\
    \cube{n} \ar[r] & \cube{n} / \bd \cube{n}
\end{tikzcd} \]
Let $[0] \from \cube{0} \to \cube{n}/\bd \cube{n}$ and $[\id[\cube{n}]] \from \cube{n} \to \cube{n}/\bd\cube{n}$ denote the unique non-degenerate 0- and $n$-cubes of $\cube{n}/\bd \cube{n}$, respectively.
As $\cube{n} / \bd \cube{n}$ is a pushout, we have a bijection
\[ \pt{\cSet}((\cube{n} / \bd \cube{n}, [0]), (X,x)) \cong \Mono{\cSet}((\cube{n}, \bd \cube{n}), (X,x)) \]
natural in $(X,x)$.
By \cref{thm:loop_space_is_cube_maps}, this gives a bijection
\[ \pt{\cSet}((\cube{n} / \bd \cube{n}, [0]), (X,x)) \cong \loopsp[n](X,x)_0 \]
natural in $(X,x)$.
\begin{theorem} \label{thm:pi_n_as_quotient}
    Let $(X,x)$ be a pointed Kan complex and $n \geq 0$.
    The natural bijection
    \[ \pt{\cSet}((\cube{n} / \bd \cube{n}, [0]), (X,x)) \cong \loopsp[n](X,x)_0 \]
    induces a bijection
    \[ [(\cube{n} / \bd \cube{n}, [0]), (X,x)] \cong \pi_n(X,x). \]
\end{theorem}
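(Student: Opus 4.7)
The plan is to promote the underlying set bijection already in hand to a bijection on homotopy classes, by showing that a pointed homotopy on the left corresponds exactly to a $1$-cube on the right (which in a Kan complex detects the connected component relation). So I would chase through the three identifications: pointed homotopy $\leftrightarrow$ relative homotopy, relative homotopy $\leftrightarrow$ $1$-cube of $\loopsp[n](X,x)$, $1$-cube $\leftrightarrow$ same connected component.

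First, I would unwind pointed homotopy in the slice model structure $1 \downarrow \cSet$ (\cref{slice_model_structure}). For pointed maps $f, g \from (\cube{n}/\bd\cube{n}, [0]) \to (X,x)$, a pointed homotopy is a map $H \from \cube{n}/\bd\cube{n} \gprod \cube{1} \to X$ with $H\face{}{1,0}=f$, $H\face{}{1,1}=g$, and whose restriction along $[0] \gprod \cube{1}$ is constant at $x$. By the pushout defining $\cube{n}/\bd\cube{n}$, such a map is the same data as a map $\tilde H \from \cube{n} \gprod \cube{1} \to X$ carrying $\bd\cube{n}\gprod\cube{1}$ to $x$, i.e.\ a relative homotopy $(\cube{n}\gprod\cube{1}, \bd\cube{n}\gprod\cube{1}) \to (X,x)$ between the corresponding relative maps $(\cube{n},\bd\cube{n}) \to (X,x)$.

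Next I would invoke \cref{thm:loop_space_is_cube_maps}(2): such a relative homotopy is exactly a $1$-cube of $\loopsp[n](X,x)$ whose $\face{}{1,0}$- and $\face{}{1,1}$-faces are the $0$-cubes corresponding to $f$ and $g$ under the bijection of (1). Since $\loopsp[n](X,x)$ is a Kan complex (\cref{thm:loopsp_kan}), two $0$-cubes lie in the same connected component if and only if they are connected by a single $1$-cube, by \cref{thm:htpy_eq_rel} applied with $X=\cube{0}$. Therefore the equivalence relation induced on $\loopsp[n](X,x)_0$ by ``$1$-cubes'' is precisely the $\pi_0$-relation, and the underlying bijection of \cref{thm:pi_n_is_cube_maps} (composed with the pushout identification $\pt{\cSet}((\cube{n}/\bd\cube{n},[0]),(X,x)) \cong \Mono{\cSet}((\cube{n},\bd\cube{n}),(X,x))$) descends to the claimed bijection on equivalence classes.

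The only mildly subtle point — and the place I would be most careful — is verifying that pointed homotopy in $1 \downarrow \cSet$ unpacks to the relative homotopy notion of \cref{sec:rel-cset}, since the cylinder object in the slice is obtained from $\cube{n}/\bd\cube{n} \gprod \cube{1}$ by the basepoint-constancy condition. Everything else is formal: the rest of the argument is simply chaining together the bijections already proven, using that both the source pushout and the target loop space are built to make these correspondences natural in $(X,x)$.
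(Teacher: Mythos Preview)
Your proposal is correct and follows essentially the same approach as the paper: both identify pointed homotopies with $1$-cubes of $\loopsp[n](X,x)$ via \cref{thm:loop_space_is_cube_maps}(2) and then pass to components. The paper packages your ``unwinding'' step more tersely by observing that $\uvar \gprod \cube{1}$ preserves colimits, so the defining pushout for $\cube{n}/\bd\cube{n}$ tensors to a pushout presenting $(\cube{n}/\bd\cube{n})\gprod\cube{1}$, which immediately yields the bijection between pointed homotopies and relative maps $(\cube{n+1}, \bd\cube{n}\gprod\cube{1}) \to (X,x)$; your slice-category discussion of the cylinder is exactly this argument spelled out.
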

\begin{proof}
    As $\uvar \gprod \cube{1} \from \cSet \to \cSet$ preserves colimits, the square
    \[ \begin{tikzcd}
        \bd \cube{n} \gprod \cube{1} \ar[r, hook] \ar[d] \ar[rd, phantom, "\ulcorner" very near end] & \cube{n+1} \ar[d] \\
        \cube{1} \ar[r] & (\cube{n} / \bd \cube{n}) \gprod \cube{1}
    \end{tikzcd} \]
    is a pushout.
    Thus, we have a bijection
    \[ \Mono{\cSet}((\cube{n} / \bd \cube{n} \gprod \cube{1}, [0] \gprod \cube{1}), (X,x)) \cong \Mono{\cSet}((\cube{n} \gprod \cube{1}, \bd \cube{n} \gprod \cube{1}), (X,x)) \]
    natural in $(X,x)$.
    Note the left side of this isomorphism is exactly the set of pointed homotopies between pointed maps $(\cube{n} / \bd \cube{n}, [0]) \to (X,x)$.
    The right side of this isomorphism is the set of 1-cubes $\loopsp[n](X,x)_1$ by \cref{thm:loop_space_is_cube_maps}.
\end{proof}

We now move towards showing elements of $\pi_n(X,x)$ correspond to maps $\bd \cube{n+1} \to X$.
We construct a map $\bd \cube{n+1} \to \cube{n} / \bd \cube{n}$ by the following commutative square (using the fact that $\bd \cube{n+1}$ may be written as a pushout).
\[ \begin{tikzcd}
    \bd \cube{n} \gprod \bd \cube{1} \ar[r] \ar[d] \ar[rd, phantom, "\ulcorner" very near end] & \bd \cube{n} \gprod \cube{1} \ar[d] \ar[ddr, "{[0]}", bend left] \\
    \cube{n} \gprod \bd \cube{1} \ar[r] \ar[rrd, "{[[\id[\cube{n}]], [0]]}"', bend right] & \bd \cube{n+1} \ar[dr, dotted] \\
    & & \cube{n} / \bd \cube{n}
\end{tikzcd} \]

Let $0$ denote the map $\face{n+1}{0}\face{n}{0} \dots\face{1}{0} \from \cube{0} \to \bd \cube{n+1}$.
By \cref{thm:pi_n_as_quotient}, pre-composition with the map $\bd \cube{n+1} \to \cube{n} / \bd \cube{n}$ gives a map 
\[ \pi_n (X,x) \to [(\bd \cube{n+1}, 0), (X,x)] \] 
natural in $(X,x)$.
We show that this map is a bijection, so that the $n$-th homotopy group of a cubical set is in bijection with the set of pointed homotopy classes of pointed maps from $\bd \cube{n+1}$.
Moreover, we show a map $\bd \cube{n+1} \to X$ has a filler if and only if it is homotopic to the degenerate $(n+1)$-cube $x\degen{1}{1}\dots\degen{n+1}{1}$.
\begin{lemma} \label{thm:we_bd_to_quotient}
    The map $\bd \cube{n+1} \xrightarrow{\sim} \cube{n}/\bd\cube{n}$ is a weak equivalence.
\end{lemma}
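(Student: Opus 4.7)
The approach is to exhibit the given map as a pushout of a weak equivalence along a cofibration. Since the model structure on $\cSet$ is monomorphism-generated with every object cofibrant, it is left proper, so such pushouts are weak equivalences.

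First, I would decompose $\bd \cube{n+1}$ as a pushout of subcomplexes. Inspecting the definition of the map $\bd \cube{n+1} \to \cube{n} / \bd \cube{n}$ in the excerpt, pick the face $F = \face{}{i,\varepsilon} \from \cube{n} \ito \cube{n+1}$ whose image maps to $[\id[\cube{n}]]$, and let $\Lambda := \obox{n+1}{i,\varepsilon}$ denote the union of the remaining $2n+1$ faces (an open box). Then $\bd \cube{n+1} = F(\cube{n}) \cup \Lambda$ with $F(\cube{n}) \cap \Lambda = F(\bd \cube{n})$, so there is a pushout square
\[ \begin{tikzcd}
\bd \cube{n} \ar[r, hook] \ar[d, hook] \ar[rd, phantom, "\ulcorner" very near end] & \cube{n} \ar[d, hook, "F"] \\
\Lambda \ar[r, hook] & \bd \cube{n+1}\text{.}
\end{tikzcd} \]

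Next, I would paste this pushout against the defining pushout of $\cube{n} / \bd \cube{n}$:
\[ \begin{tikzcd}
\bd \cube{n} \ar[r, hook] \ar[d, hook] \ar[rd, phantom, "\ulcorner" very near end] & \cube{n} \ar[d, hook] \\
\Lambda \ar[r, hook] \ar[d] \ar[rd, phantom, "\ulcorner" very near end] & \bd \cube{n+1} \ar[d] \\
\cube{0} \ar[r] & \cube{n} / \bd \cube{n}\text{.}
\end{tikzcd} \]
By the pasting lemma, the outer rectangle is also a pushout, and its universal property identifies it with the defining pushout of $\cube{n}/\bd \cube{n}$. A short check against the explicit construction of $\bd \cube{n+1} \to \cube{n}/\bd\cube{n}$ in the excerpt confirms that the induced bottom-right vertical map agrees with it: both send $\cube{n}$ through $F$ by $[\id[\cube{n}]]$ and send the remaining faces $\Lambda$ to $[0]$. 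Consequently, the map $\bd \cube{n+1} \to \cube{n}/\bd\cube{n}$ is precisely the pushout of $\Lambda \to \cube{0}$ along the cofibration $\Lambda \ito \bd \cube{n+1}$.

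It remains to show $\Lambda \to \cube{0}$ is a weak equivalence: the open box inclusion $\Lambda \ito \cube{n+1}$ is anodyne (\cref{thm:pp_anodyne}), and $\cube{n+1} \to \cube{0}$ is a weak equivalence by \cref{thm:cube_n_contr}, so 2-out-of-3 gives the claim. Finally, since cofibrations in $\cSet$ are monomorphisms and every object is cofibrant, the model structure is left proper, so the pushout of the weak equivalence $\Lambda \to \cube{0}$ along the cofibration $\Lambda \ito \bd \cube{n+1}$ is a weak equivalence.

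The main obstacle I anticipate is the bookkeeping in the second step: carefully matching the map produced from the pasted pushout to the ad hoc map constructed in the excerpt. The other steps are clean applications of results established earlier in the paper. If one prefers to avoid invoking left properness abstractly, an alternative is to appeal to the gluing lemma for the span map $(\Lambda \leftarrow \bd \cube{n} \ito \cube{n}) \to (\cube{0} \leftarrow \bd \cube{n} \ito \cube{n})$, whose vertical components are all weak equivalences and whose right legs are cofibrations.
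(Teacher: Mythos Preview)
Your proposal is correct and follows essentially the same argument as the paper: both express $\bd\cube{n+1} \to \cube{n}/\bd\cube{n}$ as the pushout of the weak equivalence $\obox{n+1}{i,\varepsilon} \to \cube{0}$ along the cofibration $\obox{n+1}{i,\varepsilon} \ito \bd\cube{n+1}$, then invoke left properness. The paper presents the pasting diagram horizontally and fixes $(i,\varepsilon)=(n+1,1)$, but the logic is identical; your extra care in identifying the face $F$ from the explicit construction, and the alternative via the gluing lemma, are nice touches but not needed beyond what the paper does.
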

\begin{proof}
    The left and outer squares in the diagram
    \[ \begin{tikzcd}
        \bd \cube{n} \ar[r, "\face{}{n+1,1}"] \ar[d, hook] \ar[rd, phantom, "\ulcorner" very near end] & \obox{n+1}{n+1,1} \ar[r, "\sim"] \ar[d, hook] & \cube{0} \ar[d, hook] \\
        \cube{n} \ar[r, "\face{}{n+1,1}"] & \bd \cube{n+1} \ar[r] & \cube{n} / \bd \cube{n}
    \end{tikzcd} \]
    are pushouts, thus the right square is a pushout.
    The map $\obox{n+1}{n+1,1} \to \cube{0}$ is a weak equivalence as it factors as $\obox{n+1}{n+1,1} \ito \cube{n+1} \to \cube{0}$, where $\obox{n+1}{n+1,1} \ito \cube{n+1}$ is anodyne by definition and $\cube{n+1} \to \cube{0}$ is a weak equivalence by \cref{thm:cube_n_contr}.
    By left properness, the map $\bd \cube{n+1} \to \cube{n} / \bd \cube{n}$ is a weak equivalence.
\end{proof}
\begin{theorem} \label{thm:pi_n_as_bd_maps}
    Let $(X,x)$ be a pointed Kan complex.
    For $n \geq 0$, 
    \begin{enumerate}
        \item the natural map
        \[ \pi_n (X,x) \to [(\bd \cube{n+1}, 0), (X,x)]_* \]
        is a bijection;
        \item for any pointed map $f \from (\bd \cube{n+1}, 0) \to (X,x)$, writing $[\tilde{f}] \in \pi_n (X,x)$ for the preimage of $f$ under this bijection, we have that $[\tilde{f}] = [x\degen{1}{1}\dots\degen{n}{1}]$ if and only if $f$ has a filler $\cube{n} \to X$.
    \end{enumerate}
\end{theorem}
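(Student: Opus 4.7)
The plan for part (1) is to combine \cref{thm:pi_n_as_quotient} with \cref{thm:we_bd_to_quotient} and \cref{thm:we_to_hom_iso}. By \cref{thm:pi_n_as_quotient}, the group $\pi_n(X,x)$ is in natural bijection with $[(\cube{n}/\bd\cube{n}, [0]), (X,x)]_*$, and the natural map of the theorem statement is, by construction, pre-composition along $\bd\cube{n+1} \to \cube{n}/\bd\cube{n}$. That map is a weak equivalence between cofibrant objects by \cref{thm:we_bd_to_quotient}, and $(X,x)$ is fibrant in $\pt{\cSet}$, so \cref{thm:we_to_hom_iso} yields the required bijection.

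For part (2), the first step is to make the bijection explicit on representatives. A class in $\pi_n(X,x)$ is represented by a map $\tilde f \from \cube{n} \to X$ with $\tilde f|_{\bd\cube{n}} = x$. Unpacking the construction of $\bd\cube{n+1} \to \cube{n}/\bd\cube{n}$, the pointed map $f \from (\bd\cube{n+1}, 0) \to (X,x)$ associated to $[\tilde f]$ satisfies $f|_{\face{}{n+1,0}} = \tilde f$ and is the constant map to $x$ on both the $\face{}{n+1,1}$-face and on $\bd\cube{n}\gprod\cube{1}$.

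Using the identification $\cube{n+1} = \cube{n}\gprod\cube{1}$ with the trailing $\cube{1}$ in direction $n+1$, a filler $\cube{n+1}\to X$ of $f$ is precisely a map $H \from \cube{n}\gprod\cube{1} \to X$ with $H|_{\cube{n}\gprod\{0\}} = \tilde f$, $H|_{\cube{n}\gprod\{1\}} = x\degen{1}{1}\dots\degen{n}{1}$, and $H|_{\bd\cube{n}\gprod\cube{1}}$ constant at $x$. This is exactly a relative homotopy (in the sense of \cref{sec:rel-cset}) from $\tilde f$ to $x\degen{1}{1}\dots\degen{n}{1}$ relative to $\bd\cube{n}$, which witnesses $[\tilde f] = [x\degen{1}{1}\dots\degen{n}{1}]$ in $\pi_n(X,x)$; conversely, any such relative homotopy supplies a filler for $f$. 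The only non-trivial bookkeeping is verifying the representative-level description of $f$ above by tracing through the pushout defining $\bd\cube{n+1} \to \cube{n}/\bd\cube{n}$; once this is done, the biconditional in (2) reduces to a direct comparison of boundary conditions.
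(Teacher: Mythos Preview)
Your argument for part (1) is identical to the paper's: invoke \cref{thm:we_bd_to_quotient} for the weak equivalence, \cref{thm:we_to_hom_iso} for the induced bijection on pointed homotopy classes, and \cref{thm:pi_n_as_quotient} to identify the result with $\pi_n$.

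For part (2) your route differs from the paper's, and there is a small gap. The statement concerns an \emph{arbitrary} pointed map $f \from (\bd\cube{n+1},0) \to (X,x)$, but your analysis only treats those $f$ obtained by precomposing a representative $\tilde f \from \cube{n}/\bd\cube{n} \to X$ with $\bd\cube{n+1}\to\cube{n}/\bd\cube{n}$, i.e.\ those $f$ which are constant at $x$ on every face except $\face{}{n+1,0}$. For such $f$ your argument is correct and rather elegant: a filler of $f$ is literally a relative homotopy $\tilde f \rsim x\degen{}{1}\dots\degen{}{n}$ rel $\bd\cube{n}$, so the biconditional is immediate. To complete the proof you must still observe that every pointed $f$ is pointed-homotopic to one of this special form (this is surjectivity of the bijection in (1)) and that the existence of a filler is invariant under homotopy; the latter is exactly \cref{thm:htpy_bd} applied to $\bd\cube{n+1} \ito \cube{n+1}$. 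This is not covered by the ``bookkeeping'' you flag at the end.

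The paper takes a different path for (2), arguing each direction for arbitrary $f$ without normalizing. If $[\tilde f]$ is trivial then $f$ is pointed-homotopic to the constant map and \cref{thm:htpy_bd} supplies a filler; conversely, any filler $\cube{n+1}\to X$ is pointed-homotopic to the constant map since $\cube{n+1}$ is contractible (\cref{thm:cube_n_contr}, \cref{thm:we_to_hom_iso}), hence so is its restriction $f$. Your approach is more direct once the special form is in hand, while the paper's avoids the normalization step at the cost of invoking contractibility of $\cube{n+1}$.
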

\begin{proof}
    For (1), \cref{thm:we_bd_to_quotient} gives a weak equivalence $\bd \cube{n+1} \xrightarrow{\sim} \cube{n}/\bd\cube{n}$.
    This gives an isomorphism 
    \[ [(\bd \cube{n+1}, 0), (X,x)] \cong [(\cube{n}/\bd\cube{n}, [0]), (X,x)] \]
    by \cref{thm:we_to_hom_iso}.
    Applying \cref{thm:pi_n_as_quotient} gives the desired result.
    
    For (2), if $[\tilde{f}] = [x\degen{1}{1}\dots\degen{n}{1}]$ then $f$ is homotopic to the trivial pointed map $(\bd \cube{n+1}, 0) \to (X, x)$ by \cref{thm:pi_n_as_bd_maps}.
    Thus, $f$ has a filler by \cref{thm:htpy_bd}.

    If $f$ has a filler then $[f]$ lies in the image of the restriction map $[\cube{n+1}, X] \to [\bd \cube{n+1}, X]$.
    By \cref{thm:cube_n_contr,thm:we_to_hom_iso}, the map $[(\cube{n+1}, 0), (X, x)] \to [(\cube{0}, 0), (X,x)]$ is a bijection.
    As $[(\cube{0}, 0), (X,x)] = \{ [x] \}$, we have that the filler for $f$ is homotopic to $x\degen{1}{1}\degen{n+1}{n+1}$, hence $[\tilde{f}] = [x\degen{1}{}\dots\degen{n}{}]$ in $\pi_n (X,x)$.
\end{proof}

Finally, we may extend the definition of homotopy groups to arbitrary cubical sets, rather than just Kan complexes.
\begin{definition}
    Let $n \geq 0$.
    For a cubical set $X$ and $x \from \cube{0} \to X$, we define the \emph{$n$-th homotopy group} of $(X,x)$ to be $\pi_n (RX,x)$, where $RX$ is the fibrant replacement of $X$.
\end{definition}
This is well-defined for any fibrant replacement of $X$, as \cref{thm:fibr_repl_equiv} shows any two fibrant replacements of a cubical set are homotopy equivalent.
By \cref{thm:pi_n_we_to_iso}, their homotopy groups are isomorphic.

\subsection{Comparison to topological spaces}
\begin{theorem} \label{thm:pi_n_sing}
    Let $(X, x)$ be a pointed space.
    We have an isomorphism $\pi_n(X,x) \cong \pi_n(\Sing X, x)$ natural in $(X,x)$.
\end{theorem}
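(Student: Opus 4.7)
The plan is to identify both $\pi_n(X,x)$ and $\pi_n(\cSing X, x)$ with relative homotopy classes of maps out of a cube and then match them through the realization/singular complex adjunction.

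Since every topological space is fibrant and $\cSing$ is right Quillen, $\cSing X$ is a Kan complex, and the adjunction sends $x \in X$ to a basepoint $\cube{0} \to \cSing X$. Combining \cref{thm:pi_n_is_cube_maps} on the cubical side with the standard description of topological $\pi_n$ as homotopy classes of maps $(I^n, \bd I^n) \to (X, x)$ reduces the theorem to producing a natural bijection
\[ [(\cube{n}, \bd\cube{n}), (\cSing X, x)] \cong [(I^n, \bd I^n), (X, x)], \]
which for $n \geq 1$ is compatible with the group structures on both sides.

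Such a bijection is exactly what the adjunction provides. Since $\creali{\uvar}$ preserves colimits, we have $\creali{\cube{n}} = I^n$ and $\creali{\bd\cube{n}} = \bd I^n$; and $\creali{\cube{n} \gprod \cube{1}} = I^n \times I$, so relative cubical homotopies $(\cube{n}\gprod\cube{1}, \bd\cube{n}\gprod\cube{1}) \to (\cSing X, x)$ correspond to topological homotopies $(I^n \times I, \bd I^n \times I) \to (X, x)$. The adjunction therefore induces the required bijection on relative maps, which descends to the displayed bijection on homotopy classes. Naturality in $(X,x)$ is immediate since each ingredient (the adjunction unit, \cref{thm:pi_n_is_cube_maps}, and the topological definition) is natural.

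The main technical point, and the only mildly delicate step, is to check that this bijection is a group homomorphism for $n \geq 1$. The multiplication on $\pi_n(\cSing X, x)$ comes from a concatenation square $\cube{2} \to \loopsp[n-1](\cSing X, x)$, equivalently a specific $(n+1)$-cube $\cube{n+1} \to \cSing X$ whose faces implement the combinatorial composition of two loops. Under the adjunction this corresponds to a map $I^{n+1} \to X$ that glues two cubes along a coordinate face, which is precisely the classical concatenation defining the group operation on the topological $\pi_n(X,x)$. Once this matching is recorded, the bijection upgrades to a natural group isomorphism, completing the proof.
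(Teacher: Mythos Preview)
Your proof is correct and follows essentially the same approach as the paper: both use the adjunction $\creali{\uvar} \dashv \cSing$ to translate relative maps $(\cube{n}, \bd\cube{n}) \to (\cSing X, x)$ and relative homotopies into their topological counterparts $(I^n, \bd I^n) \to (X, x)$, invoking \cref{thm:pi_n_is_cube_maps} on the cubical side. You go slightly further by explicitly checking compatibility with the group multiplication, a point the paper's own proof leaves implicit.
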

\begin{proof}
    By the geometric realization and cubical complex adjunction, we have a bijection between commutative squares
    \[ \begin{tikzcd}
        \bd \cube{n} \ar[r] \ar[d] & \cSing \{ \ast \} \ar[d, "x"] \\
        \cube{n} \ar[r] & \cSing X
    \end{tikzcd} \leftrightarrow \begin{tikzcd}
        \creali{\bd \cube{n}} \ar[r] \ar[d] & \{ \ast \} \ar[d, "x"] \\
        \creali{\cube{n}} \ar[r] & X
    \end{tikzcd} \]
    natural in $(X,x)$.
    Observe $\cSing \{ \ast \} \cong \cube{0}$.
    As $\creali{\bd \cube{n}} \cong S^n$ and $\creali{\cube{n}} \cong D^n$, this gives a natural bijection between maps $D^n \to X$ which send the boundary to $\{ x \}$ and maps $\cube{n} \to \cSing X$ which send the boundary to $\{ x \}$.
    That is, a natural bijection between pointed maps $S^{n+1} \to X$ and relative maps $(\cube{n}, \bd \cube{n}) \to (\cSing X, x)$.

    Again by adjointness, we have a bijection between commutative squares
    \[ \begin{tikzcd}
        \bd \cube{n} \gprod \cube{1} \ar[r] \ar[d] & \cSing \{ \ast \} \ar[d, "x"] \\
        \cube{n} \gprod \cube{1} \ar[r] & \cSing X
    \end{tikzcd} \leftrightarrow \begin{tikzcd}
        \creali{\bd \cube{n} \gprod \cube{1}} \ar[r] \ar[d] & \{ \ast \} \ar[d, "x"] \\
        \creali{\cube{n} \gprod \cube{1}} \ar[r] & X
    \end{tikzcd} \]
    natural in $(X,x)$.
    By \cref{thm:creali_to_sreali} and \cite[Prop.~1.29]{doherty-kapulkin-lindsey-sattler}, the functor $\creali{\uvar} \from \cSet \to \Top$ takes geometric products to products.
    Thus, we have a natural bijection between relative homotopies of maps $\cube{n} \to \cSing X$ which send the boundary to $\{x\}$ and based homotopies between based maps $S^{n+1} \to X$.
\end{proof}
We have an analogous result for geometric realization.
\begin{theorem}
    Let $(X,x)$ be a pointed Kan complex.
    We have an isomorphism $\pi_n (X, x) \cong \pi_n (\reali{X}, x)$ natural in $(X,x)$.
\end{theorem}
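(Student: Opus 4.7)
The plan is to combine the preceding theorem (\cref{thm:pi_n_sing}), which identifies topological homotopy groups with cubical homotopy groups via $\cSing$, with the fact that the unit of the $\creali{\uvar} \adj \cSing$ adjunction is a homotopy equivalence on pointed Kan complexes.

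More concretely, I would first apply \cref{thm:pi_n_sing} to the pointed space $(\creali{X}, x)$ to obtain a natural isomorphism
\[ \pi_n(\creali{X}, x) \iso \pi_n(\cSing \creali{X}, x). \]
Next, by \cref{thm:unit_equiv}, the unit map $\eta_{(X,x)} \from (X, x) \to (\cSing \creali{X}, x)$ is a pointed homotopy equivalence, since $(X,x)$ is assumed to be a pointed Kan complex. Then \cref{thm:pi_n_we_to_iso} produces an isomorphism
\[ \pi_n(\eta_{(X,x)}) \from \pi_n(X, x) \xrightarrow{\iso} \pi_n(\cSing \creali{X}, x). \]
Composing the inverse of the first isomorphism with the second yields the desired isomorphism $\pi_n(X, x) \iso \pi_n(\creali{X}, x)$.

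For naturality in $(X,x)$, I would note that the isomorphism from \cref{thm:pi_n_sing} is natural in its argument, while the unit $\eta$ is a natural transformation $\id \Rightarrow \cSing \creali{\uvar}$; hence both constituent isomorphisms assemble into natural transformations on the category of pointed Kan complexes, and so does their composite. There should not be a major obstacle here: the theorem is essentially a formal consequence of the Quillen equivalence $\creali{\uvar} \adj \cSing$ (\cref{thm:pcset_ptop_equiv}) together with the already-established compatibility of cubical singular homotopy groups with topological ones. The only subtlety worth mentioning explicitly is that one needs $(X,x)$ to be (pointed) Kan in order to invoke \cref{thm:unit_equiv}, which is precisely the hypothesis of the theorem.
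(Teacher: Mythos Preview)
Your proposal is correct and essentially identical to the paper's proof: both arguments combine \cref{thm:pi_n_sing} applied to $(\creali{X}, x)$ with \cref{thm:unit_equiv} and \cref{thm:pi_n_we_to_iso}, differing only in the order in which the two isomorphisms are written down.
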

\begin{proof}
    This is a straightforward calculation.
    \[ \begin{array}{r l l}
        \pi_n (X,x) & \cong \pi_n (\Sing \reali{X}, x) & \text{by \cref{thm:unit_equiv,thm:pi_n_we_to_iso}} \\
        & \cong \pi_n (\reali{X}, x) & \text{by \cref{thm:pi_n_sing}}
    \end{array} \]
\end{proof}

\section{Classical Results} \label{sec:results}

\subsection{Preservation of products}
\begin{proposition} \label{thm:pi_n_prod}
    For $n \geq 0$, the homotopy group functor $\pi_n \from \pt{\cSet} \to \Grp$ preserves products.
\end{proposition}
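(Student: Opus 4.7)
The plan is to split the proof into the Kan-complex case and the general case, with the bulk of the work in the former. The Kan case combines two ingredients: (i) \cref{thm:loopsp_lim} already tells us that the loop space functor preserves limits (in particular products), so iterating gives $\loopsp[n](X \times Y, (x,y)) \cong \loopsp[n](X,x) \times \loopsp[n](Y,y)$; (ii) the connected components functor $\pi_0$ preserves products of cubical sets. Fact (ii) is elementary: 0-cubes of $A \times B$ are pairs of 0-cubes, and a 1-cube between two such pairs is the same data as a 1-cube in $A$ together with a 1-cube in $B$, so the equivalence relation generating $\pi_0$ decomposes as a product. Composing the two isomorphisms produces the bijection $\pi_n(X \times Y, (x,y)) \cong \pi_n(X,x) \times \pi_n(Y,y)$, and naturality is transparent since both underlying isomorphisms are natural.

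To promote this bijection to an isomorphism of groups (for $n \geq 1$), I would observe that the concatenation construction is pointwise: a filler of the open box $\obox{2}{2,1} \to \loopsp[n-1](X \times Y, (x,y))$ is the same data as a pair of fillers, one into $\loopsp[n-1](X,x)$ and one into $\loopsp[n-1](Y,y)$, using that limits commute with $\rhom(\cube{1}, \uvar)$. Hence for representatives $f,f'$ in $\loopsp[n-1](X,x)$ and $g,g'$ in $\loopsp[n-1](Y,y)$, the concatenation square $\cpsq{(f,g), (f',g')}$ may be chosen as the pair $(\cpsq{f,f'}, \cpsq{g,g'})$, so the multiplication on $\pi_n(X \times Y)$ agrees with the componentwise multiplication on $\pi_n(X) \times \pi_n(Y)$ under the bijection.

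For general pointed cubical sets $(X,x)$ and $(Y,y)$, we compute via fibrant replacement. The product $RX \times RY$ of fibrant replacements is itself Kan: fibrations in $\cSet$ are stable under pullback, so $X \times Y \to X \to \cube{0}$ being a composite of pullbacks of fibrations shows that any product of Kan complexes is Kan. Moreover, the natural map $X \times Y \to RX \times RY$ is a weak equivalence, because Cartesian products in $\cSet$ preserve weak equivalences (a standard property of Cisinski model structures on presheaf categories). Thus $RX \times RY$ is a valid fibrant replacement of $X \times Y$, and \cref{thm:fibr_repl_equiv,thm:pi_n_we_to_iso} let us use it in place of $R(X \times Y)$ when computing $\pi_n$, reducing the claim to the Kan case already handled.

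The main obstacle I anticipate is the step that the Cartesian product of weak equivalences in $\cSet$ is a weak equivalence. This follows from general features of the model structure not emphasized earlier in the paper, so I would either import it as a black box or, if a self-contained argument is preferred, prove it by using the characterization of weak equivalences via $[\uvar, Z]$ for Kan complexes $Z$ together with a suitable filler argument. Everything else reduces to combining facts already assembled in \cref{sec:groups}.
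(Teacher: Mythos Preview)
Your argument is correct, but the paper takes a shorter route. Rather than iterating \cref{thm:loopsp_lim} and then arguing that $\pi_0$ preserves products, the paper appeals directly to \cref{thm:pi_n_as_quotient}: since $\pi_n(X,x) \cong [(\cube{n}/\bd\cube{n},[0]),(X,x)]_*$, and pointed maps (respectively pointed homotopies) into a product are exactly tuples of pointed maps (respectively homotopies) into the factors, the bijection drops out immediately---and for an arbitrary family $\{X_i\}_{i \in I}$, not just binary products. More significantly, the paper observes that the comparison map $\pi_n(\prod_i X_i) \to \prod_i \pi_n(X_i)$ is the one induced by the projections and is therefore \emph{automatically} a group homomorphism, since $\pi_n$ is already a functor to $\Grp$; your componentwise-concatenation check, while correct, is doing work that functoriality gives for free. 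Your reduction of the general case to the Kan case via $RX \times RY$ is a sensible supplement---the paper's own proof tacitly stays in the Kan setting, since \cref{thm:pi_n_as_quotient} is only stated there---but the obstacle you flag (that the Cartesian product preserves weak equivalences in $\cSet$) is real and would indeed need to be imported or proven separately.
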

\begin{proof}
    Fix a family of cubical sets $\{ X_i \}_{i \in I}$ and maps $x_i \from \cube{0} \to X_i$.
    We have a group homomorphism $\pi_n (\prod\limits_{i \in I} X_i) \to \prod\limits_{i \in I} \pi_n (X_i, x_i)$ by the universal property of the product.
    From \cref{thm:pi_n_as_quotient}, one deduces this map is a bijection, hence a group isomorphism.
\end{proof}

\subsection{Higher homotopy groups are abelian}
In this subsection, we show that higher homotopy groups of a Kan complex are abelian.
Following the analogous proof for topological spaces, we will prove this by defining two multiplication operations and showing they satisfy the interchange law.
From this, the result follows by the Eckmann-Hilton argument.

For $f, g \from \cube{0} \to \loopsp[2](X,x)$, recall a composition square for $f$ and $g$ is a filler for the map $\obox{3}{3,0} \to X$ defined as follows.
\[ \begin{array}{l l}
    \face{}{1,0} = x\degen{1}{}\degen{2}{} & \face{}{1,1} = x\degen{1}{}\degen{2}{} \\
    \face{}{2,0} = f & \face{}{2,1} = x\degen{1}{}\degen{2}{} \\
    & \face{}{3,1} = g
\end{array} \qquad \begin{tikzcd}[arrows=equal]
    x \ar[rr] \ar[ddd] \ar[rd] && x \ar[rd] \ar[ddd] && \\
    & x \ar[rr, crossing over] && x \ar[ddd] \\
    \\
    x \ar[rr] \ar[rd] \ar[rrrd, phantom, "g" description] && x \ar[rd]  && \\
    & x \ar[from=uuu, crossing over] \ar[from=luuuu, to=ur, phantom, "f" description, yshift=-2ex, xshift=2ex] \ar[rr] && x
\end{tikzcd} \] 
More generally, we may define a composition cube for squares $f, g \from \cube{2} \to X$ such that $f\face{}{2,1} = g\face{}{2,0}$.
\[ \begin{tikzcd}
    a \ar[r] \ar[d] \ar[rd, phantom, "f" description] & b \ar[d] \\
    k \ar[r] \ar[d] \ar[rd, phantom, "g" description] & l \ar[d] \\
    x \ar[r] & y
\end{tikzcd} \]
Such a composition cube is a filler for the map $\obox{3}{3,0} \to X$ defined by following faces.
\[ \begin{array}{l l}
    \face{}{1,0} := \cpsq{f\face{}{1,0}, g\face{}{1,0}} & \face{}{1,1} := \cpsq{f\face{}{1,1}, g\face{}{1,1}} \\
    \face{}{2,0} := f & \face{}{2,1} := g\face{}{2,1}\degen{}{2} \\
    & \face{}{3,1} := g
\end{array} \qquad \begin{tikzcd}
    a \ar[rr] \ar[rd] \ar[ddd] && b \ar[rd] \ar[ddd] & \\
    & x \ar[rr, crossing over] && y \ar[ddd, equal] \\
    \\
    k \ar[rr] \ar[rd] \ar[rrrd, phantom, "g" description] && l \ar[rd] & \\
    & x \ar[rr] \ar[from=uuu, equal, crossing over] \ar[from=uuuul, to=ur, phantom, "f" description, yshift=-2ex, xshift=2ex] && y
\end{tikzcd} \] 
Here, $\cpsq{f\face{}{1,0}, g\face{}{1,0}}$ and $\cpsq{f\face{}{1,1}, g\face{}{1,1}}$ are any fillers for the maps $\obox{2}{2,0} \rightrightarrows X$ depicted below.
\[ \begin{tikzcd}
    a \ar[d, "f\face{}{1,0}"'] & x \ar[d, equal] \\
    k \ar[r, "g\face{}{1,0}"] & x
\end{tikzcd} \qquad \begin{tikzcd}
    b \ar[d, "f\face{}{1,1}"'] & y \ar[d, equal] \\
    l \ar[r, "g\face{}{1,1}"] & y
\end{tikzcd} \]
\cref{thm:htpy_obox} shows this composition of $f$ and $g$ is well-defined up to homotopy.
In the case where the boundary of $f$ and $g$ are degenerate at $\{ x \}$, this is exactly the multiplication in $\loopsp[2](X,x)$.
There is also a composition for maps $f, h \from \cube{2} \to X$ such that $f\face{}{1,1} = h\face{}{1,0}$.
\[ \begin{tikzcd}
    a \ar[r] \ar[d] \ar[rd, phantom, "f" description] & b \ar[r] \ar[d] \ar[rd, phantom, "h" description] & z \ar[d] \\
    k \ar[r] & l \ar[r] & w
\end{tikzcd} \]
This composition is given via a filler for the map $\obox{3}{2,0} \to X$ defined by the following faces.
\[ \begin{array}{l l}
    \face{}{1,0} := f & \face{}{1,1} := h\face{}{1,1}\degen{}{1} \\
    & \face{}{2,1} := h \\
    \face{}{3,0} = \cpsq{f\face{}{2,0}, h\face{}{2,0}} & \face{}{3,1} = \cpsq{f\face{}{2,1}, h\face{}{2,1}}
\end{array} \qquad \begin{tikzcd}
    a \ar[rr] \ar[rd] \ar[ddd] \ar[rdddd, phantom, "f" description] && z \ar[rd, equal] \ar[ddd] & \\
    & b \ar[rr, crossing over] \ar[rrddd, "h" description, phantom, xshift=-3ex, yshift=1ex] && z \ar[ddd] \\
    \\
    k \ar[rr] \ar[rd] && w \ar[rd, equal] & \\
    & l \ar[rr] \ar[from=uuu, crossing over] && w
\end{tikzcd} \]
Here, $\cpsq{f\face{}{2,0}, h\face{}{2,0}}$ and $\cpsq{f\face{}{2,1}, h\face{}{2,1}}$ are any squares which witness composition, as before.
Let $[f] \hcirc [h]$ denote (the homotopy class of) this horizontal composition and $[f] \vcirc [g]$ denote the previous vertical composition.
We show that these compositions satisfy the interchange law.
\begin{proposition} \label{thm:interchange}
    Let $X$ be a Kan complex with the following diagram of cubes.
    \[ \begin{tikzcd}[sep = large]
        \cdot \ar[r, "a_1"] \ar[d, "b_1"'] \ar[rd, phantom, "f_1" description] & \cdot \ar[r, "a_2"] \ar[d] \ar[rd, phantom, "f_2" description] & \cdot \ar[d, "b_3"] \\
        \cdot \ar[r, "a_3" description] \ar[d, "b_2"'] \ar[rd, phantom, "g_1" description] & \cdot \ar[r, "a_4" description] \ar[d] \ar[rd, phantom, "g_2" description] & \cdot \ar[d, "b_4"] \\
        \cdot \ar[r, "a_5"'] & \cdot \ar[r, "a_6"'] & \cdot
    \end{tikzcd} \]
    We have that
    \[ ([g_2] \vcirc [f_2]) \hcirc ([g_1] \vcirc [f_1]) = ([g_2] \hcirc [g_1]) \vcirc ([f_2] \hcirc [f_1]). \]
\end{proposition}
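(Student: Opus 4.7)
The strategy is to construct a single 4-cube $W \from \cube{4} \to X$ that simultaneously encodes both sides of the interchange law, and to extract a relative homotopy from one of its 3-faces.

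First, invoke the Kan condition four times to build the intermediate composition 3-cubes: $V_1, V_2 \from \cube{3} \to X$ are fillers of $\obox{3}{3,0}$-shaped open boxes whose $\face{}{3,0}$-faces realize the vertical composites $g_1 \vcirc f_1$ and $g_2 \vcirc f_2$, respectively; $H_1, H_2 \from \cube{3} \to X$ are fillers of $\obox{3}{2,0}$-shaped open boxes whose $\face{}{2,0}$-faces realize the horizontal composites $f_2 \hcirc f_1$ and $g_2 \hcirc g_1$, respectively. The side 2-faces of these 3-cubes are themselves concatenation squares for the shared 1-cube edges of $f_i$ and $g_i$, fixed once and for all.

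Next, assemble $V_1, V_2, H_1, H_2$, together with appropriate degenerate 3-cubes (built using $\degen{}{k}$ and $\conn{}{k,\varepsilon}$), into a map $\obox{4}{i,\varepsilon} \to X$ for some chosen pair $(i,\varepsilon)$. The open 4-box has seven 3-dimensional faces; we place the four constructed cubes on four of them and fill the remaining three with degenerations and connection cubes carrying the necessary compatibility data. Applying the Kan condition produces a 4-cube filler $W \from \cube{4} \to X$, and by design its missing face $W\face{}{i,\varepsilon} \from \cube{3} \to X$ has the left- and right-hand composite squares on two opposite 2-faces, with its remaining 2-faces being compatibility squares reconciling the boundary 1-cubes of the two composites. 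This 3-cube is then the sought relative homotopy, establishing the interchange law.

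\textbf{Main obstacle.} The principal difficulty is the assembly in the second step: for each of the seven 3-faces of the open 4-box, we must identify which previously constructed 3-cube (or canonical degeneration or connection) occupies it, and verify via the cubical identities that adjacent 3-faces agree on the 2-face where they meet. This is careful cubical bookkeeping and may require introducing auxiliary 2-dimensional concatenation squares, filled by further applications of the Kan property, to bridge boundary edges that would otherwise fail to match between the $V_i$ and the $H_j$. Once the open 4-box is consistently specified, the 4-dimensional Kan filling and the extraction from $W\face{}{i,\varepsilon}$ complete the argument.
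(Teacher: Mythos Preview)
Your overall strategy is exactly the paper's: assemble an open 4-box from composition 3-cubes and degenerate cubes, fill it by the Kan condition, and read the result off the missing face. The discrepancy is in your face count and in what the missing face actually delivers.

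With only the four cubes $V_1, V_2, H_1, H_2$ and three degeneracies, the missing 3-face cannot carry representatives of \emph{both} sides of the interchange on opposite 2-faces, because neither side has yet been built: the $V_i$ and $H_j$ produce only the intermediate single composites, not the iterated ones. Each 2-face of the missing 3-cube is shared with one of the seven specified 3-faces of the open 4-box, and none of those seven carries a full double composite as a 2-face, so neither can appear on the missing face.

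The paper uses \emph{five} composition 3-cubes and two degeneracies on $\obox{4}{4,0}$: your $V_1, V_2, H_1, H_2$ together with one further 3-cube $\cpsq{(f_1 \hcirc f_2) \vcirc (g_1 \hcirc g_2)}$ representing one chosen side of the interchange. The missing face $\face{}{4,0}$ is then not a relative homotopy between two pre-built representatives; rather it is itself a horizontal-composition 3-cube whose composite 2-face is the chosen representative of one side, thereby witnessing that this 2-cube is \emph{also} a representative of the other side. Since both sides are defined only up to homotopy, this suffices. Your plan goes through once you add this fifth 3-cube and adjust what you expect the filled face to show.
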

\begin{proof}
    We fix composition squares $\cube{2} \to X$ for each of the three horizontal and three vertical compositions of 1-cubes in the diagram.
    This lets us define a map $\obox{4}{4,0} \to X$ by the following faces.
    \[ \begin{array}{l l}
        \face{}{1,0} := \cpsq{f_1 \vcirc g_1} & \face{}{1,1} := \cpsq{ b_3 b_4}\degen{}{1} \\
        \face{}{2,0} := \cpsq{(f_1 \hcirc f_2) \vcirc (g_1 \hcirc g_2)} & \face{}{2,1} := \cpsq{f_2 \vcirc g_2} \\
        \face{}{3,0} := \cpsq{f_1 \hcirc f_2} & \face{}{3,1} := \cpsq{a_5 a_6}\degen{}{3} \\
        & \face{}{4,1} := \cpsq{g_1 \hcirc g_2} \\
    \end{array} \]
    One verifies that this is a valid map (i.e.~that $\face{}{j,\varepsilon'}\face{}{i,\varepsilon} = \face{}{i+1,\varepsilon}\face{}{j,\varepsilon'}$ for $1 \leq j \leq i \leq 4$ and $\varepsilon, \varepsilon' = 0, 1$).
    This map has a filler $\cube{4} \to X$ as $X$ is Kan.
    The $\face{}{4,0}$-face of this map witnesses $(f_1 \hcirc f_2) \vcirc (g_1 \hcirc g_2)$ as a horizontal composition of $f_1 \vcirc g_1$ and $f_2 \vcirc g_2$, as desired. 
\end{proof}
The standard Eckmann-Hilton argument shows that higher homotopy groups are abelian.
\begin{corollary}
    Let $(X,x)$ be a pointed Kan complex.
    We have that $\pi_n(X,x)$ is abelian for $n \geq 2$. \noproof
\end{corollary}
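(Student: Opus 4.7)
The plan is to apply the Eckmann--Hilton argument, using \cref{thm:interchange} as the interchange law and our explicit description of $\pi_n$ as a set of composable $2$-cubes.

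First, I would reduce the statement to the case $n=2$. By \cref{thm:loopn_flipn}, $\pi_n(X,x) = \pi_2(\loopsp[n-2](X,x))$ as groups (the group structure on $\pi_n$ was defined as that of $\pi_1$ of an iterated loop space, which in turn agrees with $\pi_2$ of the loop space once shifted). Since $\loopsp$ preserves Kan complexes (\cref{thm:loopsp_kan}), $\loopsp[n-2](X,x)$ is again a pointed Kan complex, so it suffices to prove $\pi_2(X,x)$ is abelian for every pointed Kan complex.

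Next I would set up the two operations on $\pi_2(X,x)$. By \cref{thm:loop_space_is_cube_maps}, a $0$-cube of $\loopsp[2](X,x)$ is a $2$-cube $f \from \cube{2} \to X$ whose entire boundary is degenerate at $x$. The vertical composition $\vcirc$ from the paragraph preceding \cref{thm:interchange} reduces on such $f, g$ to the concatenation-square construction used to multiply $0$-cubes of $\loopsp[2](X,x)$, so $[f] \vcirc [g]$ agrees with the group multiplication of $\pi_2(X,x)$. The horizontal composition $\hcirc$ is defined analogously by concatenating in the other cubical direction. Both operations are well-defined on homotopy classes by \cref{thm:htpy_obox}, and a direct inspection of the defining open-box fillers (using $x\degen{1}{}\degen{2}{}$ as a filler whenever one of the inputs is $x\degen{1}{}\degen{2}{}$, exactly as in the unit argument for $\pi_1$) shows that the class $[x\degen{1}{}\degen{2}{}]$ is a two-sided unit for both $\vcirc$ and $\hcirc$.

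Finally I would invoke the Eckmann--Hilton argument. Writing $e = [x\degen{1}{}\degen{2}{}]$, the interchange law from \cref{thm:interchange} gives
\[ [f] \hcirc [g] = ([f] \vcirc e) \hcirc (e \vcirc [g]) = ([f] \hcirc e) \vcirc (e \hcirc [g]) = [f] \vcirc [g], \]
so $\hcirc$ and $\vcirc$ coincide on $\pi_2(X,x)$, and similarly
\[ [f] \vcirc [g] = (e \hcirc [f]) \vcirc ([g] \hcirc e) = (e \vcirc [g]) \hcirc ([f] \vcirc e) = [g] \hcirc [f] = [g] \vcirc [f], \]
which is the desired commutativity.

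The main obstacle, and the only step requiring care, is the verification that the two operations $\hcirc$ and $\vcirc$ are honestly well-defined binary operations on $\pi_2(X,x)$ with the common unit $e$; once this is in place the Eckmann--Hilton string of equalities above is automatic from \cref{thm:interchange}. The well-definedness follows from \cref{thm:htpy_obox} applied to the defining open boxes, and the unit property from exhibiting explicit degenerate fillers (in the spirit of the $f\degen{}{2}$ and $f\conn{}{1,0}$ witnesses used for $\pi_1$).
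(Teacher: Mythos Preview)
Your proposal is correct and follows exactly the approach the paper intends: the paper proves the interchange law in \cref{thm:interchange}, notes explicitly that when the boundaries are degenerate at $x$ the vertical composition $\vcirc$ \emph{is} the group multiplication on $\pi_2(X,x)$, and then simply invokes ``the standard Eckmann--Hilton argument'' without writing it out. Your write-up supplies precisely the details the paper omits (the reduction to $n=2$, the common unit, and the two Eckmann--Hilton chains), so there is nothing to add.
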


    

\subsection{Long exact sequence of a fibration}
As in the case of topological spaces, fibrations give rise to a long exact sequence of homotopy groups.
Our presentation largely follows \cite[Lem.~32]{mather:pullbacks} and \cite[Thm.~3.8.12]{cisinski:higher-categories} as it relies on taking iterated homotopy pullbacks.

We observe the following lemma regarding homotopy pullbacks, as defined in \cref{def:weak-hopb}.
\begin{theorem} \label{thm:hopb_pi_0}
    The functor $\pi_0 \from \pt\Kan \to \pt\Set$ takes homotopy pullbacks to (strict) pullbacks.
\end{theorem}
\begin{proof}
    It is straightforward to verify $\pi_0$ preserves pullbacks along fibrations.
    The result then follows by \cref{thm:hopb-equiv,thm:pi_0_we_to_iso}.
\end{proof}

With this, homotopy fibers induce long exact sequences of homotopy groups.
\begin{theorem} \label{thm:pi_n_htpy_seq}
    A homotopy pullback
    \[ \begin{tikzcd}
        A \ar[r, "i"] \ar[d] \ar[rd, phantom, "\ulcorner" very near start] & X \ar[d, "f", ""{left, name=s}] \\
        \cube{0} \ar[r, "y"', ""{left, name=t}] & Y \ar[from=s, to=t, "H"', bend right]
    \end{tikzcd} \]
    of pointed Kan complexes induces a long exact sequence
    \[ \begin{tikzcd}
        \dots \rar & \pi_n (A,a) \ar[r, "\pi_n i"] & \pi_n (X,x) \ar[r, "\pi_n f"] \ar[d, phantom, ""{coordinate, name=a}] & \pi_n (Y,y) \ar[dll, rounded corners, 
        to path={ -- ([xshift=2ex]\tikztostart.east)
                  |- (a) [near end]\tikztonodes
                  -| ([xshift=-2ex]\tikztotarget.west)
                  -- (\tikztotarget) }] & {} \\
        & \pi_{n-1} (A,a) \ar[r, "\pi_{n-1} i"] & \pi_{n-1} (X,x) \ar[r, "\pi_{n-1} f"] \ar[d, phantom, "\dots"{description, name=b}] & \pi_{n-1} (Y,y) \ar[dll, rounded corners, 
        to path={ -- ([xshift=2ex]\tikztostart.east)
                |- (b) [near end]\tikztonodes }] & \\
        & \pi_1 (A,a) \ar[r, "\pi_1 i"] \ar[from=b, rounded corners, to path={ -| ([xshift=-2ex]\tikztotarget.west)
                -- (\tikztotarget) }] & \pi_1 (X,x) \ar[d, phantom, ""{coordinate, name=c}] \ar[r, "\pi_1 f"] & \pi_1 (Y,y) \ar[dll, rounded corners, 
        to path={ -- ([xshift=2ex]\tikztostart.east)
                |- (c) [near end]\tikztonodes
                -| ([xshift=-2ex]\tikztotarget.west)
                -- (\tikztotarget) }] & {} \\
        & \pi_0 A \ar[r, "\pi_0 i"] & \pi_0 X \ar[r, "\pi_0 f"] & \pi_0 Y
    \end{tikzcd} \]
\end{theorem}
\begin{proof}
    In the diagram,
    \[ \begin{tikzcd}
        \loopsp(X,x) \ar[r, "\loopsp f"] \ar[d] & \loopsp(Y,y) \ar[r] \ar[d, "j", ""{left, name=s1}] & \cube{0} \ar[d] \\
        \cube{0} \ar[r, "{(x, y, fx\degen{}{1})}"', ""{left, name=t1}] & P(f, y) \ar[from=s1, to=t1, bend right, "G"', shorten=1ex] \ar[r, "p_1"] \ar[d] & X \ar[d, "f", ""{left, name=s2}] \\
        {} & \cube{0} \ar[r, "y"', ""{left, name=t2}] & Y \ar[from=s2, to=t2, bend right, "\alpha"', shorten=1ex]
    \end{tikzcd} \tag{$\ast$} \]
    we define the map $j \from \loopsp(Y,y) \to P(f, y)$ and the homotopies $\alpha \from P(f, y) \to \rpath{Y}$ and $G \from \loopsp(X,x) \to \rhom(\cube{1}, P(f, y))$ as follows:
    the map $j$ is induced by the commutative square
    \[ \begin{tikzcd}
        \loopsp(Y,y) \ar[rrd, bend left] \ar[rdd, bend right, "x"'] \ar[rd, dotted, "j"] & {} & {} \\
        {} & P(f, y) \ar[r] \ar[d, "p_1"'] \ar[rd, phantom, "\ulcorner" very near start] & \rhom(\cube{1}, Y) \ar[d, "{(\face{*}{1,0}, \face{*}{1,1})}"] \\
        {} & X \ar[r, "{(f, y)}"] & Y \times Y
    \end{tikzcd} \]
    whereas the map $\alpha$ is the canonical map $P(f, y) \to \rpath{Y}$ and $G$ is induced by the commutative square,
    \[ \begin{tikzcd}
        \loopsp(X,x) \ar[r] \ar[rdd, bend right, ""'] \ar[rd, dotted, "G"] & \rhom(\cube{1}, X) \ar[rd, "f_* \conn{*}{1,0}"] & {} \\
        {} & \rhom(\cube{1}, P(f, y)) \ar[r] \ar[d, "(p_1)_*"'] \ar[rd, phantom, "\ulcorner" very near start] & \rhom(\cube{2}, Y) \ar[d, "{(\face{*}{1,0}, \face{*}{1,1})}"] \\
        {} & \rhom(\cube{1}, X) \ar[r, "{(f_*, y_*}"] & \rhom(\cube{1}, Y) \times \rhom(\cube{1}, Y)
    \end{tikzcd} \]
    where the lower square is a pullback since $\rpath{\uvar}$ preserves limits.

    A composite of the right two squares in $(\ast)$ is given by
    \[ \begin{tikzcd}
        \loopsp(Y,y) \ar[r] \ar[d] & \cube{0} \ar[d, "y", ""{left, name=s}] \\
        \cube{0} \ar[r, "y"', ""{left, name=t}] & Y \ar[from=s, to=t, bend right, "\alpha j"']
    \end{tikzcd} \]
    By construction, $\alpha j$ is the canonical homotopy making $\loopsp(Y,y) = P(y, y)$ a homotopy pullback.
    As the bottom square in $(\ast)$ is a homotopy pullback by assumption, the top-right square is a homotopy pullback by \cref{thm:htpy_two_pb}.

    A composite of the top two squares in $(\ast)$ is given by
    \[ \begin{tikzcd}
        \loopsp(X,x) \ar[r] \ar[d] & \cube{0} \ar[d, "x", ""{name=s, left}] \\
        \cube{0} \ar[r, "x"', ""{left, name=t}] & X \ar[from=s, to=t, "(p_1)_* G"', bend right]
    \end{tikzcd} \]
    By construction, $(p_1)_* G$ is the canonical homotopy making $\loopsp(X,x)$ a homotopy pullback, thus the top-left square in $(\ast)$ is a homotopy pullback by \cref{thm:htpy_two_pb}.
    That is, each square in $(\ast)$ is a homotopy pullback.

    By \cref{thm:loopsp_preserves_hopb}, the square
    \[ \begin{tikzcd}
        \loopsp (A, a) \ar[r, "\loopsp i"] \ar[d] \ar[rd, phantom, "\ulcorner" very near start] & \loopsp (X,x) \ar[d, "f", ""{left, name=s}] \\
        \cube{0} \ar[r, "y"', ""{left, name=t}] & \loopsp(Y, y) \ar[from=s, to=t, bend right, "\loopsp H"']
    \end{tikzcd} \]
    is a homotopy pullback.

    As $\pi_0$ sends weak equivalences to bijections by \cref{thm:pi_0_we_to_iso}, this gives that each square in
    \[ \begin{tikzcd}
        \pi_1(A, a) \ar[r] \ar[d, "\pi_1 i"'] & \{ \ast \} \ar[d] \\
        \pi_1(X, x) \ar[r, "\pi_1 f"] \ar[d] & \pi_1(Y, y) \ar[r] \ar[d] & \{ \ast \} \ar[d] \\
        \{ \ast \} \ar[r] & \pi_0(A, a) \ar[r, "\pi_0 i"] \ar[d] & \pi_0(X, x) \ar[d, "\pi_0 f"] \\
        {} & \{ \ast \} \ar[r] & \pi_0(Y, y)
    \end{tikzcd} \]
    is a pullback of pointed sets by \cref{thm:hopb_pi_0}.
    This gives an exact sequence,
    \[ \begin{tikzcd}
        \pi_1(A, a) \ar[r, "\pi_1 i"] & \pi_1(X, x) \ar[r, "\pi_1 f"] & \pi_1(Y, y) \ar[r] & \pi_0(A, a) \ar[r, "\pi_0 i"] & \pi_0(X, x) \ar[r, "\pi_0 f"] & \pi_0(Y, y)
    \end{tikzcd} \]
    from which the result follows by induction.
\end{proof}
\begin{corollary} \label{thm:pi_n_seq}
    Let $f \from (X,x) \to (Y,y)$ be a map between pointed Kan complexes and $(A, a)$ be the fiber of $f$.
    \[ \begin{tikzcd}
        A \ar[r, "i"] \ar[d] \ar[rd, phantom, "\ulcorner" very near start] & X \ar[d, "f"] \\
        \cube{0} \ar[r] & Y
    \end{tikzcd} \]
    If $f$ is a fibration then there is a long exact sequence
    \[ \begin{tikzcd}
        \dots \rar & \pi_n (A,a) \ar[r, "\pi_n i"] & \pi_n (X,x) \ar[r, "\pi_n f"] \ar[d, phantom, ""{coordinate, name=a}] & \pi_n (Y,y) \ar[dll, rounded corners, 
        to path={ -- ([xshift=2ex]\tikztostart.east)
                  |- (a) [near end]\tikztonodes
                  -| ([xshift=-2ex]\tikztotarget.west)
                  -- (\tikztotarget) }] & {} \\
        & \pi_{n-1} (A,a) \ar[r, "\pi_{n-1} i"] & \pi_{n-1} (X,x) \ar[r, "\pi_{n-1} f"] \ar[d, phantom, "\dots"{description, name=b}] & \pi_{n-1} (Y,y) \ar[dll, rounded corners, 
        to path={ -- ([xshift=2ex]\tikztostart.east)
                |- (b) [near end]\tikztonodes }] & \\
        & \pi_1 (A,a) \ar[r, "\pi_1 i"] \ar[from=b, rounded corners, to path={ -| ([xshift=-2ex]\tikztotarget.west)
                -- (\tikztotarget) }] & \pi_1 (X,x) \ar[d, phantom, ""{coordinate, name=c}] \ar[r, "\pi_1 f"] & \pi_1 (Y,y) \ar[dll, rounded corners, 
        to path={ -- ([xshift=2ex]\tikztostart.east)
                |- (c) [near end]\tikztonodes
                -| ([xshift=-2ex]\tikztotarget.west)
                -- (\tikztotarget) }] & {} \\
        & \pi_0 A \ar[r, "\pi_0 i"] & \pi_0 X \ar[r, "\pi_0 f"] & \pi_0 Y
    \end{tikzcd} \] 
\end{corollary}
\begin{proof}
    The pullback square $(A,a)$ is a homotopy pullback with the constant homotopy by \cref{thm:hopb-def-equiv}.
\end{proof}

\subsection{Whitehead's Theorem}
In this subsection, we prove the following theorem.
\begin{restatable}{theorem}{whitehead} \label{thm:whitehead}
    Let $f \from X \to Y$ be a map between Kan complexes.
    We have that $f$ is a homotopy equivalence if and only if, for all $x \from \cube{0} \to X$ and $n \geq 0$, the map $\pi_n f \from \pi_n(X,x) \to \pi_n(Y, f(x))$ is an isomorphism.
\end{restatable}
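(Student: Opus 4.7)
The forward implication is immediate from \cref{thm:pi_n_we_to_iso}: a homotopy equivalence between pointed Kan complexes induces isomorphisms on all homotopy groups. For the converse, the plan is to transfer the hypothesis along the Quillen equivalence $\adjunct{\creali{\uvar}}{\cSing}{\cSet}{\Top}$ of \cref{thm:cset_top_equiv} and then recover a homotopy equivalence from a weak equivalence via \cref{thm:wequiv_is_htpy_equiv}.

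First, I would combine \cref{thm:unit_equiv,thm:pi_n_we_to_iso,thm:pi_n_sing} into a chain of natural isomorphisms $\pi_n(X, x) \iso \pi_n(\cSing\creali{X}, x) \iso \pi_n(\creali{X}, x)$ for every pointed Kan complex $(X, x)$. By naturality in $(X, x)$, the hypothesis on $f$ transports to the conclusion that $\pi_n \creali{f} \from \pi_n(\creali{X}, x) \to \pi_n(\creali{Y}, \creali{f}(x))$ is an isomorphism for every basepoint $x \in X$ and every $n \geq 0$. This is precisely the defining condition for a weak equivalence in $\Top$ as recorded in the paper, so $\creali{f}$ is a weak equivalence.

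It then remains to pull this back across the Quillen equivalence. Since every topological space is fibrant, Ken Brown's lemma applied to the right Quillen functor $\cSing$ gives that $\cSing \creali{f}$ is a weak equivalence in $\cSet$. Since every cubical set is cofibrant, the Quillen equivalence supplies weak equivalences $\eta_X \from X \weto \cSing \creali{X}$ and $\eta_Y \from Y \weto \cSing \creali{Y}$; two-out-of-three applied to the naturality square of $\eta$ at $f$ then yields that $f$ itself is a weak equivalence in $\cSet$. Since $X$ and $Y$ are both fibrant (being Kan) and cofibrant (all cubical sets are), \cref{thm:wequiv_is_htpy_equiv} upgrades $f$ to a homotopy equivalence.

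No step presents a serious obstacle given the developed machinery; the approach deliberately sidesteps any inductive combinatorial construction of a homotopy inverse by outsourcing the hard content to the classical topological result. The only minor care required is in the $n = 0$ case, where ``$\pi_0 f$ is an isomorphism for every basepoint $x$'' must be read as $\pi_0 f$ being a bijection of underlying sets, but this is precisely the interpretation used in the definition of weak equivalence for $\Top$ recalled earlier.
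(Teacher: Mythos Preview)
Your argument is correct and takes a genuinely different route from the paper's. One small point to tighten: to conclude that $\creali{f}$ is a weak equivalence in $\Top$, you need $\pi_n(\creali{f})$ to be an isomorphism at \emph{every} point of $\creali{X}$, not only at images of $0$-cubes of $X$. This is easily handled---every point of $\creali{X}$ lies in the realization of some cube and is therefore path-connected to a vertex, so the standard change-of-basepoint argument extends the isomorphism to all basepoints---but it deserves a sentence.

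The paper's proof is purely combinatorial and internal to $\cSet$. It first reduces to the case where $f$ is a fibration, then for each $y \in Y$ uses \cref{thm:pi_0_iso_fiber} to find a lift $x$, applies the long exact sequence of the fibration (\cref{thm:pi_n_seq}) to deduce that the fiber over $y$ has trivial homotopy groups in all degrees, invokes \cref{thm:pi_n_as_bd_maps} to see that the fiber has fillers for all boundaries and hence is contractible, and finally uses \cref{thm:we_contr_fib} to conclude that $f$ is a weak equivalence. Your proof instead exports the hypothesis to $\Top$ via the Quillen equivalence---where the $\pi_n$-isomorphism condition is the very \emph{definition} of weak equivalence---and then imports the conclusion back along the unit. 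Your route is shorter and more conceptual, but it leans on the Quillen equivalence \cref{thm:cset_top_equiv}, itself a substantial result, whereas the paper's argument is self-contained and in keeping with its stated aim of giving ``purely combinatorial proofs'' of classical theorems. In particular, the paper's style of argument transfers to settings (such as the discrete homotopy theory of graphs motivating this work) where no comparison with $\Top$ is available.
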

We begin by observing the following lemma about Kan fibrations.
\begin{proposition} \label{thm:we_contr_fib}
    Let $f \from X \to Y$ be a fibration.
    If, for all $y \from \cube{0} \to Y$, the fiber of $y$ is contractible then $f$ is a weak equivalence.
\end{proposition}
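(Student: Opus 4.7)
The plan is to show that $f$ has the right lifting property with respect to every boundary inclusion $\bd\cube{n} \ito \cube{n}$, whence $f$ is an acyclic fibration and in particular a weak equivalence. I proceed by induction on $n$. The base case $n = 0$ reduces to surjectivity of $f$ on 0-cubes, which is immediate since contractible Kan complexes are non-empty.

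For the inductive step $n \geq 1$, given a lifting square with top $s \from \bd\cube{n} \to X$ and bottom $v \from \cube{n} \to Y$, I first invoke the Kan fibration property of $f$ to fill the restricted open box $\restr{s}{\obox{n}{n,1}}$, obtaining $\tilde u \from \cube{n} \to X$ with $\restr{\tilde u}{\obox{n}{n,1}} = \restr{s}{\obox{n}{n,1}}$ and $f\tilde u = v$. The only remaining discrepancy is between the $(n-1)$-cubes $h := \tilde u\face{}{n,1}$ and $k := s\face{}{n,1}$, which both lie over $v\face{}{n,1}$ under $f$ and, by the cubical identities, agree on $\bd\cube{n-1}$. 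The task becomes producing a relative homotopy $W \from \cube{n-1} \gprod \cube{1} \to X$ from $h$ to $k$ that is constant on $\bd\cube{n-1}$ with $fW = v\face{}{n,1}\degen{}{n}$. Once $W$ is in hand, I assemble $\tilde u$, the reflexivity of $\restr{s}{\obox{n}{n,1}}$, and $W$ into an $\obox{n+1}{n+1,1}$-shaped lifting problem over $v\degen{}{n+1}$; the Kan fibration property yields a filler $H \from \cube{n+1} \to X$, and a routine check of cubical identities verifies that $u := H\face{}{n+1,1}$ satisfies $\restr{u}{\bd\cube{n}} = s$ and $fu = v$.

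The crux of the argument is producing $W$. Equivalently, any two sections of the pullback fibration $P := v\face{}{n,1}^* X \to \cube{n-1}$ agreeing on $\bd\cube{n-1}$ must be relatively homotopic, and this follows if $P$ is a contractible Kan complex. Applying the long exact sequence (\cref{thm:pi_n_seq}) to $P \to \cube{n-1}$ with contractible fibers and contractible base should give $\pi_* P = 0$, and then \cref{thm:pi_n_as_bd_maps} (together with non-emptiness of $P$) implies every pointed map $\bd\cube{k} \to P$ extends to $\cube{k} \to P$, which yields the required $W$.

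The main obstacle I anticipate is that \cref{thm:pi_n_seq} is stated for fibrations between pointed Kan complexes, whereas the representable $\cube{n-1}$ is not a Kan complex in general. Making the argument rigorous will therefore require either passing through a Kan fibrant replacement of the base, or a more careful direct induction that constructs $W$ without invoking the long exact sequence.
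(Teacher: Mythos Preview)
Your reduction is sound up to the point where you need to produce $W$, and you have correctly identified the obstruction: \cref{thm:pi_n_seq} is stated for fibrations between pointed Kan complexes, and $\cube{n-1}$ is not Kan. There is a second, related problem you did not flag: the pullback $P = v\face{}{n,1}^* X$ is only a fibration over $\cube{n-1}$, so $P$ itself need not be a Kan complex, and hence \cref{thm:pi_n_as_bd_maps} (which you invoke to pass from $\pi_* P = 0$ to the extension property) does not apply to $P$ either. Repairing this via fibrant replacement is possible but delicate: you must extend the fibration over the replacement and then argue that the relevant fibers remain contractible, which is essentially as hard as the original statement. In effect your reduction trades one boundary-lifting problem against $\bd\cube{n}\ito\cube{n}$ for another of the same dimension, so the induction is not doing real work.

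The paper's argument sidesteps all of this by homotoping the \emph{entire} target square into a single fiber rather than just one face. Concretely: use the contraction of $\cube{n}$ (\cref{thm:cube_n_contr}) to obtain an elementary homotopy $w\from \cube{n}\gprod\cube{1}\to Y$ from $v$ to the constant map at a vertex $y_0$; lift $w$ along the fibration $f$ over the anodyne inclusion $\bd\cube{n}\gprod\{0\}\ito\bd\cube{n}\gprod\cube{1}$, so that the boundary $u$ is carried to some $u'\from\bd\cube{n}\to X$ landing entirely in the fiber $A$ over $y_0$; since $A\to\cube{0}$ is an acyclic fibration by hypothesis, $u'$ extends to $h\from\cube{n}\to A\ito X$; finally, $h$ together with the lifted homotopy assemble into an open box over $w$, and one more Kan lift against $f$ transports the filler back to a lift of the original square. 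No induction, no long exact sequence, and no auxiliary Kan hypotheses are needed.
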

\begin{proof}
    We show that $f$ has the right lifting property with respect to boundary inclusions.
    Fix a commutative square
    \[ \begin{tikzcd}
        \bd \cube{n} \ar[r, "u"] \ar[d] & X \ar[d, "f", two heads] \\
        \cube{n} \ar[r, "v"] & Y
    \end{tikzcd} \]
    and let $y_0$ denote the map $v\face{}{n,1}\face{}{n-1,1}\dots\face{}{1,1} \from \cube{0} \to Y$.
    
    As $Y$ is a Kan complex, \cref{thm:cube_n_contr} gives an elementary homotopy $w \from \cube{n+1} \to Y$ from $v \from \cube{n} \to Y$ to the composite $\cube{n} \to \cube{0} \xrightarrow{y_0} Y$.
    Thus, we have the following commutative square.
    \[ \begin{tikzcd}[column sep = small]
        \bd \cube{n} \gprod \{ 0 \} \ar[d] \ar[rr, "u"] && X \ar[d, "f", two heads] \\
        \bd \cube{n} \gprod \cube{1} \ar[r, hook] & \cube{n} \gprod \cube{1} \ar[r, "w"] & Y
    \end{tikzcd} \]
    As $f$ is a fibration, it has lifts against anodyne maps.
    The left leg of this square $\bd \cube{n} \gprod \{ 0 \} \ito \bd \cube{n} \gprod \cube{1}$ is anodyne by \cref{thm:pp_anodyne}.
    Let $G \from \bd \cube{n} \gprod \cube{1} \to X$ be the lift of this square and $u' \from \bd \cube{n} \to X$ be the restriction of $G$ to the endpoint $\bd \cube{n} \gprod \{ 1 \}$.

    Let $A$ be the fiber of $y_0$ under $f$.
    \[ \begin{tikzcd}
        A \ar[r] \ar[d, two heads, "\sim"'] \ar[rd, phantom, "\ulcorner" very near start] & X \ar[d, "f", two heads] \\
        \cube{0} \ar[r, "y_0"] & Y
    \end{tikzcd} \]
    The map $A \overset{\sim}{\twoheadrightarrow} \cube{0}$ is a fibration as it is a pullback of a fibration.
    It is a weak equivalence by assumption, hence an acyclic fibration.
    By construction, $u' \from \bd \cube{n} \to X$ factors through the map $A \to X$.
    \[ \begin{tikzcd}
        \bd \cube{n} \ar[rdd, bend right] \ar[rrd, "u'", bend left] \ar[rd, dotted] \\
        & A \ar[r] \ar[d, two heads, "\sim"'] & X \ar[d, "f", two heads] \\
        & \cube{0} \ar[r, "y_0"] & Y
    \end{tikzcd} \]
    This gives the following commutative square.
    \[ \begin{tikzcd}
        \bd \cube{n} \ar[r] \ar[d, hook] & A \ar[d, two heads, "\sim"] \\
        \cube{n} \ar[r] & \cube{0}
    \end{tikzcd} \]
    As the left map is a monomorphism and the right map is an acyclic fibration, this square has a lift.
    Let $h \from \cube{n} \to X$ be the composite of this lift $\cube{n} \to A$ with the map $A \to X$.

    We now have the following commutative square.
    \[ \begin{tikzcd}
        \cube{n} \gprod \{ 1 \} \cup_{\bd \cube{n} \gprod \{ 1 \}} \bd \cube{n} \gprod \cube{1} \ar[d] \ar[r, "{[h, G]}"] & X \ar[d, "f", two heads] \\
        \cube{n} \gprod \cube{1} \ar[r, "w"] & Y
    \end{tikzcd} \]
    The left map is anodyne by \cref{thm:pp_anodyne}, thus this square has a lift $\cube{n} \gprod \cube{1} \to X$.
    The restriction of this lift to $\cube{n} \gprod \{ 0 \}$ is exactly a lift of the starting square.
\end{proof}

As well, we observe the following lemma regarding lifting a point against a fibration.
\begin{lemma} \label{thm:pi_0_iso_fiber}
    Let $f \from X \to Y$ be a fibration between Kan complexes.
    If $\pi_0 f \from \pi_0 X \to \pi_0 Y$ is a bijection then, for any $y \from \cube{0} \to Y$, there exists $x \from \cube{0} \to X$ such that $fx = y$.
\end{lemma}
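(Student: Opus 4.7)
The plan is to exploit the surjectivity of $\pi_0 f$ to find a candidate point $x' \in X$ lying over the component $[y]$, upgrade the resulting zig-zag relationship between $f(x')$ and $y$ to a single $1$-cube, and then lift that $1$-cube through the fibration $f$ to obtain an honest preimage of $y$.

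First, since $\pi_0 f$ is a bijection it is in particular surjective, so I can pick $x' \from \cube{0} \to X$ with $[f(x')] = [y]$ in $\pi_0 Y$. Because $Y$ is a Kan complex, elementary homotopy is an equivalence relation on maps $\cube{0} \to Y$ by \cref{thm:htpy_eq_rel}, so the equality $[f(x')] = [y]$ in $\pi_0 Y$ is realized by a single $1$-cube $v \from \cube{1} \to Y$ with $v\face{}{1,0} = f(x')$ and $v\face{}{1,1} = y$.

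Next, I would set up the lifting problem
\[ \begin{tikzcd}
\{0\} \ar[r, "x'"] \ar[d, hook] & X \ar[d, "f"] \\
\cube{1} \ar[r, "v"'] & Y
\end{tikzcd} \]
and note that the left vertical map is the open box inclusion $\obox{1}{1,1} \ito \cube{1}$, hence anodyne. Since $f$ is a Kan fibration, a lift $u \from \cube{1} \to X$ exists, and setting $x := u\face{}{1,1}$ yields $f(x) = v\face{}{1,1} = y$, as required.

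I do not anticipate any real obstacle here: the argument is essentially a one-dimensional Kan-lifting after translating the surjectivity of $\pi_0 f$ into a concrete $1$-cube. The only point to be careful about is using that $Y$ is Kan (so that a zig-zag of $1$-cubes collapses to a single elementary homotopy), which is exactly the content of \cref{thm:htpy_eq_rel}; injectivity of $\pi_0 f$ is not needed.
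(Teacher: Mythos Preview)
Your proof is correct and follows essentially the same argument as the paper: use surjectivity of $\pi_0 f$ to find $x'$ with $[f(x')]=[y]$, invoke that $Y$ is Kan to realize this by a single $1$-cube, and lift along the fibration via the open box inclusion $\{0\}\ito\cube{1}$. Your explicit remark that only surjectivity of $\pi_0 f$ is used is accurate.
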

\begin{proof}
    Fix $y \from \cube{0} \to Y$.
    We have a 0-cube $x' \from \cube{0} \to X$ of $X$ such that $[fx'] = [y]$ since $\pi_0 f$ is a bijection.
    This gives a 1-cube $u \from \cube{1} \to Y$ of $Y$ such that $u\face{}{1,0} = fx'$ and $u\face{}{1,1} = y$ as $Y$ is Kan.
    As $f \from X \to Y$ is a fibration, we have a lift $\cube{1} \to X$ for the following square.
    \[ \begin{tikzcd}
        \cube{0} \ar[d, hook] \ar[r, "x'"] & X \ar[d, two heads, "f"] \\
        \cube{1} \ar[r, "u"] \ar[ur, dotted] & Y
    \end{tikzcd} \]
    Let $x$ denote the $\face{}{1,1}$-face of this lift.
    By construction, $fx = y$.
\end{proof}

With this, we may prove Whitehead's theorem for Kan complexes.
\begin{proof}[Proof of \cref{thm:whitehead}]
    We show the converse, as the forward direction is proven in \cref{thm:pi_n_we_to_iso}.

    Without loss of generality, we may assume $f$ is a fibration.
    This is because, given an arbitrary map $g$, we may factor it as a trivial cofibration $i$ followed by a fibration $f$.
    The maps $g$ and $i$ induce isomorphisms on all homotopy groups (by assumption and by \cref{thm:pi_n_we_to_iso}, respectively), thus $f$ does by two-out-of-three.
    To show $g$ is a homotopy equivalence, it suffices to show $f$ is.

    Fix $y \from \cube{0} \to Y$.
    By \cref{thm:pi_0_iso_fiber}, we have $x \from \cube{0} \to X$ so that $fx = y$.
    Thus, $f$ is a pointed map $(X,x) \to (Y,y)$.
    Let $(A,a)$ be the fiber of $y$.
    \[ \begin{tikzcd}
        A \ar[r] \ar[d, two heads] \ar[rd, phantom, "\ulcorner" very near start] & X \ar[d, "f", two heads] \\
        \cube{0} \ar[r, "y"] & Y
    \end{tikzcd} \]
    By \cref{thm:pi_n_seq}, we have the following long exact sequence.
    \[ \begin{tikzcd}
        \dots \rar & \pi_n (A,a) \rar & \pi_n (X,x) \ar[r, "\pi_n f", "\cong"'] \ar[d, phantom, ""{coordinate, name=a}] & \pi_n (Y,y) \ar[dll, rounded corners, 
        to path={ -- ([xshift=2ex]\tikztostart.east)
                  |- (a) [near end]\tikztonodes
                  -| ([xshift=-2ex]\tikztotarget.west)
                  -- (\tikztotarget) }] & {} \\
        & \pi_{n-1} (A,a) \rar & \pi_{n-1} (X,x) \ar[r, "\pi_{n-1} f", "\cong"'] \ar[d, phantom, "\dots"{description, name=b}] & \pi_{n-1} (Y,y) \ar[dll, rounded corners, 
        to path={ -- ([xshift=2ex]\tikztostart.east)
                  |- (b) [near end]\tikztonodes }] & \\
        & \pi_1 (A,a) \rar \ar[from=b, rounded corners, to path={ -| ([xshift=-2ex]\tikztotarget.west)
               -- (\tikztotarget) }] & \pi_1 (X,x) \ar[d, phantom, ""{coordinate, name=c}] \ar[r, "\pi_1 f", "\cong"'] & \pi_1 (Y,y) \ar[dll, rounded corners, 
        to path={ -- ([xshift=2ex]\tikztostart.east)
                  |- (c) [near end]\tikztonodes
                  -| ([xshift=-2ex]\tikztotarget.west)
                  -- (\tikztotarget) }] & {} \\
        & \pi_0 A \rar & \pi_0 X \ar[r, "\pi_0 f", "\cong"'] & \pi_0 Y
    \end{tikzcd} \]
    From this, we get that $\pi_n (A,a)$ is trivial for all $n \geq 0$.
    By \cref{thm:pi_n_as_bd_maps}, $A$ has fillers for all boundary maps.
    That is, $A \to \cube{0}$ is a trivial fibration.

    By \cref{thm:we_contr_fib}, we conclude that $f$ is a weak equivalence.
    As $X$ and $Y$ are Kan complexes, $f$ is a homotopy equivalence.
\end{proof}

\appendix
\renewcommand{\thesection}{\Alph{section}}

\begin{appendices}

\section{Homotopy Pullbacks} \label{sec:hopullbacks}

In this appendix, we gather a few facts about homotopy pullbacks in a fibration category.
These are well-known in folklore and we claim no originality in this work.

\subsection{Commutative squares in a fibration category}
   \begin{definition}\label{def:hopullback-1}
    A square
    \[ \begin{tikzcd}
        U \ar[r] \ar[d] & X \ar[d] \\
        V \ar[r] & Y
    \end{tikzcd} \]
    in a fibration category $\cat{C}$ is a \emph{homotopy pullback} if given any factorization $X \to \widetilde{X} \to Y$, the induced map $U \to \widetilde{X} \times_Y V$ is a weak equivalence.
   \end{definition}
   
   \begin{lemma} \label{lem:hopb-independent-factorization}
    \cref{def:hopullback-1} does not depend on the choice of factorization.
   \end{lemma}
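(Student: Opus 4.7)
The plan is to compare any two factorizations of $X \to Y$ by constructing a common refinement. Given factorizations $X \xrightarrow{\sim} \widetilde{X}_1 \twoheadrightarrow Y$ and $X \xrightarrow{\sim} \widetilde{X}_2 \twoheadrightarrow Y$, I would first form the pullback $\widetilde{X}_1 \times_Y \widetilde{X}_2$, which exists because $\widetilde{X}_2 \twoheadrightarrow Y$ is a fibration. The universal property induces a canonical map $X \to \widetilde{X}_1 \times_Y \widetilde{X}_2$, which I then factor as $X \xrightarrow{\sim} \widetilde{X}_3 \twoheadrightarrow \widetilde{X}_1 \times_Y \widetilde{X}_2$. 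Since fibrations are closed under pullback and composition, each projection $\widetilde{X}_1 \times_Y \widetilde{X}_2 \twoheadrightarrow \widetilde{X}_i$ is a fibration, so composing gives fibrations $p_i \from \widetilde{X}_3 \twoheadrightarrow \widetilde{X}_i$ for $i = 1, 2$, and further composing with $\widetilde{X}_i \twoheadrightarrow Y$ produces a fibration $\widetilde{X}_3 \twoheadrightarrow Y$ that refines both original factorizations. Two-out-of-three applied to the triangle $X \xrightarrow{\sim} \widetilde{X}_3 \xrightarrow{p_i} \widetilde{X}_i$ (whose diagonal is a weak equivalence by assumption) shows that each $p_i$ is a weak equivalence over $Y$.

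The key step is then to transfer these weak equivalences through the pullback along the bottom map $f \from V \to Y$. By \cref{lem:pb-is-exact}, the functor $f^* \from \cat{C} \fslice Y \to \cat{C} \fslice V$ is exact; since every object of $\cat{C} \fslice Y$ is fibrant (the map to the terminal object $\id[Y]$ is just the defining fibration into $Y$), Ken Brown's lemma for fibration categories implies that $f^*$ preserves all weak equivalences. In particular, $f^*$ takes each $p_i$ to a weak equivalence $\widetilde{X}_3 \times_Y V \xrightarrow{\sim} \widetilde{X}_i \times_Y V$.

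To finish, I would apply two-out-of-three once more to each triangle $U \to \widetilde{X}_3 \times_Y V \xrightarrow{\sim} \widetilde{X}_i \times_Y V$, concluding that $U \to \widetilde{X}_1 \times_Y V$ is a weak equivalence if and only if $U \to \widetilde{X}_3 \times_Y V$ is, if and only if $U \to \widetilde{X}_2 \times_Y V$ is. Thus the defining condition is independent of the chosen factorization. The main obstacle is the middle step: showing that pullback along a general morphism preserves weak equivalences between fibrations over $Y$. This reduces cleanly to invoking exactness of $f^*$ together with Ken Brown's lemma in the fibration-category setting, where the fibrancy hypothesis is automatic because all objects of $\cat{C} \fslice Y$ are fibrant.
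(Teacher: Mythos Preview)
Your proposal is correct and follows essentially the same approach as the paper: both arguments build a common refinement $\widetilde{X}_3$ by factoring the canonical map $X \to \widetilde{X}_1 \times_Y \widetilde{X}_2$, then invoke exactness of the pullback functor $f^*$ together with Ken Brown's lemma and two-out-of-three to compare the induced maps from $U$. A minor remark: since your $p_i \from \widetilde{X}_3 \to \widetilde{X}_i$ are in fact acyclic fibrations, exactness of $f^*$ already gives that $f^* p_i$ is a weak equivalence without needing Ken Brown, though invoking it does no harm.
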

   \begin{proof}
    Any two factorizations of $X \to Y$ can be connected by a zigzag of weak equivalences since, given a diagram,
    \[ \begin{tikzcd}
        & X \ar[ld, "\sim"'] \ar[rd, "\sim"] & \\
        \tilde{X} \ar[rd, two heads] && \tilde{X}' \ar[ld, two heads] \\
        & Y &
    \end{tikzcd} \]
    the induced map $X \to \widetilde{X} \times_Y \widetilde{X}'$ to the pullback may be factored into a weak equivalence followed by a fibration. Thus, it suffices to show that if two factorizations are connected by a weak equivalence
    \[ \begin{tikzcd}
        & X \ar[ld, "\sim"'] \ar[rd, "\sim"] & \\
        \tilde{X} \ar[rr, "\sim"] \ar[rd, two heads] && \tilde{X}' \ar[ld, two heads] \\
        & Y &
    \end{tikzcd} \]
    then the map $U \to \widetilde{X} \times_{Y} V$ is a weak equivalence if and only if the map $U \to \widetilde{X}' \times_{Y} V$ is. This follows by exactness of the pullback functor (\cref{lem:pb-is-exact}), Ken Brown's lemma, and two-out-of-three.
   \end{proof}
   
   \begin{lemma} \label{lem:hopb-independent-map}
    \cref{def:hopullback-1} does not depend on the choice of morphism for factorization. That is, a square
    \[ \begin{tikzcd}
        U \ar[r] \ar[d] & X \ar[d] \\
        V \ar[r] & Y
    \end{tikzcd} \] 
    is a homotopy pullback if and only if for any factorization $V \to \widetilde{V} \to Y$ of $V \to Y$ into a weak equivalence followed by a fibration, the induced map $U \to \widetilde{V} \times_Y X$ is a weak equivalence.
   \end{lemma}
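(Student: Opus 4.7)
The plan is to take a common refinement of the two factorizations involved in the two conditions. First I would fix factorizations $X \overset{\sim}{\to} \widetilde{X} \twoheadrightarrow Y$ (the type appearing in \cref{def:hopullback-1}) and $V \overset{\sim}{\to} \widetilde{V} \twoheadrightarrow Y$ (the type appearing in the alternative condition), and form the pullback $\widetilde{V} \times_Y \widetilde{X}$, which is a fibration over $Y$ with both projections being fibrations by stability. The universal property then provides a canonical map $U \to \widetilde{V} \times_Y \widetilde{X}$ compatible with the projections to $\widetilde{X}$ and $\widetilde{V}$.

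Next, I would relate $\widetilde{X} \times_Y V$ and $\widetilde{V} \times_Y X$ to this refinement. By the pullback pasting lemma, $\widetilde{V} \times_Y X \cong (\widetilde{V} \times_Y \widetilde{X}) \times_{\widetilde{X}} X$, so the induced map $\widetilde{V} \times_Y X \to \widetilde{V} \times_Y \widetilde{X}$ is the pullback of the weak equivalence $X \overset{\sim}{\to} \widetilde{X}$ along the fibration $\widetilde{V} \times_Y \widetilde{X} \twoheadrightarrow \widetilde{X}$. The pullback functor is exact by \cref{lem:pb-is-exact}, and since every object of a fibration category is fibrant, Ken Brown's lemma guarantees that this functor preserves all weak equivalences. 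Therefore $\widetilde{V} \times_Y X \overset{\sim}{\to} \widetilde{V} \times_Y \widetilde{X}$, and the symmetric argument (pulling back $V \overset{\sim}{\to} \widetilde{V}$ along $\widetilde{V} \times_Y \widetilde{X} \twoheadrightarrow \widetilde{V}$) yields $\widetilde{X} \times_Y V \overset{\sim}{\to} \widetilde{V} \times_Y \widetilde{X}$.

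The canonical map $U \to \widetilde{V} \times_Y \widetilde{X}$ factors both $U \to \widetilde{X} \times_Y V$ and $U \to \widetilde{V} \times_Y X$ through the weak equivalences just constructed, so by two-out-of-three the three maps are weak equivalences simultaneously. Combining this with \cref{lem:hopb-independent-factorization}, which identifies the first being a weak equivalence as the homotopy pullback condition (independent of the choice of $\widetilde{X}$), yields both directions of the stated iff. The independence of the right-hand condition from the choice of factorization $\widetilde{V}$ then drops out as a corollary, since being a homotopy pullback makes no reference to $\widetilde{V}$.

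The main obstacle I anticipate is the justification that pullback of a weak equivalence along a fibration is itself a weak equivalence. The cleanest route is via exactness of the pullback functor together with Ken Brown's lemma; the subtlety to watch for is using the fibration-category form of Ken Brown (rather than the model-categorical one) and observing that fibrancy of all objects in the relevant slice $\cat{C} \fslice \widetilde{X}$ is automatic, so the lemma applies to all weak equivalences without extra hypotheses.
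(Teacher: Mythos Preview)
Your proposal is correct and is essentially the same argument as the paper's: the paper builds the same grid with $\widetilde{V}\times_Y\widetilde{X}$ in the middle, identifies the two corner pullbacks as $\widetilde{X}\times_Y V$ and $\widetilde{V}\times_Y X$, marks the comparison maps into the center as weak equivalences (implicitly via \cref{lem:pb-is-exact} and Ken Brown, exactly as you note), and concludes by two-out-of-three. Your write-up is just a more detailed rendering of the paper's terse diagram-and-one-line proof.
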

   \begin{proof}
    It suffices to show that one of the dotted arrows in the diagram
    \[ \begin{tikzcd}
        U \ar[r, dotted] \ar[d, dotted] & \bullet \ar[r, two heads] \ar[d, "\sim"] \ar[rd, phantom, "\ulcorner" very near start] & X \ar[d, "\sim"] \\
        \bullet \ar[r, "\sim"] \ar[d, two heads] \ar[rd, phantom, "\ulcorner" very near start] & \bullet \ar[r, two heads] \ar[d, two heads] \ar[rd, phantom, "\ulcorner" very near start] & \bullet \ar[d, two heads] \\
        V \ar[r, "\sim"] & \bullet \ar[r, two heads] & Y
    \end{tikzcd} \]
    is a weak equivalence if and only if the other one is. This is an immediate consequence of two-out-of-three.
   \end{proof}
   
   \begin{lemma}\label{lem:invariance-under-weq}
    Given a commutative cube,
    \[ \begin{tikzcd}
        U_0 \ar[rr] \ar[rd, "\sim"] \ar[dd] && X_0 \ar[rd, "\sim"] \ar[dd] & {} \\
        {} & U_1 \ar[rr, crossing over] && X_1 \ar[dd] \\
        V_0 \ar[rd, "\sim"] \ar[rr] && Y_0 \ar[rd, "\sim"] & {} \\
        {} & V_1 \ar[rr] \ar[from=uu, crossing over] && Y_1
    \end{tikzcd} \]
    the front square is a homotopy pullback if and only if the back square is.
   \end{lemma}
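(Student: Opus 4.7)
The plan is to choose compatible fibrant replacements of $X_0 \to Y_0$ and $X_1 \to Y_1$, show that the two resulting strict pullbacks are connected by a weak equivalence, and then conclude by two-out-of-three.

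First I would factor $X_0 \to Y_0$ as $X_0 \weto \tilde{X}_0 \fto Y_0$, and then factor the composite $\tilde{X}_0 \to Y_0 \weto Y_1$ as $\tilde{X}_0 \weto \tilde{X}_1 \fto Y_1$. Two-out-of-three applied to $X_0 \weto X_1 \to \tilde{X}_1$, whose composite equals $X_0 \weto \tilde{X}_0 \weto \tilde{X}_1$, shows $X_1 \to \tilde{X}_1$ is a weak equivalence, so $X_1 \weto \tilde{X}_1 \fto Y_1$ is a valid factorization for computing the front homotopy pullback. Writing $\tilde{U}_i := V_i \times_{Y_i} \tilde{X}_i$, the definition of homotopy pullback (\cref{def:hopullback-1}) says the back (resp.\ front) square is a homotopy pullback iff $U_0 \to \tilde{U}_0$ (resp.\ $U_1 \to \tilde{U}_1$) is a weak equivalence.

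The key step is then to verify that the induced map $\tilde{U}_0 \to \tilde{U}_1$ is a weak equivalence: granting this, the commutative square with verticals $U_0 \weto U_1$ and $\tilde{U}_0 \weto \tilde{U}_1$ reduces the lemma to two-out-of-three. To establish the claim, I would factor $\tilde{U}_0 \to \tilde{U}_1$ as
\[
V_0 \times_{Y_0} \tilde{X}_0 \longrightarrow V_0 \times_{Y_1} \tilde{X}_1 \longrightarrow V_1 \times_{Y_1} \tilde{X}_1,
\]
where the middle pullback is formed along the composite $V_0 \to Y_0 \to Y_1$. The second map is the pullback of $V_0 \weto V_1$ along the fibration $\tilde{U}_1 \fto V_1$, hence a weak equivalence by right properness of fibration categories. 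For the first map, pullback pasting identifies $V_0 \times_{Y_1} \tilde{X}_1 \cong V_0 \times_{Y_0} (Y_0 \times_{Y_1} \tilde{X}_1)$, so it is the pullback along $V_0 \to Y_0$ of the canonical map $\tilde{X}_0 \to Y_0 \times_{Y_1} \tilde{X}_1$ between fibrations over $Y_0$. The latter is a weak equivalence by two-out-of-three, using that the projection $Y_0 \times_{Y_1} \tilde{X}_1 \weto \tilde{X}_1$ is the pullback of $Y_0 \weto Y_1$ along the fibration $\tilde{X}_1 \fto Y_1$ (again a weak equivalence by right properness). Since the pullback functor $\cat{C} \fslice Y_0 \to \cat{C} \fslice V_0$ along $V_0 \to Y_0$ is exact by \cref{lem:pb-is-exact}, Ken Brown's lemma shows it preserves weak equivalences, so the first map is a weak equivalence as well.

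The main obstacle will be the appeal to right properness of fibration categories, i.e.\ the fact that the pullback of a weak equivalence along a fibration is a weak equivalence. This is a standard theorem but is not proven elsewhere in the paper, so it should either be cited from a reference such as \cite{radulescu-banu} or recorded as a preliminary lemma; once available, the remainder of the argument is routine pullback pasting combined with two-out-of-three.
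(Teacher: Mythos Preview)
Your argument has a genuine gap in the first paragraph: you never construct the map $X_1 \to \tilde{X}_1$ that you then feed into two-out-of-three. Having factored $X_0 \to Y_0$ and then $\tilde{X}_0 \to Y_1$, you obtain maps $\tilde{X}_0 \weto \tilde{X}_1 \fto Y_1$, but there is no reason for $X_1$ to map to $\tilde{X}_1$ compatibly with everything in sight. The commutative square with sides $X_0 \weto X_1$, $X_1 \to Y_1$, $X_0 \to \tilde{X}_1$, $\tilde{X}_1 \fto Y_1$ does not admit a diagonal lift in a general fibration category, since $X_0 \weto X_1$ is merely a weak equivalence, not an acyclic cofibration. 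Without such a map you have no factorization $X_1 \weto \tilde{X}_1 \fto Y_1$, no induced map $U_1 \to \tilde{U}_1$ through that factorization, and no commutative square on which to run the final two-out-of-three.

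The fix is to reverse the order of factorizations, exactly as the paper does: first factor $X_1 \weto \tilde{X}_1 \fto Y_1$, form the pullback $\tilde{X}_1 \times_{Y_1} Y_0$, and then factor the induced map $X_0 \to \tilde{X}_1 \times_{Y_1} Y_0$ as $X_0 \weto \tilde{X}_0 \fto \tilde{X}_1 \times_{Y_1} Y_0$. Now a map $\tilde{X}_0 \to \tilde{X}_1$ comes for free and $\tilde{X}_0 \fto Y_0$ is a composite of fibrations; everything downstream in your argument then goes through. Your verification that $\tilde{U}_0 \to \tilde{U}_1$ is a weak equivalence, via right properness and Ken Brown applied to the exact pullback functor of \cref{lem:pb-is-exact}, is essentially an unwinding of the gluing lemma the paper invokes, so once the factorizations are set up correctly the two proofs coincide.
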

   \begin{proof}
    Let $X_1 \xrightarrow{\sim} \tilde{X_1} \twoheadrightarrow Y_1$ be any factorization of $X_1 \to Y_1$ and let $X_0 \xrightarrow{\sim} \tilde{X_0} \twoheadrightarrow \tilde{X_1} \times_{Y_1} Y_0$ be a factorization of the induced map $X_0 \to \tilde{X_1} \times_{Y_1} Y_0$. By \cref{lem:hopb-independent-factorization}, it suffices to show the map $U_0 \to V_0 \times_{Y_0} \tilde{X_0}$ in
    \[ \begin{tikzcd}
        U_0 \ar[rr] \ar[rd, "\sim"] \ar[dd] && X_0 \ar[rd, "\sim"] \ar[dd] & {} \\
        & U_1 \ar[rr, crossing over] && X_1 \ar[dd] \\
        V_0 \times_{Y_0} \tilde{X}_0 \ar[rd, "\sim"] \ar[rr] && \tilde{X}_0 \ar[rd, "\sim"] & {} \\
        & V_1 \times_{Y_1} \tilde{X}_1 \ar[rr] \ar[from=uu, crossing over] && \tilde{X}_1
    \end{tikzcd} \]
   is a weak equivalence if and only if the map $U_1 \to V_1 \times_{Y_1} \tilde{X_1}$ is. By the gluing lemma, the map $V_0 \times_{Y_0} \tilde{X_0} \to V_1 \times_{Y_1} \tilde{X_1}$ is a weak equivalence, hence the result follows by two-out-of-three.
   \end{proof}
   
   \begin{corollary}
    A square in a fibration category $\cat{C}$ is a homotopy pullback if and only if it can be connected by a zigzag of natural weak equivalences (in the category of commutative squares in $\cat{C}$) with a pullback along a fibration.
   \end{corollary}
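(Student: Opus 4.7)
The approach is to prove the two implications separately, invoking the invariance lemmas already established.

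For the forward direction, suppose the square $S$ with corners $U, X, V, Y$ is a homotopy pullback. I would factor the map $X \to Y$ as a weak equivalence $X \xrightarrow{\sim} \tilde{X}$ followed by a fibration $\tilde{X} \twoheadrightarrow Y$ and form the strict pullback $\tilde{X} \times_Y V$, producing a new square $S'$ which is by construction a pullback along a fibration. The universal property of the pullback yields a natural comparison $S \to S'$ whose four corner components are: the identities on $V$ and $Y$, the weak equivalence $X \xrightarrow{\sim} \tilde{X}$ from the factorization, and the canonical map $U \to \tilde{X} \times_Y V$, which is a weak equivalence by the very definition of homotopy pullback. This exhibits $S$ as connected to a pullback along a fibration via a zigzag of length one.

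For the backward direction, my plan is to combine \cref{lem:invariance-under-weq} with the observation that every pullback along a fibration is itself a homotopy pullback. The invariance lemma is a biconditional, so iterating it transports the homotopy pullback property along the given zigzag regardless of the direction of each constituent weak equivalence. To verify the observation, consider a pullback square $P = X \times_Y V$ with $X \twoheadrightarrow Y$ already a fibration, and apply \cref{lem:hopb-independent-factorization} to the trivial factorization $X \xrightarrow{=} X \twoheadrightarrow Y$ of the right leg: the induced comparison map $P \to X \times_Y V$ is the identity, hence a weak equivalence.

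I do not anticipate any serious obstacle, since each ingredient has already been built in the preceding lemmas. The main conceptual point is that \cref{def:hopullback-1} is tailored to make the forward direction essentially an unpacking of definitions, while \cref{lem:invariance-under-weq} reduces the backward direction to a single trivial verification for strict pullbacks along fibrations.
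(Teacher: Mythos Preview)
Your proposal is correct and follows essentially the same route as the paper's proof: the paper dismisses the forward direction as ``clear'' (which your argument with the factorization and the induced comparison map makes explicit) and deduces the backward direction from \cref{lem:invariance-under-weq}, which is exactly what you do after observing that a strict pullback along a fibration satisfies \cref{def:hopullback-1} via the trivial factorization.
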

   \begin{proof}
    Follows from \cref{lem:invariance-under-weq}.
   \end{proof}
   \begin{theorem} \label{thm:htpy_pb_lemma}
    Consider the following diagram of maps between fibrant objects
    \[ \begin{tikzcd}
        A \ar[r] \ar[d] & B \ar[r] \ar[d] & C \ar[d] \\
        D \ar[r] & E \ar[r] & F
    \end{tikzcd} \]
    where the right square is a homotopy pullback.
    We have that the left square is a homotopy pullback if and only if the outer rectangle is.
\end{theorem}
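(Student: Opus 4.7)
The plan is to use the independence of the homotopy pullback condition from the chosen factorizations (\cref{lem:hopb-independent-factorization,lem:hopb-independent-map}) to set things up so that both homotopy pullback conditions become comparisons to a common object, at which point two-out-of-three closes the argument. The key is to pick compatible factorizations of $C \to F$ and $B \to E$ that cooperate with strict pullback pasting.

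Concretely, I would first factor $C \to F$ as $C \xrightarrow{\sim} \widetilde{C} \twoheadrightarrow F$. By hypothesis, the right square being a homotopy pullback means the induced map $B \to E \times_F \widetilde{C}$ is a weak equivalence. Factoring this map as $B \xrightarrow{\sim} \widetilde{B} \twoheadrightarrow E \times_F \widetilde{C}$ and applying two-out-of-three shows that the fibration $\widetilde{B} \twoheadrightarrow E \times_F \widetilde{C}$ is in fact acyclic. Composing with the (pullback) fibration $E \times_F \widetilde{C} \twoheadrightarrow E$ yields a bona fide factorization $B \xrightarrow{\sim} \widetilde{B} \twoheadrightarrow E$, which by \cref{lem:hopb-independent-factorization} may be used to test whether the left square is a homotopy pullback.

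With these choices in hand, the left square is a homotopy pullback iff $A \to D \times_E \widetilde{B}$ is a weak equivalence, and the outer rectangle is iff $A \to D \times_F \widetilde{C}$ is. Strict pullback pasting identifies $D \times_F \widetilde{C}$ with $D \times_E (E \times_F \widetilde{C})$, and the canonical comparison $D \times_E \widetilde{B} \to D \times_E (E \times_F \widetilde{C})$ is the pullback of the acyclic fibration $\widetilde{B} \twoheadrightarrow E \times_F \widetilde{C}$ along $D \to E$ (well-defined because the former is a fibration), hence itself an acyclic fibration by stability of acyclic fibrations under pullback. Since the composite of $A \to D \times_E \widetilde{B}$ with this acyclic fibration is precisely the comparison $A \to D \times_F \widetilde{C}$, two-out-of-three finishes both directions simultaneously. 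The main point requiring care is the construction of $\widetilde{B}$: ensuring its map to $E$ really is a fibration and that the identifications of iterated pullbacks track correctly through the diagram; thereafter the proof is routine bookkeeping.
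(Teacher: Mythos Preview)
Your proposal is correct and follows essentially the same strategy as the paper: factor $C \to F$, use the right square's homotopy pullback property to produce a compatible factorization of $B \to E$, and reduce both conditions to a single weak-equivalence check via pullback pasting and two-out-of-three. The paper is slightly more direct: since $B \to E \times_F \widetilde{C}$ is already a weak equivalence and $E \times_F \widetilde{C} \to E$ is already a fibration, the composite $B \to E \times_F \widetilde{C} \to E$ is itself a valid factorization, so your extra object $\widetilde{B}$ and the accompanying acyclic-fibration argument are unnecessary---both homotopy pullback conditions reduce to the \emph{same} map $A \to D \times_F \widetilde{C}$ being a weak equivalence, with no two-out-of-three needed at the end.
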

\begin{proof}
    Factor $C \to F$ as a weak equivalence followed by a fibration.
    Successive pullbacks give the following diagram of maps.
    \[ \begin{tikzcd}
        A \ar[r] \ar[d] & B \ar[r] \ar[d] & C \ar[d, "\sim"] \\
        P' \ar[r] \ar[d] \ar[rd, phantom, "\ulcorner" very near start] & P \ar[r] \ar[d] \ar[rd, phantom, "\ulcorner" very near start] & \tilde{C} \ar[d, two heads] \\
        D \ar[r] & E \ar[r] & F
    \end{tikzcd} \]
    Observe the map $B \to P$ is a weak equivalence by \cref{lem:hopb-independent-factorization}.
    The map $P \to E$ is a fibration as it is a pullback of a fibration.
    Thus $B \to P \to E$ is a factorization of the map $B \to E$ as a weak equivalence followed by a fibration.
    Thus the left square and outer rectangles are each homotopy pullbacks if and only if the map $A \to P'$ is a weak equivalence.
\end{proof}

\subsection{Homotopy-commutative squares in a fibration category}
We now extend the notion of homotopy pullbacks to homotopy-commutative squares, under two additional assumptions on our ambient fibration category $\cat{C}$:
\begin{enumerate}
  \item it has a functorial choice of path object factorizations;
  \item the notions of homotopy and right homotopy coincide.
\end{enumerate}
These assumptions are not strictly required, however they are satisfied in many cases of interests, including the fibration categories of cubical and simplicial Kan complexes.
For instance, the path objects in cubical Kan complexes are given either by the functor $\lpath{\uvar}$ or equivalently, in light of \cref{rem:htpy_def}, by the functor $\rpath{\uvar}$.
The second assumption is satisfied for instance when all objects of $\cat{C}$ are cofibrant, i.e.~admit the left lifting property against maps that are both a fibration and a weak equivalence.

These assumptions make our arguments simpler.
This is an important gain, as the details of the general case, where path objects need to be replaced and homotopies require taking iterated cofibrant replacements, become unwieldy.
In the end, as evidenced by the preceding section, the language of fibration categories is well-equipped to handle strictly commuting squares, but it is limited when it comes to homotopy-commutative squares.
The latter are best handled in the language of $(\infty,1)$-categories, cf.~\cite{joyal:theory,lurie:htt}, which we choose not to invoke here.

\begin{definition}
    A \emph{homotopy-commutative square} is a (non-commutative) square of maps
    \[ \begin{tikzcd}
        A \ar[r, "f"] \ar[d, "h"'] & B \ar[d, "g"] \\
        C \ar[r, "k"'] & D
    \end{tikzcd} \]
    with a homotopy $H \from A \to \pob{D}$ from $gf$ to $kh$, denoted
    \[ \begin{tikzcd}
        A \ar[r, "f"] \ar[d, "h"'] & B \ar[d, ""{name=s, left}, "g"] \\
        C \ar[r, ""{name=t, above}, "k"'] & D \ar[from=s, to=t, bend right, "H"']
    \end{tikzcd} \]
\end{definition}
Recall that, for a co-span $(f \from A \to C, g \from B \to C)$, the \emph{double mapping path space} $P(f, g)$ is the pullback
\[ \begin{tikzcd}
    P(f, g) \ar[r] \ar[d] \ar[rd, phantom, "\ulcorner" very near start] & \pob{C} \ar[d, two heads] \\
    A \times B \ar[r, "f \times g"] & C \times C
\end{tikzcd} \]
of the path space fibration $\pob{C} \twoheadrightarrow C \times C$ along the map $f \times g \from A \times B \to C \times C$.
A homotopy-commutative square
\[ \begin{tikzcd}
    X \ar[r, "p_1"] \ar[d, "p_2"'] & A \ar[d, "f" ,""{name=s, left}] \\
    B \ar[r, "g"', ""{name=t, above}] & C \ar[from=s, to=t, bend right, "H"']
\end{tikzcd} \]
induces a commutative square,
\[ \begin{tikzcd}
    X \ar[r, "H"] \ar[d, "{(p_1, p_2)}"'] & \pob{C} \ar[d, two heads] \\
    A \times B \ar[r, "f \times g"] & C \times C
\end{tikzcd} \]
hence a canonical map $X \to P(f, g)$ into the double mapping path space.

\begin{definition} \label{def:weak-hopb}
    A homotopy-commutative square
    \[ \begin{tikzcd}
        X \ar[r] \ar[d] & A \ar[d, ""{name=s, left}, "f"] \\
        B \ar[r, ""{name=t, above}, "g"'] & C \ar[from=s, to=t, bend right, "H"']
    \end{tikzcd} \]
    is a \emph{homotopy pullback} if the canonical map $X \to P(f, g)$ is a weak equivalence.
\end{definition}

Note that at this point, we have introduced two a priori different definitions of homotopy pullbacks for commutative squares: \cref{def:hopullback-1} and \cref{def:weak-hopb} with the constant homotopy.
We now show that the two definitions agree.

\begin{theorem} \label{thm:hopb-def-equiv}
    A commutative square is a homotopy pullback as in \cref{def:hopullback-1} if and only if it is a homotopy pullback as in \cref{def:weak-hopb} (with the constant homotopy).
\end{theorem}

The proof will be preceded by a technical lemma.

\begin{lemma} \label{thm:weak-hopb-fib}
    Any pullback along two fibrations is a homotopy pullback (with the constant homotopy).
\end{lemma}
\begin{proof}
    Fix a pullback
    \[ \begin{tikzcd}
        P \ar[r, "p_1"] \ar[d, "p_2"'] \ar[rd, phantom, "\ulcorner" very near start] & A \ar[d, "f", two heads] \\
        B \ar[r, "g", two heads] & C
    \end{tikzcd} \]
    along fibrations $f \from A \twoheadrightarrow C$ and $g \from B \twoheadrightarrow C$.
    Post-composing the map $P \to C$ with the section of the path object $C \to \pob{C}$ gives the constant homotopy $P \to \pob{C}$ from $fp_1$ to $gp_2$.
    The outer square in
    \[ \begin{tikzcd}
        P \ar[r] \ar[d] & C \ar[d, "\sim"] \\
        P(f, g) \ar[r] \ar[d] \ar[rd, phantom, "\ulcorner" very near start] & \pob{C} \ar[d, two heads] \\
        A \times B \ar[r, "f \times g", two heads] & C \times C
    \end{tikzcd} \]
    is a pullback since $P$ is a pullback of $f$ and $g$.
    The bottom square is a pullback by definition, hence the top square is a pullback.
    As $f \times g$ is a fibration, the map $P(f, g) \to \pob{C}$ is a fibration.
    The map $C \to \pob{C}$ is a weak equivalence, hence the map $P \to P(f, g)$ is by right properness.
\end{proof}

\begin{proof}[Proof of \cref{thm:hopb-def-equiv}]
    We fix a commutative square and factor the right and bottom maps as weak equivalences followed by fibrations.
    \[ \begin{tikzcd}
        P \ar[rr, "p_1"] \ar[dd, "p_2"'] & {} & A \ar[d, "w", "\sim"'] \\
        {} & {} & A' \ar[d, two heads, "f'"] \\
        B \ar[r, "v", "\sim"'] & B' \ar[r, two heads, "g'"] & C
    \end{tikzcd} \]
    Taking the pullback of $f'$ along $g'$ gives the following diagram of maps.
    \[ \begin{tikzcd}
        P \ar[rr, "p_1"] \ar[dd, "p_2"'] \ar[rd, dotted] & {} & A \ar[d, "w", "\sim"'] \\
        {} & P' \ar[r, "p'_1"] \ar[d, "p'_2"'] \ar[rd, phantom, "\ulcorner" very near start] & A' \ar[d, two heads, "f'"] \\
        B \ar[r, "v", "\sim"'] & B' \ar[r, two heads, "g'"] & C
    \end{tikzcd} \]
    By right properness and two-out-of-three, $P$ is a homotopy pullback as in \cref{def:hopullback-1} if and only if the map $P \to P'$ is a weak equivalence.
    Commutativity of this diagram gives that the diagram
    \[ \begin{tikzcd}[column sep = small]
        P \ar[rrd, bend left] \ar[rdd, "{(wp_1, vp_2)}"', bend right] \ar[rd] & {} & {} & {} \\
        {} & P' \ar[r] \ar[d, "{(p'_1, p'_2)}"' near start] & C \ar[r] & \pob{C} \ar[d, two heads] \\
        {} & A' \times B' \ar[rr, "f' \times g'"] & {} & C \times C
    \end{tikzcd} \]
    commutes.
    This induces a commutative triangle
    \[ \begin{tikzcd}
        P \ar[rr] \ar[rd] & {} & P(f, g) \\
        {} & P' \ar[ur] & {}
    \end{tikzcd} \]
    where the right map is a weak equivalence by \cref{thm:weak-hopb-fib}.
    By two-out-of-three, the map $P \to P(f, g)$ is a weak equivalence if and only if $P \to P'$ is.
\end{proof}

It is often simpler to work with commutative squares than homotopy-commutative squares.
We establish some definitions and results which characterize when a homotopy-commutative square is a homotopy pullback using commutative squares.

Given a morphism $f \from A \to B$, recall the Factorization Lemma of \cite{brown} uses the double mapping path space $P(f, \id[B])$ to contruct a factorization of $f$ as a section $\FLsect{f} \from A \xrightarrow{\sim} P(f, \id[B])$ of an acyclic fibration followed by a fibration $\FLfib{f} \from P(f, \id[B]) \to B$.
One can equivalently use the double mapping path space $P(\id[B], f)$, yielding a weak equivalence $\FLsect{f} \from A \to P(\id[B], f)$ and a fibration $\FLfibr{f} \from P(\id[B], f) \to B$.

A co-span $(f \from A \to C, g \from B \to C)$ induces maps $g^* \from P(f, g) \to P(f, \id[C])$ and $f^* \from P(f, g) \to P(\id[C], g)$ via the squares,
\[ \begin{tikzcd}[sep = small]
    P(f, g) \ar[rrd, bend left] \ar[dd] \ar[rd, dotted, "{g^*}"] & {} & {} \\
    {} & P(f, \id[C]) \ar[r] \ar[dd] \ar[rdd, phantom, "\ulcorner" very near start] & \pob{C} \ar[dd, two heads] \\
    A \times B \ar[rd, "{\id[A] \times g}"'] & {} & {} \\
    {} & A \times C \ar[r, "{f \times \id[C]}"'] & C \times C
\end{tikzcd} \qquad \begin{tikzcd}[sep = small]
    P(f, g) \ar[rrd, bend left] \ar[dd] \ar[rd, dotted, "{f^*}"] & {} & {} \\
    {} & P(\id[C], g) \ar[r] \ar[dd] \ar[rdd, phantom, "\ulcorner" very near start] & \pob{C} \ar[dd, two heads] \\
    A \times B \ar[rd, "{f \times \id[B]}"'] & {} & {} \\
    {} & C \times B \ar[r, "{\id[C] \times g}"'] & C \times C
\end{tikzcd} \]
respectively.
\begin{proposition}
    Let $(f \from A \to C, g \from B \to C)$ be a co-span.
    \begin{enumerate}
        \item If $g$ is a weak equivalence then $g^* \from P(f, g) \to P(f, \id[C])$ is.
        \item If $f$ is a weak equivlanece then $f^* \from P(f, g) \to P(\id[C], g)$ is.
    \end{enumerate}
\end{proposition}
\begin{proof}
    By the two-pullback lemma, the squares
    \[ \begin{tikzcd}
        P(f, g) \ar[r, "{g^*}"] \ar[d] & P(f, \id[C]) \ar[d, two heads] \\
        A \times B \ar[r, "{\id[A] \times g}"] & A \times C
    \end{tikzcd} \qquad \begin{tikzcd}
        P(f, g) \ar[r, "{f^*}"] \ar[d] & P(\id[C], g) \ar[d, two heads] \\
        A \times B \ar[r, "{f \times \id[B]}"] & C \times B
    \end{tikzcd} \]
    are pullbacks.
    The result follows from right properness.
\end{proof}
For a co-span $(f \from A \to C, g \from B \to C)$ and a map $h \from C \to D$, define the map $h_* \from P(f, g) \to P(hf, hg)$ to be the map induced by the following diagram.
\[ \begin{tikzcd}[sep = small]
    P(f, g) \ar[rr] \ar[dd] \ar[rrdd, phantom, "\ulcorner" very near start] \ar[rd, dotted, "{h_*}"'] & {} & \pob{C} \ar[dd, two heads] \ar[rd, "h_*"] & {} \\
    {} & P(hf, hg) \ar[rr] \ar[dd] \ar[rrdd, phantom, "\ulcorner" very near start] & {} & \pob{D} \ar[dd, two heads] \\
    A \times B \ar[rr, "f \times g" near start] \ar[rd, "{\id[A \times B]}"'] & {} & C \times C \ar[rd, "h \times h"] & {} \\
    {} & A \times B \ar[rr, "hf \times hg"] & {} & D \times D
\end{tikzcd} \]
This construction gives a compatibility between factorizations in a fibration category and composition, which we express formally in the following proposition.
\begin{proposition} \label{thm:FL_comp}
    For $f \from X \to Y$ and $g \from Y \to Z$,
    \begin{enumerate}
        \item the diagrams
        \[ \begin{tikzcd}[sep = small]
            {} & {} & P(gf, \id[Z]) \ar[ddrr, two heads, "\FLfib{(gf)}"] & {} & {} \\
            {} & P(f, \id[Y]) \ar[dr, two heads, "\FLfib{f}"] \ar[ur, "g^*g_*"'] & {} & {} & {} \\
            X \ar[rr, "f"] \ar[ur, "\sim"'] \ar[uurr, "\sim", bend left] & {} & Y \ar[rr, "g"] & {} & Z
        \end{tikzcd} \qquad \begin{tikzcd}[sep = small]
            {} & {} & P(\id[Z], gf) \ar[ddrr, two heads, "\FLfibr{(gf)}"] & {} & {} \\
            {} & P(\id[Y], f) \ar[dr, two heads, "\FLfibr{f}"] \ar[ur, "g^*g_*"'] & {} & {} & {} \\
            X \ar[rr, "f"] \ar[ur, "\sim"'] \ar[uurr, "\sim", bend left] & {} & Y \ar[rr, "g"] & {} & Z
        \end{tikzcd} \]
        commute;
        \item the maps $g^*g_* \from P(f, \id[Y]) \to P(gf, \id[Z])$ and $g^*g_* \from P(\id[Y], f) \to P(\id[Y], gf)$ are weak equivalences.
    \end{enumerate}
\end{proposition}
\begin{proof}
    Item (1) follows from the construction of the maps $g^*$ and $g_*$. Item (2) follows by two-out-of-three.
\end{proof}
\begin{definition}
    For a homotopy-commutative square,
    \[ \begin{tikzcd}
        X \ar[r, "p_1"] \ar[d, "p_2"'] & A \ar[d, "f" ,""{name=s, left}] \\
        B \ar[r, "g"', ""{name=t, above}] & C \ar[from=s, to=t, bend right, "H"']
    \end{tikzcd} \]
    we define the maps $\rdrop{H} \from X \to P(f, \id[C])$ and $\ldrop{H} \from X \to P(\id[C], g)$ by post-composing the canonical map $X \to P(f, g)$ with the maps $g^* \from P(f, g) \to P(f, \id[C])$ and $f^* \from P(f, g) \to P(\id[C], g)$, respectively.
\end{definition}
\begin{theorem} \label{thm:hopb-equiv}
    A homotopy-commutative square
    \[ \begin{tikzcd}
        X \ar[r, "p_1"] \ar[d, "p_2"'] & A \ar[d, "f" ,""{name=s, left}] \\
        B \ar[r, "g"', ""{name=t, above}] & C \ar[from=s, to=t, bend right, "H"']
    \end{tikzcd} \]
    is a homotopy pullback if and only if any of the commutative squares
    \[ \begin{tikzcd}
        X \ar[r, "H"] \ar[d, "{(p_1, p_2)}"'] \ar[rd, phantom, "{(1)}" description] & \pob{C} \ar[d, two heads] \\
        A \times B \ar[r, "f \times g"'] & C \times C
    \end{tikzcd} \qquad \begin{tikzcd}
        X \ar[r, "\rdrop{H}"] \ar[d, "p_2"'] \ar[rd, phantom, "{(2)}" description] & P(f, \id[C]) \ar[d, two heads, "\FLfib{f}"] \\
        B \ar[r, "g"'] & C
    \end{tikzcd} \qquad \begin{tikzcd}
        X \ar[r, "p_1"] \ar[d, "\ldrop{H}"'] \ar[rd, phantom, "{(3)}" description] & A \ar[d, "f"] \\
        P(\id[C], g) \ar[r, two heads, "{\FLfibr{g}}"'] & C
    \end{tikzcd} \]
    is a homotopy pullback.
\end{theorem}
\begin{proof}
    Square (1) follows by \cref{thm:hopb-def-equiv,lem:hopb-independent-factorization}.
    We show square (2) is equivalent, as square (3) follows analogously.

    As $f = \FLfib{f}\FLsect{f}$, there is a canonical map $P(f, g) \to P(\FLfib{f}, g)$ which is a weak equivalence by the gluing lemma.
    \[ \begin{tikzcd}[sep = small]
        P(f, g) \ar[rr] \ar[dd] \ar[rd, dotted] \ar[rrdd, phantom, "\ulcorner" very near start] & {} & \pob{C} \ar[dd, two heads] \ar[rd, "{\id[\pob{C}]}", "\sim"'] & {} \\
        {} & P(\FLfib{f}, g) \ar[rr] \ar[dd] \ar[rrdd, phantom, "\ulcorner" very near start] & {} & \pob{C} \ar[dd, two heads] \\
        A \times B \ar[rr, "f \times g" near start] \ar[rd, "{\FLsect{f} \times \id[B]}"', "\sim"] & {} & C \times C \ar[rd, "{\id[C \times C]}", "\sim"'] & {} \\
        {} & P(f, \id[C]) \times B \ar[rr, "{\FLfib{f} \times g}"] & {} & C \times C
    \end{tikzcd} \]
    The result then follows by two-out-of-three.
\end{proof}

\subsection{Homotopy-commutative squares of cubical Kan complexes}
We consider the fibration category $\Kan$ of cubical Kan complexes, which in particular satisfies the assumptions of the previous subsection with the path object functor given by $\rpath{\uvar} \from \Kan \to \Kan$ and the double mapping path space defined analogously.
We now prove additional results about homotopy pullbacks, primarily making use of the Kan operation to describe composition of homotopies.

These results are true in greater generality --- in particular, it is possible to work directly with path objects in fibration categories to prove statements analogous to, e.g.~\cref{thm:htpy_two_pb}.
We choose not to do this, since, as indicated before, the language of fibration categories is somewhat ill-suited to work with homotopy-commutative squares.

\begin{theorem} \label{thm:hopb_htpy}
    Let
    \[ \begin{tikzcd}
        P \ar[r, "p_1"] \ar[d, "p_2"'] & A \ar[d, "f" ,""{name=s, left}] \\
        B \ar[r, "g"', ""{name=t, above}] & C \ar[from=s, to=t, bend right, "H"']
    \end{tikzcd} \qquad \begin{tikzcd}
        P \ar[r, "q_1"] \ar[d, "q_2"'] & A \ar[d, "f" ,""{name=s, left}] \\
        B \ar[r, "g"', ""{name=t, above}] & C \ar[from=s, to=t, bend right, "K"']
    \end{tikzcd} \]
    be homotopy-commutative squares.
    Given homotopies $\alpha \from p_1 \sim q_1$, $\beta \from p_2 \sim q_2$, and a map $\gamma \from P \to \rhom(\cube{2}, C)$ such that
    \[ \begin{array}{l l}
        \face{*}{1,0}\gamma  = H & \face{*}{1,1} \gamma = K, \\
        \face{*}{2,0}\gamma = f_* \alpha & \face{*}{2,1}\gamma = g_* \beta
    \end{array} \]
    the homotopy-commutative square
        \[ \begin{tikzcd}
            P \ar[r, "\alpha"] \ar[d, "\beta"'] & \rpath{A} \ar[d, "f_*" ,""{name=s, left}] \\
            \rpath{B} \ar[r, "g_*"', ""{name=t, above}] & \rpath{C} \ar[from=s, to=t, bend right, "\gamma"']
        \end{tikzcd} \]
    is a homotopy pullback if and only if the left or right square is.
    In particular, the left square is a homotopy pullback if and only if the right square is.
\end{theorem}
\begin{proof}
    As $\rpath{\uvar} \from \cSet \to \cSet$ is a right adjoint, the square
    \[ \begin{tikzcd}
        \rpath{P(f, g)} \ar[r] \ar[d] & \rhom(\cube{2}, C) \ar[d, "{(\face{*}{2,0}, \face{*}{2,1})}"] \\
        \rpath{A} \times \rpath{B} \ar[r, "{f_* \times g_*}"] & \rpath{C} \times \rpath{C}
    \end{tikzcd} \]
    is a pullback.
    By assumption, $\gamma$ and $(\alpha, \beta)$ induce a map into the pullback $P \to \rpath{P(f, g)}$.
    This then follows by two-out-of-three.
\end{proof}

\begin{definition}
    Let $f, g, h \from X \to Y$ be cubical maps and $H, K \from X \to \rpath{Y}$ be homotopies from $f$ to $g$ and from $g$ to $k$, respectively.
    \begin{enumerate}
        \item A \emph{concatenation square} for $H$ and $K$ is a map $\alpha \from X \to \rhom(\cube{2}, X)$ such that
        \[ \begin{array}{l l}
            \face{*}{1,0}\alpha = H & \face{*}{1,1}\alpha = \degen{*}{1}h \\
            {} & \face{*}{2,1}\alpha = K.
        \end{array} \qquad \begin{tikzcd}
            f \ar[r] \ar[d, "H"'] & h \ar[d, equal] \\
            g \ar[r, "K"] & h
        \end{tikzcd} \]
        \item A \emph{concatenation} of $H$ and $K$ is a map $L \from X \to \rpath{X}$ such that $L = \face{*}{2,0}\alpha$ for some concatenation square $\alpha$ for $H$ and $K$.
    \end{enumerate}
\end{definition}
Note that a concatenation always exists by transitivity of homotopy.

\begin{proposition} \label{thm:htpy_conc_unique}
    For maps $f, g, h \from X \to Y$ and homotopies $H \from f \sim g$ and $K \from g \sim k$, if $L, L' \from X \to \rpath{Y}$ are concatenations for $H$ and $K$ then there is a homotopy $\eta \from X \to \rhom(\cube{2}, Y)$ from $L$ to $L'$ such that
    \[ \face{*}{2,0}\eta = \degen{*}{1}h, \quad \face{*}{2,1}\eta = \degen{*}{1}k. \]
\end{proposition}
\begin{proof}
    Identifying homotopies of the form $X \to \rpath{Y}$ with homotopies of the form $X \gprod \cube{1} \to Y$, this follows from \cref{thm:htpy_obox_general}.
\end{proof}
If $H \from X \to \rpath{Y}$ is a homotopy from a map $f \from X \to Y$ to $g \from X \to Y$ then the maps $\conn{*}{1,0}H, \degen{*}{2}H \from X \to \rhom(\cube{2}, Y)$ exhibit $H$ as both the concatenation of $H$ with $\degen{*}{1}f$ and of $\degen{*}{1}g$ with $H$, respectively.

Recall the cubical set $\Q$ is given by the following pushout.
\[ \begin{tikzcd}
    \cube{1} \ar[r, "{\face{}{1,1}}"] \ar[d] \ar[rd, phantom, "\ulcorner" very near end] & \cube{2} \ar[d] \\
    \cube{0} \ar[r] & \Q
\end{tikzcd} \]
We write $\twosp$ for the 2-spine, i.e.~the cubical set obtained by the following pushout.
\[ \begin{tikzcd}[column sep = large]
    \cube{0} \sqcup \cube{0} \ar[r, "{\face{}{1,1} \sqcup \face{}{1,0}}"] \ar[d] \ar[rd, phantom, "\ulcorner" very near end] & \cube{1} \sqcup \cube{1} \ar[d] \\
    \cube{0} \ar[r] & \twosp
\end{tikzcd} \]
By left properness, both $\Q$ and $\twosp$ are contractible, thus the maps 
\[ [\face{}{1,0}, \face{}{2,1}] \from \twosp \to \Q, \quad \face{}{2,0} \from \cube{1} \to \Q \] 
are acyclic cofibrations.
For any Kan complex $X$, the pre-composition maps 
\[ [\face{}{1,0}, \face{}{2,1}]^* \from \rhom(\Q, X) \to \rhom(\twosp, X), \quad \face{*}{2,0} \from \rhom(\Q, X) \to \rhom(\cube{1}, X) \]
are acyclic fibrations by \cref{thm:pp_anodyne}.
In particular, the map $[\face{}{1,0}, \face{}{2,1}]^* \from \rhom(\Q, X) \to \rhom(\twosp, X)$ has a section $\rhom(\twosp, X) \to \rhom(\Q, X)$.
A choice of section $\rhom(\twosp, X) \to \rhom(\Q, X)$ corresponds to a choice of concatenation for every (compatible) pair of homotopies in $X$.
For a chosen section, the composite
\[ \begin{tikzcd}[column sep = large]
    \rhom(\twosp, X) \ar[r, "{[\face{}{1,0}, \face{}{2,1}]^*}", "\sim"', hook] & \rhom(\Q, X) \ar[r, two heads, "\sim"', "{\face{*}{2,0}}"] & \rpath{X}
\end{tikzcd} \]
returns the chosen concatenation for every (compatible) pair of homotopies in $X$.
This map is always a homotopy equivalence.

\begin{lemma} \label{thm:conc_pb}
    Let
    \[ \begin{tikzcd}
        X \ar[r, "p_1"] \ar[d, "p_2"'] & A \ar[d, "f" ,""{name=s, left}] \\
        B \ar[r, "g"', ""{name=t, above}] & C
    \end{tikzcd} \]
    be a non-commutative square.
    For a map $h \from X \to C$ with homotopies $H \from fp_1 \sim h$ and $K \from h \sim gp_2$, the following are equivalent:
    \begin{enumerate}
        \item for any concatenation $L$ of $H$ and $K$, the square
        \[ \begin{tikzcd}
            X \ar[r, "p_1"] \ar[d, "p_2"'] & A \ar[d, "f" ,""{name=s, left}] \\
            B \ar[r, "g"', ""{name=t, above}] & C \ar[from=s, to=t, bend right, "L"']
        \end{tikzcd} \]
        is a homotopy pullback 
        \item there exists a concatenation $L$ of $H$ and $K$ such that the square
        \[ \begin{tikzcd}
            X \ar[r, "p_1"] \ar[d, "p_2"'] & A \ar[d, "f" ,""{name=s, left}] \\
            B \ar[r, "g"', ""{name=t, above}] & C \ar[from=s, to=t, bend right, "L"']
        \end{tikzcd} \]
        is a homotopy pullback
        \item the map $X \to P(f, \id[C]) \times_{C} P(\id[C], g)$ induced by $\rdrop{H} \from X \to P(f, \id[C])$ and $\ldrop{K} \from X \to P(\id[C], g)$ via the square
        \[ \begin{tikzcd}
            X \ar[rrd, bend left, "{\rdrop{H}}"] \ar[rdd, bend right, "\ldrop{K}"'] \ar[rd, dotted] & {} & {} \\
            {} & {P(f, \id[C]) \times_C P(\id[C], g)} \ar[r] \ar[d] \ar[rd, phantom, "\ulcorner" very near start] & {P(f, \id[C])} \ar[d, two heads, "\FLfib{f}"] \\
            {} & {P(\id[C], g)} \ar[r, two heads, "\FLfibr{g}"] & C
        \end{tikzcd} \]
        is a weak equivalence.
    \end{enumerate}
\end{lemma}
\begin{proof}
    Equivalence of (1) and (2) follows from \cref{thm:hopb_htpy,thm:htpy_conc_unique}.
    
    We have that the square
    \[ \begin{tikzcd}
        P(f, \id[C]) \times_C P(\id[C], g) \ar[d, two heads] \ar[r] \ar[rd, phantom, "\ulcorner" very near start] & \rhom(\twosp, C) \ar[d, two heads] \\
        A \times B \ar[r] & C \times C
    \end{tikzcd} \]
    is a pullback.
    A concatenation map $\rhom(\twosp, C) \to \rpath{C}$ induces a weak equivalence 
    \[ P(f, \id[C]) \times_C P(\id[C], g) \xrightarrow{\sim} P(f, g)\] 
    by the gluing lemma.
    \[ \begin{tikzcd}
        P(f, \id[C]) \times_C P(\id[C], g) \ar[rr] \ar[dd] \ar[rd, dotted, "\sim"] & {} & \rhom(\twosp, C) \ar[dd, two heads] \ar[rd, "\sim"] & {} \\
        {} & P(f, g) \ar[rr] \ar[dd] & {} & \rpath{C} \ar[dd, two heads] \\
        A \times B \ar[rr] \ar[rd, "{\id[A \times B]}"'] & {} & C \times C \ar[rd, "{\id[C \times C]}"] & {} \\
        {} & A \times B \ar[rr] & {} & C \times C
    \end{tikzcd} \]
    The composite $X \to P(f, \id[C]) \times_C P(\id[C], g) \to P(f, g)$ is the canonical map for some concatenation of $H$ and $K$.
    If $X \to P(f, \id[C]) \times_C P(\id[C], g)$ is a weak equivalence then this composite is, thus (3) implies (2).
    By two-out-of-three, we have that (1) implies (3).
\end{proof}
In light of \cref{thm:conc_pb}, we say \emph{the} concatenation of two homotopies is a homotopy pullback if any of the equivalent conditions are satisfied.
\begin{theorem} \label{thm:htpy_two_pb}
    Suppose the right square in
    \[ \begin{tikzcd}
        A \ar[r, "f"] \ar[d, "\alpha"'] & B \ar[r, "g"] \ar[d, "\beta", ""{left, name=l1}] & C \ar[d, "\gamma", ""{left, name=r1}] \\
        A' \ar[r, "f'"', ""{left, name=l2}] & B' \ar[r, "g'"', ""{left, name=r2}] & C' \ar[from=l1, to=l2, bend right, "G"'] \ar[from=r1, to=r2, bend right, "H"']
    \end{tikzcd} \]
    is a homotopy pullback.
    Then, the left square is a homotopy pullback if and only if the composite of $g'_*G$ with $Hf$ is.
\end{theorem}
\begin{proof}
    By \cref{thm:hopb-equiv}, the left square is a homotopy pullback if and only if square (1) in
    \[ \begin{tikzcd}
        A \ar[r, "f"] \ar[d, "{\ldrop{G}}"'] \ar[rd, phantom, "(1)" description] & B \ar[r, "{\rdrop{H}}"] \ar[d, "\beta"] \ar[rd, "(2)" description, phantom] & P(\gamma, \id[C']) \ar[d, "\FLfib{\gamma}", two heads] \\
        P(\id[B'], f') \ar[r, "\FLfibr{f'}"] \ar[d, "g'^*g'_*"', "\sim"] & B' \ar[r, "g'"] \ar[d, phantom, "(3)" description] & C' \ar[d, "{\id[C']}", two heads] \\
        P(\id[C'], g'f') \ar[rr, "\FLfibr{(g'f')}"', two heads] & {} & C'
    \end{tikzcd} \]
    is.
    By \cref{thm:htpy_pb_lemma}, this is equivalent to the composite square (1) and (2) being a homotopy pullback (since square (2) is a homotopy pullback by \cref{thm:hopb-equiv}).
    Square (3) is a homotopy pullback since $g'^*g'_*$ is a weak equivalence (by \cref{thm:FL_comp}). 
    Again, by \cref{thm:htpy_pb_lemma}, the composite square (1) and (2) is a homotopy pullback if and only if the outermost square is a homotopy pullback. 
    This is equivalent to the concatenation being a homotopy pullback by \cref{thm:FL_comp,thm:conc_pb}.
\end{proof}

\end{appendices}

\bibliographystyle{amsalphaurlmod}
\bibliography{all-refs.bib}

\end{document}